\documentclass[a4paper,10pt]{article}
\usepackage{graphicx}

\usepackage{amsmath}
\usepackage{amssymb}
\usepackage{amsfonts}
\usepackage{amsthm}
\usepackage{mathtools}
\usepackage[binary-units=true]{siunitx}
\usepackage{booktabs}
\usepackage[affil-sl]{authblk}

\usepackage{dsfont}
\usepackage{url}
\usepackage{enumerate}
\usepackage[inline]{enumitem}
\usepackage{cite}

\usepackage{xspace}
\usepackage{tikz}
\usepackage{subfigure}
\usepackage{longtable}

\usepackage[margin=3cm]{geometry}

\definecolor{darkred}{RGB}{150, 0, 0}
\definecolor{darkgreen}{RGB}{0, 150, 0}
\definecolor{darkblue}{RGB}{0, 0, 150}
\usepackage[breaklinks=true,colorlinks,citecolor=darkgreen,linkcolor=darkred,urlcolor=darkblue,bookmarks=false]{hyperref}

\usepackage{array}
\newcolumntype{L}[1]{>{\raggedright\let\newline\\\arraybackslash\hspace{0pt}}m{#1}}
\newcolumntype{R}[1]{>{\raggedleft\let\newline\\\arraybackslash\hspace{0pt}}m{#1}}

% sets of numbers
\newcommand{\R}{\mathds{R}}
\newcommand{\Z}{\mathds{Z}}
\newcommand{\B}[1]{\{0,1\}^{#1}}

% math letters
\newcommand{\cI}{\mathcal{I}}
\newcommand{\cC}{\mathcal{C}}

% groups
\newcommand{\sym}[1]{\mathcal{S}_{#1}}
\newcommand{\ssym}[1]{\sym{n}^{\pm}}
\newcommand{\perm}{\pi}
\newcommand{\sperm}{\gamma}
\newcommand{\inv}[1]{{#1}^{-1}}
\newcommand{\invperm}{\inv{\perm}}
\newcommand{\invsperm}{\inv{\sperm}}
\newcommand{\dcenter}{\xi}
\newcommand{\refl}{\rho}
\newcommand{\sgroup}{\Gamma}
\newcommand{\id}{\mathrm{id}}

% detection
\newcommand{\type}{t}
\newcommand{\colors}{\mathfrak{C}}
\newcommand{\values}{\mathfrak{N}}
\newcommand{\variables}{\mathfrak{V}}
\newcommand{\operators}{\mathfrak{O}}
\newcommand{\anchor}{\mathfrak{a}}
\newcommand{\sqdiff}{\boxminus}

% comparison
\newcommand{\lexgeq}{\geq_{\mathrm{lex}}}
\newcommand{\lexgt}{>_{\mathrm{lex}}}
\newcommand{\nlexgeq}{\ngeq_{\mathrm{lex}}}

% general math
\newcommand{\define}{\coloneqq}
\newcommand{\T}{^{\top}}
\newcommand{\sprod}[2]{{#1}\T{#2}}
\DeclareMathOperator{\sign}{sgn}
\newcommand{\abs}[1]{\lvert #1 \rvert}

% abbreviations
\newcommand{\SDG}{SDG\xspace}
\newcommand{\SDGs}{SDGs\xspace}

% text layout
\newcommand{\solver}[1]{\textsc{#1}\xspace}
\newcommand{\scip}{\solver{SCIP}}
\newcommand{\code}[1]{\texttt{#1}}

% Theorem environments. Number them all the same.
\theoremstyle{plain}
\newtheorem{theorem}{Theorem}[section]
\newtheorem{lemma}[theorem]{Lemma}
\newtheorem{proposition}[theorem]{Proposition}

\newtheorem*{claim*}{Claim}

\theoremstyle{definition}
\newtheorem{remark}[theorem]{Remark}
\newtheorem{definition}[theorem]{Definition}
\newtheorem{example}[theorem]{Example}

\begin{document}

\title{Detecting and Handling Reflection Symmetries in Mixed-Integer
  (Nonlinear) Programming}

\author[1]{Christopher Hojny}
\affil[1]{%
  Eindhoven University of Technology\\
  Combinatorial Optimization Group\\
  PO~Box~513\\
  5600 MB Eindhoven, The Netherlands\\
  \emph{email} c.hojny@tue.nl
}

\maketitle

\begin{abstract}
  Symmetries in mixed-integer (nonlinear) programs (MINLP),
  if not handled appropriately, are known to negatively impact the
  performance of (spatial) branch-and-bound algorithms.
  Usually one thus tries to remove symmetries from the problem formulation
  or is relying on a solver that automatically detects and handles
  symmetries.
  While modelers of a problem can handle various kinds of symmetries,
  automatic symmetry detection and handling is mostly restricted to
  permutation symmetries.
  This article therefore develops techniques such that also
  black-box solvers can automatically detect and handle a broader class
  of symmetries.

  Inspired from geometric packing problems such as the kissing number
  problem, we focus on reflection symmetries of MINLPs.
  We develop a generic and easily applicable framework that allows to automatically detect reflection
  symmetries for MINLPs.
  To handle this broader class of symmetries, we discuss
  generalizations of state-of-the-art methods for permutation symmetries,
  and develop dedicated symmetry handling methods for special reflection
  symmetry groups.
  Our symmetry detection framework has been implemented in the open-source
  solver \scip and we provide a comprehensive discussion of the
  implementation.
  The article concludes with a detailed numerical evaluation of our
  symmetry handling methods when solving MINLPs.

  \textbf{Keywords}
  mixed-integer nonlinear programming $\bullet$ permutation symmetry
    $\bullet$ reflection symmetry $\bullet$ automatic symmetry detection $\bullet$
    automatic symmetry handling
\end{abstract}

\section{Introduction}

We consider mixed-integer nonlinear programs (MINLP) of the form
\begin{align}
  \tag{MINLP}\label{eq:MINLP}
  \begin{aligned}
    \min \sprod{c}{x} &&&\\
    g_k(x) &\leq 0, && k \in \{1,\dots,m\},\\
    \ell_i \leq x_i &\leq u_i, && i \in \{1,\dots,n\},\\
    x_i &\in \Z, && i \in \cI,    
  \end{aligned}
\end{align}
where~$m$ and~$n$ are positive integers, $c \in \R^n$, $\cI \subseteq
\{1,\dots,n\}$, and we have, for every~$k \in \{1,\dots,m\}$,
that~$g_k\colon\R^n \to \R$  as well as, for each~$i \in \{1,\dots,n\}$,
that~$\ell_i \in \R \cup \{-\infty\}$ and~$u_i \in \R \cup \{+\infty\}$.
Note that a linear objective~$\sprod{c}{x}$ is without loss of generality as a
nonlinear objective~$f\colon \R^n \to \R$ can be replaced by a single
auxiliary variable~$\alpha$ and an additional constraint~$f(x) - \alpha
\leq 0$.

A major technique for solving MINLPs is spatial
branch-and-bound~\cite{BelottiEtAl2013}.
In a nutshell, the idea is to define a relaxation of~\eqref{eq:MINLP}, which
is easier to solve than the original problem.
This relaxation is then iteratively split into smaller subproblems until an optimal solution
of~\eqref{eq:MINLP} is found or the problem is proven to be infeasible.
But if~\eqref{eq:MINLP} admits symmetries (which will be defined
properly below), also the list of subproblems will arguably
contain symmetric problems.
Branch-and-bound can thus be accelerated by removing
symmetric copies from the list.
This observation has led to various symmetry handling techniques~\cite{BendottiEtAl2021,Bodi2013,DoornmalenHojny2023,DoornmalenHojny2024,Friedman2007,Hojny2020,HojnyPfetsch2019,KaibelEtAl2011,KaibelPfetsch2008,Liberti2008,Liberti2012,Liberti2012a,LibertiOstrowski2014,LinderothEtAl2021,Margot2002,Margot2003,Ostrowski2009,OstrowskiEtAl2011,Salvagnin2018}.
Most of these approaches consider only permutation symmetries,
which roughly speaking reorder entries of a solution vector and seem to be
most relevant for linear problems.
In many MINLPs, however, richer classes of symmetries can arise.

This article's aim is to extend the literature by tools for
reflection symmetries.
We believe that this is an important class of symmetries arising in many
MINLPs, as illustrated in Example~\ref{ex:geompack} below.
Such symmetries have, to the best of our knowledge, mainly been
handled by reformulating models for specific applications.
Our goal is to devise algorithms such that a black-box solver can
automatically detect and handle reflection symmetries.
We particularly aim for methods that handle more symmetries than
model reformulations.
\begin{example}
  \label{ex:geompack}
  Let~$D$ be a positive integer and let be~$B = [-\frac{W}{2},\frac{W}{2}] \times [-\frac{H}{2},\frac{H}{2}]$ be a
  rectangular box of width~$W \geq 0$ and height~$H \geq 0$.
  Consider the problem of finding the largest value~$r$ such
  that~$D$ non-overlapping disks of radius~$r$ can be packed in~$B$.
  This problem can be modeled as an MINLP, cf.~\cite{Khajavirad2017,Szabo2005}:
  \begin{align*}
    \max r &&&\\
    (x_i - x_j)^2 + (y_i - y_j)^2 &\geq 4r^2, && 1 \leq i < j \leq D,\\
    -\tfrac{W}{2} + r \leq x_i &\leq \tfrac{W}{2} - r, && i \in \{1,\dots,D\},\\
    -\tfrac{H}{2} + r \leq y_i &\leq \tfrac{H}{2} - r, && i \in \{1,\dots,D\},\\
  \end{align*}
  where~$(x_i,y_i)$ models the center point of disk~$i \in \{1,\dots,D\}$.

  This problem admits two kinds of symmetries, see
  Figure~\ref{fig:geompack} for an illustration.
  First, all disks are equivalent, i.e., for any permutation~$\pi$
  of~$\{1,\dots,D\}$, replacing~$(x_i,y_i)$ by~$(x_{\pi(i)}, y_{\pi(i)})$
  in a solution of the MINLP leads to an equivalent solution.
  Second, every solution can be reflected along the reflection symmetry
  axes of the box~$B$.
  In formulae, this means one can replace~$(x_i,y_i)$ by~$(-x_i, y_i)$
  for all~$i \in \{1,\dots,D\}$ (and similarly for horizontal reflections) to
  obtain an equivalent solution.
\end{example}
\begin{figure}[t]
  \centering
  \subfigure[An initial solution.]{
    \begin{tikzpicture}[scale=0.7]
      \draw[-,thick] (0,0) -- (5,0) -- (5,3) -- (0,3) -- (0,0);
      \draw[-,fill=red,draw=red] (.95,.95) circle [radius=.9cm];
      \draw[-,fill=blue,draw=blue] (2.4,2.05) circle [radius=.9cm];
      \draw[-,fill=purple,draw=purple] (4.05,.95) circle [radius=.9cm];
    \end{tikzpicture}
  }
  \subfigure[Permutation of colors.\label{fig:geompackB}]{
    \begin{tikzpicture}[scale=0.7]
      \draw[-,thick] (0,0) -- (5,0) -- (5,3) -- (0,3) -- (0,0);
      \draw[-,fill=blue,draw=blue] (.95,.95) circle [radius=.9cm];
      \draw[-,fill=purple,draw=purple] (2.4,2.05) circle [radius=.9cm];
      \draw[-,fill=red,draw=red] (4.05,.95) circle [radius=.9cm];
    \end{tikzpicture}
  }
  \subfigure[Reflection along a symmetry axis (dashed line).\label{fig:geompackC}]{
    \begin{tikzpicture}[scale=0.7]
      \draw[-,thick] (0,0) -- (5,0) -- (5,3) -- (0,3) -- (0,0);
      \draw[-,fill=purple,draw=purple] (.95,.95) circle [radius=.9cm];
      \draw[-,fill=blue,draw=blue] (2.6,2.05) circle [radius=.9cm];
      \draw[-,fill=red,draw=red] (4.05,.95) circle [radius=.9cm];
      \draw[-,dashed] (2.5,0) -- (2.5,1.2);
      \draw[-,dashed,draw=white] (2.5,1.2) -- (2.5,3);
    \end{tikzpicture}
  }
  \caption{Illustration of permutation and reflection symmetries.
    The different disks are indicated by colors.
  }
  \label{fig:geompack}
\end{figure}
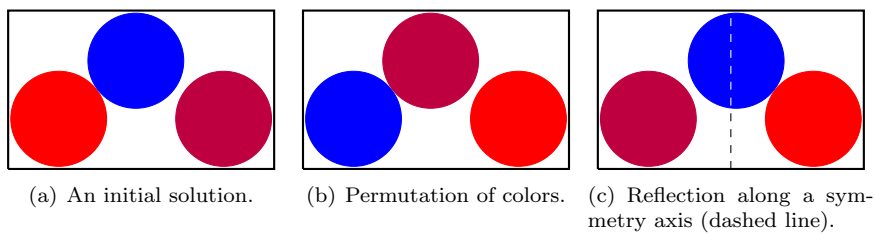
The first class of symmetries corresponds to permutation symmetries, for
which detection mechanisms are described in~\cite{Liberti2012a,Salvagnin2005}.
Except for linear problems~\cite{Bodi2013}, however, to the best of our knowledge
no general mechanism for detecting reflection symmetries in MINLP has been
described.
As a consequence, also the literature on methods for handling reflection
symmetries in general purpose MINLP solvers is limited.
The goals of this article therefore are to
\begin{enumerate}[label={{(G\arabic*)}},ref={G\arabic*},leftmargin={1cm}]
\item\label{goal1} provide a framework for detecting both permutation and
  reflection symmetries in MINLP;
\item\label{goal3} develop an easily extendable open-source tool for
  detecting permutation and reflection symmetries;
\item\label{goal2} derive symmetry handling methods for reflection
  symmetries;
\item\label{goal4} evaluate how frequently reflection symmetries arise in
  MINLP and by how much our methods accelerate the solving process.
\end{enumerate}
The main focus will be on achieving Goal~\eqref{goal3}.
In contrast to existing symmetry detection frameworks, \eqref{goal3}
requires a flexible mechanism to encode symmetries of an MINLP that can easily
incorporate custom constraints whose logic is not known to an MINLP solver.
We therefore need to develop an abstract symmetry detection framework for
achieving both~\eqref{goal1} and~\eqref{goal3}.

To achieve our goals, we proceed as follows.
A detailed description of the problem and underlying mathematical concepts
is provided in Section~\ref{sec:problem}.
To achieve~\eqref{goal1}, we provide a detailed description of
existing symmetry detection schemes in Section~\ref{sec:existingdetection}.
A common abstraction of these schemes is provided in
Section~\ref{sec:detection}, and
Sections~\ref{sec:basicdetection} and~\ref{sec:enhanceddetection}
describe two instantiations of our abstract framework to detect reflection
symmetries in MINLP.
Turning to Goal~\eqref{goal3}, Section~\ref{sec:implementation} describes
an implementation of our abstract framework in the open-source MINLP solver
\scip~\cite{bolusani2024scip}.
Our implementation is included in~\scip since version~9.0, and thus,
publicly available.
Section~\ref{sec:handling} provides an overview of existing symmetry
handling methods.
We also describe how these methods can be adapted for reflection symmetries,
cf.~\eqref{goal2}.
We conclude the article in Section~\ref{sec:numerics} with a detailed
analysis of numerical experiments to evaluate the impact of handling
reflection symmetries in MINLP, cf.~\eqref{goal4}.
For a literature review of symmetry detection and handling, we refer the
reader to Section~\ref{sec:existingdetection} and~\ref{sec:handling},
respectively.

\section{Problem Statement and Basic Definitions}
\label{sec:problem}

The arguably most general symmetry of~\eqref{eq:MINLP} is a
map~$\sigma\colon \R^n \to \R^n$ such that, for all~$x \in \R^n$, one has
\begin{enumerate*}[label={{(S\arabic*)}},ref={S\arabic*}]
\item\label{symdef1} $\sigma(x)$ is feasible for~\eqref{eq:MINLP} if and
  only if~$x$ is feasible, and
\item\label{symdef2} $\sprod{c}{\sigma(x)} = \sprod{c}{x}$.
\end{enumerate*}
Towards the detection of symmetries, however, this definition might be too
general as it is unclear how a general detection mechanism could look like.
One therefore usually restricts to subclasses of symmetries, the most
popular class being permutation symmetries.

Let~$n$ be a positive integer and define~$[n] \define \{1,\dots,n\}$.
Any bijective map~$\perm\colon [n] \to [n]$ is called a \emph{permutation}
of~$[n]$.
The set of all permutations of~$[n]$ is denoted~$\sym{n}$ and forms a
group w.r.t.\ composition, the \emph{symmetric group}.
A permutation~$\perm \in \sym{n}$ naturally acts on~$\R^n$ as~$\perm(x) =
(x_{\invperm(1)}, \dots, x_{\invperm(n)})$, i.e., $\perm$ permutes the
coordinates of a vector~$x \in \R^n$.
Note that the inverses are necessary to ensure that this indeed defines a
group action on~$\R^n$.
We say that~$\pi \in \sym{n}$ is a \emph{permutation symmetry}
of~\eqref{eq:MINLP} if it satisfies~\eqref{symdef1} and~\eqref{symdef2}.
The group consisting of all permutation symmetries of~\eqref{eq:MINLP} is
called its \emph{permutation symmetry group}.

Despite being the most popular class of symmetries in the mixed-integer
(nonlinear) programming literature, computing all permutation symmetries
is NP-hard already for binary linear programs~\cite{Margot2010}.
For general MINLP, it is even undecidable if~$\pi$ defines a permutation
symmetry, cf.~\cite{Liberti2012a}.
For integer programs, the reason is that symmetries are defined based on
the feasible region of~\eqref{eq:MINLP}, which is not known explicitly, and
for MINLP the difficulty arises that determining feasibility of a set of
linear equations is undecidable~\cite{Zhu2006}.
A possible remedy is thus to restrict to permutation symmetries that keep a
particular formulation of~\eqref{eq:MINLP} invariant.
We discuss this approach in more detail in
Section~\ref{sec:existingdetection} and turn to the definition of
reflection symmetries next.

The basis for our definition of reflection symmetries are signed
permutations.
We call a map $\sperm\colon \{\pm 1,\dots,\pm n\} \to \{\pm 1,\dots,\pm
n\}$ \emph{signed permutation} if it is bijective and
satisfies~$\sperm(-i) = -\sperm(i)$ for every~$i \in \{\pm 1,\dots, \pm
n\}$.
Signed permutations form a group under composition, the \emph{signed symmetric
  group}, denoted~$\ssym{n}$.
Analogously to permutations, we define a group action on~$\R^n$ via
\[
  \sperm(x)
  =
  \left(\sign(\invsperm(1))x_{\abs{\invsperm(1)}},\dots,\sign(\invsperm(n))x_{\abs{\invsperm(n)}}\right),
\]
where~$\sign\colon \R \to \{0, \pm 1\}$ denotes the sign operator.
That is, due to taking the absolute values in the indices, $\sperm$ reorders
the entries of~$x$ like a permutation, but can also change the sign of some
entries.
A signed permutation thus reorders the entries of a vector and reflects it
along some of the standard hyperplanes with normal vectors being the
standard unit vectors.
\begin{example}[Example~\ref{ex:geompack} continued]
  \label{ex:geompack2}
  Consider the situation of Example~\ref{ex:geompack} with three disks as
  illustrated in Figure~\ref{fig:geompack}.
  Collect all variables in a common vector~$z =
  (x_1,x_2,x_3,y_1,y_2,y_3,r)$, where~$(x_1,y_1)$, $(x_2,y_2)$,
  and~$(x_3,y_3)$ corresponds to the red, blue, and purple disk,
  respectively.
  The permutation symmetry of Figure~\ref{fig:geompackB} corresponds
  to~$\perm = (1,3,2)(4,6,5)$, where the cycle notation denotes that~$z_1$
  is mapped to~$z_3$, $z_3$ is mapped to~$z_2$, and~$z_2$ is mapped
  to~$z_1$ etc.
  Indeed, $\perm(z) = (z_{\invperm(1)},\dots,z_{\invperm(7)})$ is given by
  \[
    \perm(z)
    =
    (z_3, z_2, z_1, z_6,z_5,z_4,z_7)
    =
    (x_3,x_2,x_1,y_3,y_2,y_1,r),
  \]
  which corresponds to the exchange of colored disks.
  The reflection symmetry of Figure~\ref{fig:geompackC} corresponds to the
  signed permutation~$\sperm = (1,-1)(2,-2)(3,-3)$, where the cycle
  notation now also takes signs into account.
  Then, symmetric solution $\sperm(z) =
  (-x_1,-x_2,-x_3,y_1,y_2,y_3,r)$ indeed reflects all $x$-coordinates.
\end{example}
While signed permutations capture reflections along standard
hyperplanes, they do not allow for reflections along general affine
hyperplanes with standard normal vectors.
For example, if the box in Example~\ref{ex:geompack} is not centered at the
origin, but is given by~$[0,W] \times [0,H]$, no signed
permutation can express the reflection~$(x,y) \mapsto (W - x, y)$ along~$x
= \frac{W}{2}$.
To also capture such symmetries, we introduce another action
of~$\sperm \in \ssym{n}$ on~$\R^n$ that is adapted to an MINLP.

We still propose to encode our more general reflection symmetries via
signed permutations.
But instead of necessarily reflecting a variable~$x_i$ at the origin, we reflect a
variable at the center of its domain~$\ell_i \leq x_i \leq u_i$ as given by
the bounds of~\eqref{eq:MINLP}.
Note, however, that the center is not well-defined in
case~$[\ell_i,u_i]$ defines a half-open interval.
We therefore introduce the set
\[
  \cC
  \define
  \{i \in [n] : \abs{\ell_i} = u_i = \infty \text{ or both $\ell_i$ and $u_i$ are finite}\}
\]
containing the indices of variables whose domain has a well-defined
center.
For~$i \in \cC$, the \emph{center} of variable~$x_i$ is~$\dcenter_i
= \frac{\ell_i + u_i}{2}$, where we define~$-\infty + \infty = 0$.
For~$i \notin \cC$, we define~$\dcenter_i = 0$.
With this notion, the reflection of variable~$x_i$, $i \in [n]$, is
given by~$x_i \mapsto 2 \dcenter_i - x_i$.
If~$i \in \cC$, variable~$x_i$ is reflected at its domain center; in
particular, a variable can be reflection-symmetric to itself.
For~$i \in [n] \setminus \cC$, a variable is reflected at the origin.
\begin{definition}
  \label{def:admissable}
  Consider~\eqref{eq:MINLP} with lower and upper bounds~$\ell$ and~$u$,
  respectively, used to derive~$\cC$ and the corresponding center points.
  For~$\sperm \in \ssym{n}$, the
  \emph{reflection}~$\refl\colon\R^n \to \R^n$, for~$i \in [n]$,
  is
  \[
    \refl(x; \sperm)_i
    =
    \dcenter_i + \sign(\invsperm(i)) \cdot (x_{\abs{\invsperm(i)}} - \dcenter_{\abs{\invsperm(i)}}).
  \]
\end{definition}
That is, if~$\sign(\invsperm(i)) < 0$, then the pre-image of~$\refl(x;
\sperm)_i$ is reflected according to the map $x_{\abs{\invsperm(i)}} \mapsto 2
\dcenter_{\abs{\invsperm(i)}} - x_{\abs{\invsperm(i)}}$ and afterwards
centered around~$\dcenter_i$ via the translation~$\dcenter_i -
\dcenter_{\abs{\invsperm(i)}}$.
Otherwise, if~$\sign(\invsperm(i)) > 0$, $\refl(x; \sperm)_i$ arises by
an ordinary permutation and applying the same translation as before.
\begin{definition}
  \label{def:reflsym}
  A signed permutation~$\sperm \in \ssym{n}$ defines a \emph{reflection
    symmetry} of~\eqref{eq:MINLP} if~$\refl(\cdot; \sperm)$
  satisfies~\eqref{symdef1} and~\eqref{symdef2}.
\end{definition}
Our aim is to achieve Goals~\eqref{goal1}--\eqref{goal4}
for reflection symmetries of~\eqref{eq:MINLP}.

We close this section by showing that reflections
define an action on~$\R^n$, i.e., for~\mbox{$\sperm_1,\sperm_2 \in \ssym{n}$},
their composition~$\sperm_2 \circ \sperm_1$ satisfies~$\refl(x; \sperm_2
\circ \sperm_1) = \refl(\refl(x; \sperm_1); \sperm_2)$, and, if~$\id$ denotes the
identity, then~$\refl(x; \id) = x$.
Thus, our definition of reflections is independent from whether one first
composes signed permutations and applies a single reflection or
whether one applies a series of reflections for the corresponding signed
permutations.
\begin{lemma}
  Let~$\sgroup \leq \ssym{n}$ and let~$\dcenter_i$, $i \in [n]$, be the
  reflection center for the~$i$-th coordinate.
  Then, $\refl(\cdot; \sperm)$, $\sperm \in \sgroup$, defines a group
  action on~$\R^n$.
\end{lemma}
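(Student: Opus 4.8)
The plan is to verify the two defining axioms of a group action directly from Definition~\ref{def:admissable}: that $\refl(\cdot;\id)$ is the identity map on $\R^n$, and that $\refl(x;\sperm_2\circ\sperm_1) = \refl(\refl(x;\sperm_1);\sperm_2)$ for all $\sperm_1,\sperm_2\in\sgroup$. Since $\sgroup\leq\ssym{n}$, once we establish these two properties for arbitrary signed permutations the restriction to the subgroup $\sgroup$ is immediate, because $\sgroup$ is closed under composition and contains $\id$. So the essential content is the compatibility with composition, exactly as foreshadowed in the paragraph preceding the lemma.

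First I would dispatch the identity. Since $\id = \inv{\id}$ and $\abs{\inv{\id}(i)} = i$ with $\sign(\inv{\id}(i)) = +1$, the formula collapses to $\refl(x;\id)_i = \dcenter_i + (x_i - \dcenter_i) = x_i$, giving $\refl(x;\id) = x$. The bulk of the work is the composition identity, which I would prove coordinatewise. Fix $i\in[n]$ and write $\sperm \define \sperm_2\circ\sperm_1$, so that $\invsperm = \inv{\sperm_1}\circ\inv{\sperm_2}$. Set $j \define \abs{\inv{\sperm_2}(i)}$ and $s \define \sign(\inv{\sperm_2}(i))$. The key algebraic facts I would use are the multiplicativity of the sign and absolute value on signed permutations: because $\sperm_1$ satisfies $\inv{\sperm_1}(-k) = -\inv{\sperm_1}(k)$, one has
\[
  \abs{\invsperm(i)} = \abs{\inv{\sperm_1}(s\cdot j)} = \abs{\inv{\sperm_1}(j)}
  \quad\text{and}\quad
  \sign(\invsperm(i)) = s\cdot\sign(\inv{\sperm_1}(j)).
\]
These let me relate the indices and signs appearing in the single reflection $\refl(\cdot;\sperm)$ to those in the two-step application.

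Then I would expand both sides. On the right-hand side, writing $y \define \refl(x;\sperm_1)$, the outer reflection gives $\refl(y;\sperm_2)_i = \dcenter_i + s\cdot(y_j - \dcenter_j)$, and substituting $y_j = \dcenter_j + \sign(\inv{\sperm_1}(j))\cdot(x_{\abs{\inv{\sperm_1}(j)}} - \dcenter_{\abs{\inv{\sperm_1}(j)}})$ makes the $\dcenter_j$ terms cancel, leaving $\dcenter_i + s\cdot\sign(\inv{\sperm_1}(j))\cdot(x_{\abs{\inv{\sperm_1}(j)}} - \dcenter_{\abs{\inv{\sperm_1}(j)}})$. Using the two displayed identities, this is precisely $\dcenter_i + \sign(\invsperm(i))\cdot(x_{\abs{\invsperm(i)}} - \dcenter_{\abs{\invsperm(i)}}) = \refl(x;\sperm)_i$, which is the left-hand side. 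The main obstacle, though a mild one, is keeping the sign bookkeeping honest: the cancellation of the intermediate center $\dcenter_j$ and the factoring of the sign through the absolute-value indices both rely on the defining relation $\invsperm(-i) = -\invsperm(i)$ for signed permutations, so I would state and use that relation explicitly rather than treating it as obvious. No other subtlety arises, as the centers $\dcenter_i$ are fixed constants independent of the signed permutation being applied.
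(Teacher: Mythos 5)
Your proposal is correct and follows essentially the same route as the paper: verify the identity axiom directly, derive the multiplicativity of the absolute value and sign of $\invsperm(i)$ from the defining relation $\sperm(-j)=-\sperm(j)$ (your two displayed identities are exactly the paper's equations~\eqref{eq:auxAction} and~\eqref{eq:auxAction2} in the notation $j=\abs{\inv{\sperm_2}(i)}$, $s=\sign(\inv{\sperm_2}(i))$), and then expand coordinatewise so that the intermediate center $\dcenter_j$ cancels. No meaningful difference from the paper's argument.
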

\begin{proof}
  Let~$x \in \R^n$.
  Then, $\refl(x; \id) = x$ follows immediately.
  It thus remains to show for~$\sperm_1,\sperm_2 \in \ssym{n}$
  that~$\refl(x; \sperm_2 \circ \sperm_1) = \refl(\refl(x; \sperm_1);
  \sperm_2)$.
  To this end, observe that, for any~$i \in [n]$, we have
  \begin{equation}
    \label{eq:auxAction}
    \abs{\invsperm_1(\abs{\invsperm_2(i)})}
    =
    \abs{(\invsperm_1 \circ \invsperm_2)(i)}
    =
    \abs{\inv{(\sperm_2 \circ \sperm_1)}(i)}
  \end{equation}
  as signed permutations~$\sperm$ satisfy~$\sperm(-j) = -\sperm(j)$
  for all~$j \in [n]$.
  The same relation also implies
  \begin{equation}
    \label{eq:auxAction2}
    \sign(\invsperm_2(i)) \cdot \sign(\invsperm_1(\abs{\invsperm_2(i)}))
    =
    \sign(\invsperm_1 \circ \invsperm_2(i))
    =
    \sign(\inv{(\sperm_2 \circ \sperm_1)}(i))
  \end{equation}
  since~$y = \sign(y) \cdot \abs{y}$ for every~$y \in \R$.

  Let~$\sperm = \sperm_2 \circ \sperm_1$.
  Then,
  \begin{align*}
    \refl(x; \sperm_1)_{\abs{\invsperm_2(i)}}
    &=
      \dcenter_{\abs{\invsperm_2(i)}}
      +
      \sign(\invsperm_1(\abs{\invsperm_2(i)}))
      \cdot
      (
      x_{\abs{\invsperm_1(\abs{\invsperm_2(i)})}}
      -
      \dcenter_{\abs{\invsperm_1(\abs{\invsperm_2(i)})}}
      )\\
    &\overset{\eqref{eq:auxAction}}{=}
      \dcenter_{\abs{\invsperm_2(i)}}
      +
      \sign(\invsperm_1(\abs{\invsperm_2(i)}))
      \cdot
      (
      x_{\abs{\invsperm(i)})}
      -
      \dcenter_{\abs{\invsperm(i)})}
      ),
  \end{align*}
  and thus,
  \begin{align*}
    \refl(\refl(x; \sperm_1); \sperm_2)_i
    &=
      \dcenter_{i}
      +
      \sign(\invsperm_2(i)) \cdot (\refl(x; \sperm_1)_{\abs{\invsperm_2(i))}} - \dcenter_{\abs{\invsperm_2(i))}})\\
    &=
      \dcenter_i
      +
      \sign(\invsperm_2(i)) \cdot ( \dcenter_{\abs{\invsperm_2(i)}} - \dcenter_{\abs{\invsperm_2(i)}})\\
    &\phantom{\;= \dcenter_i}
      +
      \sign(\invsperm_2(i)) \cdot \sign(\invsperm_1(\abs{\invsperm_2(i)}))
      \cdot
      (
      x_{\abs{\invsperm(i)})}
      -
      \dcenter_{\abs{\invsperm(i)})}
      )\\
    &\overset{\eqref{eq:auxAction2}}{=}
      \dcenter_i + \sign(\invsperm(i)) \cdot (x_{\abs{\invsperm(i)})} - \dcenter_{\abs{\invsperm(i)})})\\
    &=
      \refl(x; \sperm).
  \end{align*}
  Thus, $\refl(\cdot; \sperm)$ defines a group action.
  \end{proof}

\section{Existing Symmetry Detection Schemes}
\label{sec:existingdetection}

In preparation for our symmetry detection framework, this section reviews
four schemes discussed in the literature.
These schemes have been developed to detect permutation symmetries of
mixed-integer linear programs (MILP), signed permutation symmetries of
MILPs, reflection symmetries in satisfiability problems (SAT), and
permutation symmetries of MINLPs, respectively.

\paragraph{Permutation Symmetries of MILP.}

Since already deciding whether~$\sym{n}$ is the permutation symmetry group
of an MILP is NP-complete~\cite{Margot2010}, one usually only considers
symmetries that keep the formulation of an MILP invariant.
Given an MILP
\[
  \min \{ \sprod{c}{x} : Ax \leq b,\; x_i \in \Z \text{ for all } i \in
  \cI,\; x \in \R^n\},
\]
where~$A \in \R^{m \times n}$, $b \in \R^m$, $c \in \R^n$, and~$\cI \subseteq
[n]$, a \emph{formulation symmetry} is a permutation~$\perm \in \sym{n}$
for which there exists~$\perm' \in \sym{m}$ such that
\begin{itemize}
\item $\perm(c) = c$ and~$\perm'(b) = b$;
\item for all~$(i,j) \in [m] \times [n]$, one has~$A_{\perm'(i),\perm(j)} =
  A_{i,j}$;
\item $\perm$ keeps~$\cI$ invariant.
\end{itemize}

For detecting formulation symmetries, \cite{Salvagnin2005} observes that
the formulation symmetry group of an MILP is isomorphic to the
color-preserving automorphism group of the following graph.
This graph contains a node for every variable as well as each constraint
from~$Ax \leq b$.
Every variable node receives a color, which is determined based on the
variable's type (objective coefficient, an (non-) integrality); every
constraint node gets a color based on the constraint's right-hand side.
Moreover, there is an edge between the node of variable~$x_j$ and the node
of the~$i$-th constraint if and only if~$A_{i,j} \neq 0$.
The edge is colored based on the coefficient~$A_{i,j}$.
An illustration of the symmetry detection graph for the MILP
\begin{alignat}{11}
  \label{eq:exIllustr}
  \begin{aligned}
  \min &&   x_1&& -&& x_2&& + && 2x_3&& +&& 2x_4 &&&&\\
       &&      &&  &&    &&   && x_3&& + &&x_4   &&\leq&& 1\\
       &&- x_1 && +&& x_2&& + && 3x_3&&  &&      &&\leq&& 4\\
       &&- x_1 && +&& x_2&&   &&    &&+  &&3x_4  &&\leq&& 4
  \end{aligned}
\end{alignat}
is given in Figure~\ref{fig:exIllustrA}.

Automorphisms of such symmetry detection graphs can be found by graph
automorphism tools such as \solver{bliss}~\cite{bliss},
\solver{dejavu}~\cite{dejavu}, or \solver{nauty}~\cite{nauty}.
We remark that these tools can only handle node colors, but not edge
colors.
The graphs thus need to be manipulated by replacing an edge~$\{u,v\}$ by an
auxiliary node~$w$, which receives the color of~$\{u,v\}$, and the
edges~$\{u,w\}$ and~$\{v,w\}$.
To reduce the number of auxiliary nodes, grouping techniques can
identify auxiliary nodes of several edges with each other, see~\cite{PfetschRehn2019}
and Section~\ref{sec:technicalDetails}.

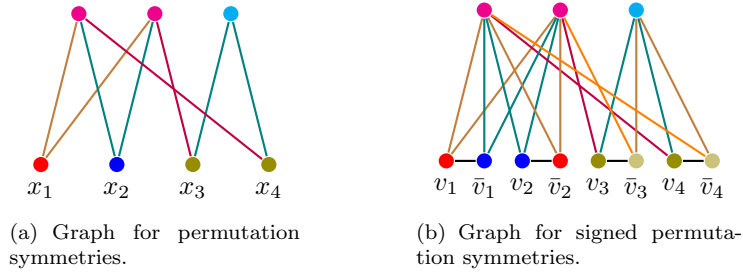
\begin{figure}[t]
  \centering
  \subfigure[Graph for permutation symmetries.\label{fig:exIllustrA}]{
    \begin{tikzpicture}[scale=1.0]
      \tikzstyle{v} += [circle,thick,inner sep=2pt,minimum size=2mm];

      \node (x1) at (0,0) [v,fill=red,label=below:$x_1$]{};
      \node (x2) at (1,0) [v,fill=blue,label=below:$x_2$]{};
      \node (x3) at (2,0) [v,fill=olive,label=below:$x_3$]{};
      \node (x4) at (3,0) [v,fill=olive,label=below:$x_4$]{};

      \node (c1) at (2.5,2) [v,fill=cyan] {};
      \node (c2) at (1.5,2) [v,fill=magenta] {};
      \node (c3) at (0.5,2) [v,fill=magenta] {};

      \draw[-,thick,draw=teal] (c1) -- (x3);
      \draw[-,thick,draw=teal] (c1) -- (x4);
      \draw[-,thick,draw=brown] (c2) -- (x1); 
      \draw[-,thick,draw=teal] (c2) -- (x2);
      \draw[-,thick,draw=purple] (c2) -- (x3);
      \draw[-,thick,draw=brown] (c3) -- (x1); 
      \draw[-,thick,draw=teal] (c3) -- (x2);
      \draw[-,thick,draw=purple] (c3) -- (x4);
   \end{tikzpicture}
 }
 \qquad\qquad
  \subfigure[Graph for signed permutation
  symmetries.\label{fig:exIllustrB}]{
    \begin{tikzpicture}
      \tikzstyle{v} += [circle,thick,inner sep=2pt,minimum size=2mm];

      \node (x1) at (0.0,0) [v,fill=red,label=below:$v_1$]{};
      \node (x2) at (1.0,0) [v,fill=blue,label=below:$v_2$]{};
      \node (x3) at (2.0,0) [v,fill=olive,label=below:$v_3$]{};
      \node (x4) at (3.0,0) [v,fill=olive,label=below:$v_4$]{};
      \node (v1) at (0.5,0) [v,fill=blue,label=below:$\bar{v}_1$]{};
      \node (v2) at (1.5,0) [v,fill=red,label=below:$\bar{v}_2$]{};
      \node (v3) at (2.5,0) [v,fill=olive!50,label=below:$\bar{v}_3$]{};
      \node (v4) at (3.5,0) [v,fill=olive!50,label=below:$\bar{v}_4$]{};

      \draw[-,thick] (x1) -- (v1);
      \draw[-,thick] (x2) -- (v2);
      \draw[-,thick] (x3) -- (v3);
      \draw[-,thick] (x4) -- (v4);

      \node (c1) at (2.5,2) [v,fill=cyan] {};
      \node (c2) at (1.5,2) [v,fill=magenta] {};
      \node (c3) at (0.5,2) [v,fill=magenta] {};

      \draw[-,thick,draw=teal] (c1) -- (x3);
      \draw[-,thick,draw=teal] (c1) -- (x4);
      \draw[-,thick,draw=brown] (c2) -- (x1); 
      \draw[-,thick,draw=teal] (c2) -- (x2);
      \draw[-,thick,draw=purple] (c2) -- (x3);
      \draw[-,thick,draw=brown] (c3) -- (x1); 
      \draw[-,thick,draw=teal] (c3) -- (x2);
      \draw[-,thick,draw=purple] (c3) -- (x4);      

      \draw[-,thick,draw=brown] (c1) -- (v3);
      \draw[-,thick,draw=brown] (c1) -- (v4);
      \draw[-,thick,draw=teal] (c2) -- (v1); 
      \draw[-,thick,draw=brown] (c2) -- (v2);
      \draw[-,thick,draw=orange] (c2) -- (v3);
      \draw[-,thick,draw=teal] (c3) -- (v1); 
      \draw[-,thick,draw=brown] (c3) -- (v2);
      \draw[-,thick,draw=orange] (c3) -- (v4);      
    \end{tikzpicture}
  }
  \caption{Illustration of symmetry detection graphs for
    Problem~\eqref{eq:exIllustr}.}
  \label{fig:exIllustr}
\end{figure}

\paragraph{Signed Permutation Symmetries of MILP.}

For detecting formulation symmetries of an MILP corresponding
to signed permutations, \cite{Bodi2013} also suggests to find automorphisms
of a colored graph.
For every variable~$x_j$, their graph contains two nodes~$v_j$
and~$\bar{v}_j$, where~$v_j$ will represent the original variable~$x_j$
and~$\bar{v}_j$ the negation~$-x_j$.
Moreover, the~$i$-th constraint is represented by a node~$w_i$, and every
entry~$A_{i,j}$ of the constraint~$A$ gets two nodes~$a_{i,j}$
and~$\bar{a}_{i,j}$.
Finally, there are three groups of nodes representing the numerical
coefficients of the MILP: one group consisting of the right-hand side
values~$\{b_1,\dots,b_m\}$, another group consisting of the original and
negated objective coefficients~$\{\pm c_1, \dots, \pm c_n\}$, and the last
group consisting of the original and negated matrix entries~$\{\pm A_{i,j}
: (i,j) \in [m] \times [n]\}$.
The edge set is given by
\begin{align*}
  &\left\{\{v_j, c_j\}, \{\bar{v}_j, -c_j\} : j \in [n] \right\}\\
  \cup & \left\{ \{w_i, b_i\} : i \in [m] \right\}\\
  \cup &
         \left\{\{a_{i,j}, A_{i,j}\},
         \{v_j, a_{i,j}\},
         \{w_i, a_{i,j}\}
         :
         (i,j) \in [m] \times [n] \right\}\\
  \cup &
         \left\{\{\bar{a}_{i,j}, -A_{i,j}\},
         \{\bar{v}_j, \bar{a}_{i,j}\},
         \{w_i, \bar{a}_{i,j}\}
         :
         (i,j) \in [m] \times [n] \right\}\\
  \cup &
         \left\{ \{v_j, \bar{v}_j\} : j \in [n] \right\}
\end{align*}
In contrast to~\cite{Salvagnin2005}, edges remain uncolored.
Instead, there is a unique color for each of the variable, constraint, and
coefficients groups~$\{v_j, \bar{v}_j : j \in [n]\}$,
${\{w_i : i \in [n]\}}$, and~$\{a_{i,j}, \bar{a}_{i,j} : (i,j) \in [m] \times [n]\}$,
respectively; nodes representing a numerical value, receive a color
corresponding to their numerical value.

The idea of this graph is to not only encode the original constraint
system as in~\cite{Salvagnin2005}, but also the negated constraint system.
The last class of edges~$\{v_j, \bar{v}_j\}$, $j \in [n]$, ensures the
property that~$\sperm(-j) = - \sperm(j)$ for a signed permutation~$\sperm$.

Note that the graph can be represented more compactly:
On the one hand, numerical value nodes can be removed and instead nodes
adjacent with them can receive their color.
On the other hand, nodes~$a_{i,j}$ and~$\bar{a}_{i,j}$ can be removed, and
instead one connects~$v_j$ and~$w_i$ by an edge colored according
to~$A_{i,j}$ (analogously for negated variable nodes~$\bar{v}_j$).
One can thus interpret nodes~$a_{i,j}$ and~$\bar{a}_{i,j}$ as the auxiliary
nodes that arise when substituting colored edges in the graph
of~\cite{Salvagnin2005}.
As such, the grouping techniques described in~\cite{PfetschRehn2019} can
also be used to reduce the number of needed nodes~$a_{i,j}$ and~$\bar{a}_{i,j}$.
Figure~\ref{fig:exIllustrB} illustrates the more compact graph,
better revealing relations to the graph by~\cite{Salvagnin2005}.

\paragraph{Reflection Symmetries for SAT}

Consider variables~$x_1,\dots,x_n \in \B{}$.
The reflection of~$x_j$ is given by~$\bar{x}_j = 1 - x_j$,
cf. Definition~\ref{def:admissable}.
Using MINLP notation, a SAT formula can be represented by a set
of~$m$ linear constraints
\[
  \sum_{j \in J_i^+} x_j + \sum_{j \in J_i^-} \bar{x}_j \geq 1,
\]
where~$J_i^+, J_i^- \subseteq [n]$ are disjoint for every~$i \in [m]$, and
the objective being constant~0.
That is, SAT problems are pure feasibility problems.

A common way to detect reflection symmetries for SAT is described
in~\cite{Sakallah2021}.
As in~\cite{Bodi2013}, there are nodes~$v_j$ and~$\bar{v}_j$ to
represent variable~$x_j$ and its reflection~$\bar{x}_j$ for~$j \in [n]$,
and there is a node~$w_i$ for every~$i \in [m]$; the edge set
is
\[
  \left\{ \{w_i,v_j\} : i \in [m], j \in J_i^+\right\}
  \cup
  \left\{ \{w_i,\bar{v}_j\} : i \in [m], j \in J_i^-\right\}
  \cup
  \left\{ \{v_j, \bar{v}_j : j \in [n]\} \right\}.
\]
The role of the last group of edges is again to make sure that if a
symmetry maps~$v_j$ onto~$v_{j'}$ or~$\bar{v}_{j'}$, then~$\bar{v}_{j'}$ is
mapped onto~$\bar{v}_{j}$ or~$v_{j}$, respectively.

In contrast to the previous graphs, no colors are needed since there is
only one type of coefficients.
Furthermore, this graph allows to detect reflection symmetries, because all
variables have the same domain.
Despite the similarities to the previous graphs for MILP, the latter
cannot immediately be used for detecting reflection symmetries in
MILP as variables might have different domains.
We resolve this issue in our symmetry detection scheme in
Section~\ref{sec:detection}.

\paragraph{Permutation Symmetries in MINLP}

As discussed above, one of the difficulties for detecting symmetries
of~\eqref{eq:MINLP} is that checking whether two nonlinear equations are
equivalent is undecidable.
The symmetry detection scheme of~\cite{Liberti2012a} resolves this issue by
restricting to nonlinear constraints~$g_k(x)$, $k \in [m]$, that admit a
representation via expression trees~\cite{Cohen2003}.
An \emph{expression tree} for~$g_k$ is an arborescence~$T_k$ in
which all arcs point away from the root node.
Each leaf node belongs to exactly one of two classes, it is either a variable
or value node, and the non-leaf nodes are operator nodes.
Each variable node corresponds to a variable present in~$g_k$ and each
value node holds a numerical value.
Every operator node~$v$ corresponds to an~$d$-ary mathematical operator,
where~$d$ is the number of children of~$v$.
The nonlinear function~$g_k$ can then be recovered from~$T_k$ by
iteratively evaluating the nodes of~$T_k$, where evaluating a node means to
assign it a mathematical function.
Leaf nodes evaluate to the corresponding variables or numerical values.
Operator nodes~$v$ are evaluated by applying the operator of~$v$ to the
functions assigned to the children of~$v$.
By evaluating the nodes in a bottom-up fashion, the last processed node is
the root node, whose assigned function is~$g_k$, see
Figure~\ref{fig:exprTree} for an illustration.

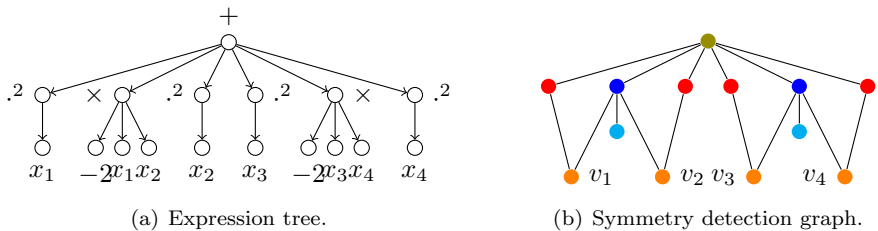
\begin{figure}[t]
  \centering
  \subfigure[Expression tree.\label{fig:exprTree}]{
    \begin{tikzpicture}[scale=0.7]
      \tikzstyle{v} += [circle,inner sep=2pt,minimum size=2mm,draw=black];

      \node (r) at (0,0) [v,label=above:$+$] {};
      \node (l11) at (-3.5,-1) [v,label=left:$\cdot^2$] {};
      \node (l12) at (-2.0,-1) [v,label=left:$\times$] {};
      \node (l13) at (-0.5,-1) [v,label=left:$\cdot^2$] {};
      \node (l14) at (0.5,-1) [v,label=right:$\cdot^2$] {};
      \node (l15) at (2.0,-1) [v,label=right:$\times$] {};
      \node (l16) at (3.5,-1) [v,label=right:$\cdot^2$] {};

      \draw[->] (r) -- (l11);
      \draw[->] (r) -- (l12);
      \draw[->] (r) -- (l13);
      \draw[->] (r) -- (l14);
      \draw[->] (r) -- (l15);
      \draw[->] (r) -- (l16);

      \node (l21) at (-3.5,-2) [v,label=below:$x_1$] {};
      \node (l22) at (-2.5,-2) [v,label=below:$-2$] {};
      \node (l23) at (-2.0,-2) [v,label=below:$x_1$] {};
      \node (l24) at (-1.5,-2) [v,label=below:$x_2$] {};
      \node (l25) at (-0.5,-2) [v,label=below:$x_2$] {};
      \node (l26) at (0.5,-2) [v,label=below:$x_3$] {};
      \node (l27) at (1.5,-2) [v,label=below:$-2$] {};
      \node (l28) at (2.0,-2) [v,label=below:$x_3$] {};
      \node (l29) at (2.5,-2) [v,label=below:$x_4$] {};
      \node (l210) at (3.5,-2) [v,label=below:$x_4$] {};

      \draw[->] (l11) -- (l21);
      \draw[->] (l12) -- (l22);
      \draw[->] (l12) -- (l23);
      \draw[->] (l12) -- (l24);
      \draw[->] (l13) -- (l25);

      \draw[->] (l14) -- (l26);
      \draw[->] (l15) -- (l27);
      \draw[->] (l15) -- (l28);
      \draw[->] (l15) -- (l29);
      \draw[->] (l16) -- (l210);
    \end{tikzpicture}
  }
  \qquad
  \subfigure[Symmetry detection graph.\label{fig:exprTreeDetect}]{
    \begin{tikzpicture}[scale=0.6]
      \tikzstyle{v} += [circle,inner sep=2pt,minimum size=2mm];

      \node (r) at (0,0) [v,fill=olive] {};
      \node (l11) at (-3.5,-1) [v,fill=red] {};
      \node (l12) at (-2.0,-1) [v,fill=blue] {};
      \node (l13) at (-0.5,-1) [v,fill=red] {};
      \node (l14) at (0.5,-1) [v,fill=red] {};
      \node (l15) at (2.0,-1) [v,fill=blue] {};
      \node (l16) at (3.5,-1) [v,fill=red] {};

      \draw[-] (r) -- (l11);
      \draw[-] (r) -- (l12);
      \draw[-] (r) -- (l13);
      \draw[-] (r) -- (l14);
      \draw[-] (r) -- (l15);
      \draw[-] (r) -- (l16);

      \node (val1) at (-2,-2) [v,fill=cyan] {};
      \node (val2) at (2,-2) [v,fill=cyan] {};
      \node (x1) at (-3.0,-3) [v,fill=orange,label=right:{$v_1$}] {};
      \node (x2) at (-1.0,-3) [v,fill=orange,label=right:{$v_2$}] {};
      \node (x3) at (1.0,-3) [v,fill=orange,label=left:{$v_3$}] {};
      \node (x4) at (3.0,-3) [v,fill=orange,label=left:{$v_4$}] {};

      \draw[-] (l12) -- (val1);
      \draw[-] (l15) -- (val2);
      \draw[-] (l11) -- (x1);
      \draw[-] (l12) -- (x1);
      \draw[-] (l12) -- (x2);
      \draw[-] (l13) -- (x2);
      \draw[-] (l14) -- (x3);
      \draw[-] (l15) -- (x3);
      \draw[-] (l15) -- (x4);
      \draw[-] (l16) -- (x4);
    \end{tikzpicture}
  }  
  \caption{Expression tree and symmetry detection graph
    for~$x_1^2 - 2x_1x_2 + x_2^2 + x_3^2 - 2x_3x_4 + x_4^2$.}
\end{figure}

Expression trees can also make use of non-commutative operators,
e.g., the minus-operator.
In this case, arcs receive labels to indicate the order of the
input.
For the ease of exposition, we assume all operators to be commutative, but
all results can be easily refined for non-commutative operators.

To detect permutation symmetries for MINLPs whose constraints are
represented via expression trees, \cite{Liberti2012a} suggests to derive a
symmetry detection graph from the expression trees, see
Figure~\ref{fig:exprTreeDetect} for an illustration.
For each variable~$x_j$, $j \in [n]$, construct a node~$v_j$, and, for each
constraint~$g_k$, $k \in [m]$, of~\eqref{eq:MINLP}, construct an
undirected version of~$T_k$.
Remove the leaves corresponding to variables~$x_j$ and
connect their parents to~$v_j$ instead.
That is, the entire graph contains only a single node corresponding
to~$x_j$.
Then, the operators and numerical values are interpreted as distinct
colors.
Finally, color the nodes~$v_j$, $j \in [n]$, according to their type
(same objective coefficient, and lower and upper
bounds).\footnote{Formally, \cite{Liberti2012a} introduces a separate
  expression tree for the objective, but due to our assumption of a linear
  objective, the graph we describe is a bit simpler.}
One can show that every color preserving automorphism of this graph
corresponds to a symmetry of~\eqref{eq:MINLP}.

\section{A Framework for Symmetry Detection in MINLP}
\label{sec:detection}

The core of our symmetry detection framework is an abstract notion of a
symmetry detection graph, generalizing the existing frameworks.
This will be useful, on the one hand, since different classes of
constraints (e.g., linear, nonlinear) might allow for more compact
representations to detect symmetries. On the other hand, this notion is
flexible towards achieving Goal~\eqref{goal3}, the development of
an easily extendable open-source tool.
After defining symmetry detection graphs, Section~\ref{sec:basicdetection}
provides a basic instantiation of the abstract framework to detect reflection
symmetries in MINLPs.
Section~\ref{sec:enhanceddetection} then refines the previous concepts to
detect more symmetries.

Before we define symmetry detection graphs, we introduce some notation.
Consider a list of variables~$x_i$, $i \in [n]$, with lower bounds~$\ell_i
\in \R \cup \{-\infty\}$ and upper bounds~$u_i\in \R \cup \{\infty\}$, as well as
objective coefficient~$c_i \in \R$.
For~$i \in [n]$, let~$\dcenter_i$ be the reflection center of
variable~$x_i$ as defined in Section~\ref{sec:problem}, and denote
by~$x_{-i}$ the reflection of~$x_i$ at~$\dcenter_i$.
That is, variable~$x_{-i}$ has lower bound~$\ell_{-i} =
2\dcenter_i - u_i$, upper bound~$u_{-i} = 2\dcenter_i - \ell_i$, and
objective coefficient~$c_{-i} = -c_i$.
Moreover, for a positive integer~$n$, let~$[\pm n] \define \{\pm 1, \dots,
\pm n\}$.
\begin{definition}
  \label{def:symdetectgraph}
  Let~$P$ be an instance of~\eqref{eq:MINLP}.
  Let~$G$ be a node and edge colored connected graph that contains
  pairwise distinct distinguished nodes~$v_i$, $i \in [\pm n]$.
  We call~$G$ a \emph{symmetry detection graph (\SDG)} for~$P$ if
  each color-preserving automorphism~$\perm$ of~$G$ satisfies:
  \begin{enumerate}
  \item $\perm$ keeps~$\{v_i : i \in [\pm n]\}$ invariant;
  \item if~$\perm(v_i) = v_j$ for some~$i,j \in [\pm n]$,
    then~$\perm(v_{-i}) = v_{-j}$;
  \item the signed permutation~$\sperm \in \ssym{n}$ defines a reflection
    symmetry of~$P$, where, for each~$i \in [\pm n]$,
    $\sperm(i) = j$ for the unique~$j$ with~$\perm(v_i) = v_j$.
  \end{enumerate}
\end{definition}
Note that \SDGs are well-defined:
Due to the first property, the second property is well-defined,
and~$\sperm$ in the third property is indeed a signed permutation due to
the first and second property.

Definition~\ref{def:symdetectgraph} seems to be the right notion for
detecting reflection symmetries as it provides an abstract generalization
of the symmetry detection frameworks of Section~\ref{sec:existingdetection}.
But to use it, one needs a concrete mechanism to build an \SDG.
While black-box solvers can use fixed sets of rules for building \SDGs,
an easily extendable tool needs to be flexible to also
support unknown custom constraints, e.g., nonlinear
constraints not admitting a (simple) representation via expression trees.
We therefore suggest to use an abstract mechanism to build an \SDG from
\SDGs of single constraints as formalized next.
This mechanism is the core of our implementation, which we discuss in
Section~\ref{sec:implementation} in detail.

Let~$P$ be an instance of~\eqref{eq:MINLP} with~$m$
constraints~${g_1,\dots,g_m\colon \R^n \to \R}$.
For~$k \in [m]$, denote by~$P_k$ the MINLP arising from~$P$ by
removing all constraints except for~$g_k$.
To build an \SDG for~$P$ from the \SDGs~$G_1,\dots,G_m$
of~$P_1,\dots,P_m$, our idea is to take the disjoint union
of~$G_1,\dots,G_m$, and to identify the distinguished nodes for
variable~$x_i$, $i \in [\pm n]$, with each other.
To apply this idea, we need to make sure that the colors of the
different \SDGs match.
That is, equivalent objects can only be mapped to equivalent objects.

To model equivalence of variables, we introduce variable types.
For a variable~$x_i$, $i \in [n]$, of~\eqref{eq:MINLP} with lower
bound~$\ell_i$, upper bound~$u_i$, objective coefficient~$c_i$, and a
Boolean encoding if~$i \in \cI$, the \emph{type} of~$x_i$ is
\[
  \type(x_i) \define (\ell_i - \dcenter_i, u_i - \dcenter_i, c_i, i \in \cI).
\]
That is, the type is defined by the lower and upper bounds relative to the
reflection center, the objective coefficient, and integrality status.
Using relative lower and upper bounds instead of absolute bounds will allow
us to detect symmetries also of variables with different domain centers.
The type of~$x_{-i}$ is defined based on the reflected variable, i.e.,
${\type(x_{-i}) = (\dcenter_i - u_i, \dcenter - \ell_i, -c_i, i \in \cI)}$.
We then define the set of variable colors as~$\variables = \{\type(x_i) : i
\in [\pm n]\}$, i.e., each type is associated with a unique color.
An \SDG is called \emph{variable color compatible} if node~$v_i$
associated with variable~$x_i$, $i \in [\pm n]$, is colored by
color~$\type(x_i)$ and none of the non-distinguished nodes receives a color
from~$\variables$.

Besides mapping variables of the same type to each other, we need to
ensure that the set of constraints remains invariant after applying a
reflection symmetry.
This is achieved via the concepts of anchors and constraint
compatibility.
An \SDG~$G = (V,E)$ is called \emph{anchored} if there is a non-distinguished~$a \in V$
such that, for each~$v \in V \setminus \{a\}$, there is a path from~$a$
to~$v$ in~$G$ such that neither of the interior nodes along the path is a
distinguished variable node.
We call~$a$ an \emph{anchor} of~$G$.
For the second concept,
let~$k,k' \in [m]$.
Let~$G_k = (V_k,E_k)$ and~$G_{k'} = (V_{k'}, E_{k'})$ be \SDGs for~$P_k$
and~$P_{k'}$, respectively.
We call~$G_k$ and~$G_{k'}$ \emph{constraint compatible} if they are
non-isomorphic or~$g_k \equiv g_{k'}$,
where $g_k \equiv g_{k'}$ means that there is a permutation~$\perm \in
\sym{n}$ such that~$g_k(x) = g_{k'}(\perm(x))$.
\begin{theorem}
  \label{thm:buildSDG}
  Let~$P$ be an MINLP with~$m$ constraints. Let~$G_k = (V_k,E_k)$, $k
  \in [m]$, be an anchored \SDG for~$P_k$ with distinguished nodes~$v^k_i$,
  $i \in [\pm n]$.
  Suppose~$G_1,\dots,G_m$ are variable color and
  constraint compatible.
  Let~$v_i$, ${i \in [\pm n]}$, be pairwise distinct with color~$\type(x_i)$
  and let
  \begin{align*}
    V =& \{v_i : i \in [\pm n] \} \cup \bigcup_{k = 1}^m V_k \setminus
        \{v^k_i : i \in [\pm n]\},\\
    E =& \left\{ \{v_i, v_{-i}\} : i \in [n] \right\}
        \cup \bigcup_{k = 1}^m \left\{ \{v_i,v\} : \{v_i^k, v\} \in E_k
        \text{ and } v \neq v_{-i}^k
        \right\}\\
       &\cup \bigcup_{k = 1}^m \{ e \in E_k : e \text{ contains no
        distinguished node of } G_k\}.
  \end{align*}
  Then, $G = (V,E)$ is an \SDG for~$P$ with distinguished
  nodes~$v_i$, $i \in [\pm n]$.
\end{theorem}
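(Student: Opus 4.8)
The plan is to verify that the constructed graph $G = (V,E)$ satisfies all three defining properties of an \SDG from Definition~\ref{def:symdetectgraph}. The construction glues the individual \SDGs $G_k$ along shared distinguished variable nodes, so the core task is to show that any color-preserving automorphism $\perm$ of $G$ (a) permutes the distinguished nodes among themselves, (b) respects the sign-pairing $v_i \leftrightarrow v_{-i}$, and (c) induces a signed permutation that is an actual reflection symmetry of $P$. First I would establish property~1: since $G$ is variable color compatible, the distinguished nodes $v_i$ carry colors from $\variables$ and no other node does, so any color-preserving $\perm$ must send $\{v_i : i \in [\pm n]\}$ into itself. Property~2 follows because the only edges among distinguished nodes are the pairing edges $\{v_i, v_{-i}\}$, so if $\perm(v_i) = v_j$ then $\perm$ must carry the unique distinguished-neighbor $v_{-i}$ to the unique distinguished-neighbor $v_{-j}$, giving $\perm(v_{-i}) = v_{-j}$; one also checks that the colors $\type(x_i)$ and $\type(x_{-i})$ are consistent with this pairing.

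The heart of the argument is property~3: the induced signed permutation $\sperm$ is a reflection symmetry. Here I would exploit the \emph{anchored} hypothesis. Because each $G_k$ is anchored, removing the distinguished nodes leaves each $G_k$'s interior connected to its anchor $a_k$ without passing through another variable node; this is what guarantees that an automorphism of the glued graph cannot "mix" the interiors of different constraint gadgets in a way that violates constraint structure. Concretely, I would argue that $\perm$ must map the interior (non-distinguished) nodes of each $G_k$ onto the interior of some $G_{k'}$ as a block: an anchor $a_k$ has a color not in $\variables$, and any path from $a_k$ that reaches a distinguished node first traverses only interior nodes of $G_k$, so $\perm$ carries the connected interior component of $G_k$ to the interior component of a single $G_{k'}$. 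The anchoredness thus forces $\perm$ to restrict to a color-preserving isomorphism $G_k \to G_{k'}$ (after re-attaching the matched distinguished nodes), whence $G_k \cong G_{k'}$.

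Once the block structure is in hand, I would invoke \emph{constraint compatibility}: since $G_k \cong G_{k'}$ under the restriction of $\perm$, compatibility forces $g_k \equiv g_{k'}$, i.e.\ there is $\pi \in \sym{n}$ with $g_k(x) = g_{k'}(\pi(x))$. Combined with the fact that each individual $G_k$ is itself an \SDG for $P_k$ (so its color-preserving automorphisms already encode reflection symmetries of the single constraint $g_k$), the restriction of $\perm$ to the $k$-th block witnesses that applying $\refl(\cdot;\sperm)$ maps the feasible region of $g_k$ onto that of $g_{k'}$. Ranging over all $k$ shows that $\refl(\cdot;\sperm)$ permutes the full constraint set, establishing~\eqref{symdef1}; the variable-type matching (which preserves objective coefficients up to the sign flips recorded in $\type(x_{-i})$) yields~\eqref{symdef2}. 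I would assemble these block-wise conclusions using the fact, proved in the lemma above, that $\refl(\cdot;\sperm)$ is a genuine group action on $\R^n$, so the local reflection data glue into a single coherent reflection symmetry of $P$.

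The main obstacle I anticipate is precisely the block-decomposition step: showing rigorously that a global automorphism of the glued graph cannot send part of one constraint gadget into a different gadget or across the shared variable nodes in an inconsistent way. The anchor condition is the technical device that rules this out, but making the connectivity/"no interior node is a distinguished variable" argument airtight — especially handling the identified distinguished nodes $v_i$, which belong to several gadgets simultaneously — is where the care is needed. A secondary subtlety is confirming that the edge-rewriting in the construction (replacing $\{v_i^k, v\}$ by $\{v_i, v\}$, dropping edges to $v_{-i}^k$, and keeping the pairing edges $\{v_i,v_{-i}\}$) faithfully preserves each $G_k$'s \SDG property after gluing, so that the per-constraint reflection symmetries are not corrupted by the identification.
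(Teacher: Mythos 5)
Your proposal is correct and follows essentially the same route as the paper's proof: property~1 from variable color compatibility, property~2 from the pairing edges $\{v_i,v_{-i}\}$ being the only edges among distinguished nodes, and property~3 by using anchoredness and path-preservation to force $\perm$ to map each $G_k$'s non-distinguished nodes en bloc onto some $G_{k'}$, then constraint compatibility to get $g_k \equiv g_{k'}$, and finally the per-constraint \SDG property to conclude. The "block decomposition" step you flag as the main obstacle is exactly the step the paper handles with the same path-from-the-anchor argument, so no additional ideas are needed.
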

\begin{proof}
  Let~$\perm$ be a color-preserving automorphism of~$G$.
  We need to show that~$\perm$ satisfies the three properties of
  Definition~\ref{def:symdetectgraph}.
  The first property holds since~$v_i$, $i \in [\pm n]$, are the only
  nodes receiving a variable color from~$\variables$ by variable color
  compatibility of~$G_1,\dots,G_m$.
  The second property is satisfied, because the only edges between the
  distinguished nodes~$v_i$, $i \in [\pm n]$, are the
  edges~$\{v_i,v_{-i}\}$.
  Consequently, if~$\perm(v_i) = v_j$ for some~$j \in [\pm n]$,
  also~$\perm(v_{-i}) = v_{-j}$ holds.

  For verifying the last property, observe that each path in~$G$
  corresponds to a path in~$\perm(G)$.
  This in particular holds for the paths originating from the anchors
  of~$G_1,\dots,G_m$ to the remaining nodes of the respective graphs.
  Consequently, if~$\perm$ maps a
  non-distinguished node of~$G_k$ to a non-distinguished node of~$G_{k'}$
  for~$k,k' \in [m]$, it also maps the remaining non-distinguished
  nodes of~$G_k$ to nodes of~$G_{k'}$.
  As a consequence, $G_k$ and~$G_{k'}$ are isomorphic, because we can
  identify~$v_i$, $i \in [\pm n]$, with both~$v_i^k$ and~$v_i^{k'}$.
  By constraint compatibility of~$G_1,\dots,G_m$ we thus conclude~$g_k
  \equiv g_{k'}$.
  Property three for~$G$ then follows from the same property
  for~$G_1,\dots,G_m$ since we can mutually identify the distinguished
  nodes of~$G_1,\dots,G_m$ with the distinguished nodes of~$G$. \end{proof}
Note that all frameworks of Section~\ref{sec:existingdetection} are
(slight variations of) an instantiation of Theorem~\ref{thm:buildSDG}.
When ignoring colors, the framework of~\cite{Sakallah2021} for SAT problems
follows Theorem~\ref{thm:buildSDG}, and when reflecting variables
at the origin, the framework of~\cite{Bodi2013} for signed permutations for
MILPs can be derived from Theorem~\ref{thm:buildSDG}.
If one ignores reflections and has only distinguished nodes for
non-reflected variables, also the frameworks for MILP~\cite{Salvagnin2005}
and MINLP~\cite{Liberti2012a} follow from Theorem~\ref{thm:buildSDG}.

\subsection{Basic Framework for Detecting Reflection Symmetries in MINLP}
\label{sec:basicdetection}

We now turn the focus to the detection of reflection symmetries of an
instance~$P$ of~\eqref{eq:MINLP} with~$m$ constraints that are represented
by expression trees.
Due to Theorem~\ref{thm:buildSDG}, it is sufficient to find, for
each~$k \in [m]$, an anchored \SDG~$G_k$ for~$P_k$ such that~$G_1,\dots,G_m$
are variable color and constraint compatible.
To this end, we combine the ideas of~\cite{Bodi2013}
and~\cite{Liberti2012a}.
We will illustrate the concepts introduced in this section using the
example
\begin{equation}
  \label{eq:exampleSDG}
  \min\{0:
  4x_1 - 4x_2 + x_3 - x_4\leq 0,\;
  x_1,x_2 \in [-1,1],
  x_3 \in [1,3],
  x_4 \in [-2,0]
  \}.
\end{equation}

For deriving our \SDG, it will be convenient to apply two modifications to
an expression tree~$T$.
First, we transform~\eqref{eq:MINLP} such that all reflection
centers~$\dcenter_i$, $i \in [n]$, are at~$0$.
This can be achieved by replacing all variables~$x_i$ by~$x_i -
\dcenter_i$, as already indicated by the relative bounds of variable
type~$\type(x_i)$.
To still represent the original MINLP, we apply two modifications:
\begin{enumerate*}[label=(\roman*)]
\item  set the lower and upper bounds of~$x_i$ to~$\ell_i - \dcenter_i$
  and~$u_i - \dcenter_i$, respectively, and
\item replace in~$T$ every sub expression tree representing an
  expression~$\alpha \cdot x_i$ for some $\alpha \in \R$ by an expression
  tree for the expression~$\alpha \cdot x_i + \alpha \cdot \dcenter_i$.
\end{enumerate*}
Second, observe that reflecting the expression~$\alpha \cdot x_i$ for
some~$\alpha \in \R$, results in~$\alpha\cdot(2 \dcenter_i - x_i)$.
That is, the sign of the coefficient of~$x_i$ changes.
To easily model this in an \SDG, we encode variable coefficients as edge
colors.
More concretely, suppose~$T$ has an operator node~$v$ that corresponds to
a multiplication operation and that has exactly two children.
Whenever one child~$v_1$ is a variable and the other child~$v_2$ is a
numerical value, we remove~$v_2$ from~$T$ and assign its value to the arc
connecting~$v$ and~$v_1$, see Figure~\ref{fig:illustratePreprocessing} for an illustration of both
operations.

\begin{figure}[t]
  \centering

  \subfigure[Expression tree.]
  {
    \begin{tikzpicture}[scale=0.5, every node/.style={scale=0.7}]
      \tikzstyle{v} += [circle,inner sep=2pt,minimum size=2mm,draw=black];

      \node (r) at (0,0) [v,label=above:$+$] {};
      \node (p1) at (-2.5,-0.75) [v,label=left:$\times$] {};
      \node (p2) at (-1.0,-0.75) [v,label=left:$\times$] {};
      \node (p3) at (1.0,-0.75) [v,label=left:$\times$] {};
      \node (p4) at (2.5,-0.75) [v,label=right:$\times$] {};

      \draw[->] (r) -- (p1);
      \draw[->] (r) -- (p2);
      \draw[->] (r) -- (p3);
      \draw[->] (r) -- (p4);

      \node (p11) at (-3.0,-1.5) [v,label=below:$4$] {};
      \node (p12) at (-2.0,-1.5) [v,label=below:$x_1$] {};
      \node (p21) at (-1.5,-1.5) [v,label=below:$-4$] {};
      \node (p22) at (-0.5,-1.5) [v,label=below:$x_2$] {}; 
      \node (p31) at (0.5,-1.5) [v,label=below:$1$] {};
      \node (p32) at (1.5,-1.5) [v,label=below:$x_3$] {};
      \node (p41) at (2.0,-1.5) [v,label=below:$-1$] {};
      \node (p42) at (3.0,-1.5) [v,label=below:$x_4$] {};

      \draw[->] (p1) -- (p11);
      \draw[->] (p1) -- (p12);
      \draw[->] (p2) -- (p21);
      \draw[->] (p2) -- (p22);
      \draw[->] (p3) -- (p31);
      \draw[->] (p3) -- (p32);
      \draw[->] (p4) -- (p41);
      \draw[->] (p4) -- (p42);
    \end{tikzpicture}
  }
  \quad
  \subfigure[Tree with shifted domains.]
  {
    \begin{tikzpicture}[scale=0.5, every node/.style={scale=0.7}]
      \tikzstyle{v} += [circle,inner sep=2pt,minimum size=2mm,draw=black];

      \node (r) at (0,0) [v,label=above:$+$] {};
      \node (p1) at (-2.5,-0.75) [v,label=left:$\times$] {};
      \node (p2) at (-1.0,-0.75) [v,label=left:$\times$] {};
      \node (p3) at (1.0,-0.75) [v,label=left:$+$] {};
      \node (p4) at (2.5,-0.75) [v,label=right:$+$] {};

      \draw[->] (r) -- (p1);
      \draw[->] (r) -- (p2);
      \draw[->] (r) -- (p3);
      \draw[->] (r) -- (p4);

      \node (p11) at (-3.0,-1.5) [v,label=below:$4$] {};
      \node (p12) at (-2.0,-1.5) [v,label=below:$x_1$] {};
      \node (p21) at (-1.5,-1.5) [v,label=below:$-4$] {};
      \node (p22) at (-0.5,-1.5) [v,label=below:$x_2$] {}; 
      \node (p31) at (0.5,-1.5)  [v,label=below:$2$] {};
      \node (p32) at (1.5,-1.5)  [v,label=left:$\times$] {};
      \node (p41) at (2.0,-1.5)  [v,label=below:$1$] {};
      \node (p42) at (3.0,-1.5)  [v,label=right:$\times$] {};

      \draw[->] (p1) -- (p11);
      \draw[->] (p1) -- (p12);
      \draw[->] (p2) -- (p21);
      \draw[->] (p2) -- (p22);
      \draw[->] (p3) -- (p31);
      \draw[->] (p3) -- (p32);
      \draw[->] (p4) -- (p41);
      \draw[->] (p4) -- (p42);

      \node (q1) at (1.0,-2.5) [v,label=below:$1$] {};
      \node (q2) at (1.5,-2.5) [v,label=below:$x_3$] {};
      \node (q3) at (2.5,-2.5) [v,label=below:$-1$] {};
      \node (q4) at (3.0,-2.5) [v,label=below:$x_4$] {};

      \draw[->] (p32) -- (q1);
      \draw[->] (p32) -- (q2);
      \draw[->] (p42) -- (q3);
      \draw[->] (p42) -- (q4);
    \end{tikzpicture}
  }
  \quad
  \subfigure[Tree with labeled edges (indicated by colors).]
  {
    \begin{tikzpicture}[scale=0.5, every node/.style={scale=0.7}]
      \tikzstyle{v} += [circle,inner sep=2pt,minimum size=2mm,draw=black];
      \node (r) at (0,0) [v,label=above:$+$] {};
      \node (p1) at (-2.5,-0.75) [v,label=left:$x_1$] {};
      \node (p2) at (-1.0,-0.75) [v,label=left:$x_2$] {};
      \node (p3) at (1.0,-0.75) [v,label=left:$+$] {};
      \node (p4) at (2.5,-0.75) [v,label=right:$+$] {};

      \draw[->,thick,draw=red] (r) -- (p1);
      \draw[->,thick,draw=orange] (r) -- (p2);
      \draw[->] (r) -- (p3);
      \draw[->] (r) -- (p4);

      \node (p31) at (0.5,-1.5)  [v,label=below:$2$] {};
      \node (p32) at (1.5,-1.5)  [v,label=below:$x_3$] {};
      \node (p41) at (2.0,-1.5)  [v,label=below:$1$] {};
      \node (p42) at (3.0,-1.5)  [v,label=below:$x_4$] {};

      \draw[->] (p3) -- (p31);
      \draw[->,thick,draw=blue] (p3) -- (p32);
      \draw[->] (p4) -- (p41);
      \draw[->,thick,draw=cyan] (p4) -- (p42);
    \end{tikzpicture}
  } 
 
  \caption{Illustration of the two preprocessing modifications of
    expression tree for~\eqref{eq:exampleSDG}.}
  \label{fig:illustratePreprocessing}
\end{figure}
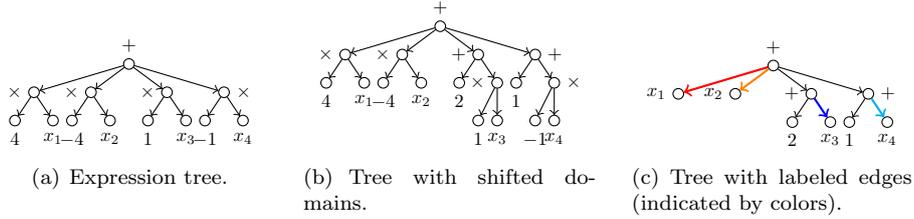

In the following, we assume that all expression trees incorporate the two
previously described modifications.
To construct colors for \SDGs, let~$\values'$ be the set of all
\emph{numerical values} used in the expression trees of MINLP~$P$, and
define~$\values = \{ \pm v : v \in \values'\}$.
Let~$\operators$ be the set of all \emph{operators} appearing in the expression
trees, and let~$\anchor$ be a symbol representing the anchor of a graph.
Moreover, recall that~$\variables$ denotes the set of all types of
variables in~$P$.
Then, $\colors = \values \cup \operators \cup \variables \cup \{\anchor\}$
contains all types of nodes needed to build an \SDG.
We refer to~$\colors$ as \emph{colors}, i.e., we associate a color to
each node based on its type.

Our construction of an \SDG closely follows~\cite{Liberti2012a}.
We derive an undirected copy of the expression tree of constraint~$k$,
denoted~$G'_k = (V'_k, E'_k)$.
Each node in~$V'_k$ receives the color from~$\colors$ corresponding to
its type.
If an arc has received a value due to preprocessing, we color the
corresponding edge in~$E'_k$ by the corresponding color in~$\colors$.
Each uncolored edge incident with a variable node receives
the color of the numerical value~1.
Finally, we add an anchor node~$a$ to~$G'_k$ and replace nodes corresponding
to variables by a gadget.
The anchor node~$a$ receives color~$\anchor$ and is connected via an edge
with the root node of the expression tree.
To represent variable~$x_i$, $i \in [n]$, we introduce two distinguished
nodes~$v^k_i$ and~$v^k_{-i}$ that are colored by~$\type(x_i)$
and~$\type(x_{-i})$, respectively.
Both nodes are connected by an edge~$\{v^k_i, v^k_{-i}\}$.
Furthermore, every edge~$\{v,v_i\} \in E'_k$ with color~$c \in \values$
that is incident to a node~$v_i$ representing variable~$x_i$ is
replaced by two edges~$\{v,v^k_i\}$ and~$\{v,v^k_{-i}\}$ with color~$c$
and~$-c$, respectively.
We denote the resulting graph by~$G_k$, see Figure~\ref{fig:SDG} for an
illustration.
Note that all variable nodes are equally colored as all shifted
variable domains are identical in our example.

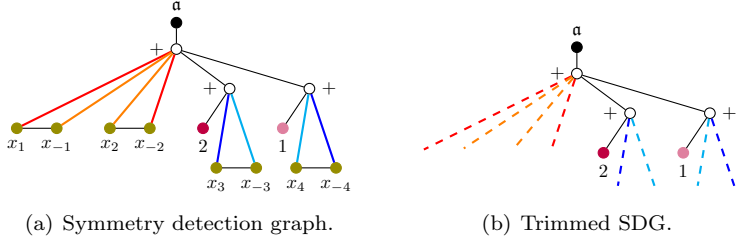
\begin{figure}[t]
  \centering
  
  \subfigure[Symmetry detection graph.\label{fig:SDG}]
  {
    \begin{tikzpicture}[scale=0.7, every node/.style={scale=0.7}]
      \tikzstyle{v} += [circle,inner sep=2pt,minimum size=2mm,draw=black];
      \node (a) at (0,0.5) [v,label=above:$\anchor$,fill=black] {};
      \node (r) at (0,0) [v,label=left:$+$] {};

      \node (p1a) at (-3,-1.5) [v,label=below:$x_1$,fill=olive,draw=olive] {};
      \node (p1b) at (-2.25,-1.5) [v,label=below:$x_{-1}$,fill=olive,draw=olive] {};
      \node (p2a) at (-1.25,-1.5) [v,label=below:$x_2$,fill=olive,draw=olive] {};
      \node (p2b) at (-0.5,-1.5) [v,label=below:$x_{-2}$,fill=olive,draw=olive] {};
      \node (p3) at (1.0,-0.75) [v,label=left:$+$] {};
      \node (p4) at (2.5,-0.75) [v,label=right:$+$] {};

      \draw[-] (r) -- (a);
      \draw[-,thick,draw=red] (r) -- (p1a);
      \draw[-,thick,draw=orange] (r) -- (p1b);
      \draw[-,thick,draw=red] (r) -- (p2b);
      \draw[-,thick,draw=orange] (r) -- (p2a);
      \draw[-] (r) -- (p3);
      \draw[-] (r) -- (p4);
      \draw[-] (p1a) -- (p1b);
      \draw[-] (p2a) -- (p2b);

      \node (p31) at (0.5,-1.5)  [v,label=below:$2$,fill=purple,draw=purple] {};
      \node (p41) at (2.0,-1.5)  [v,label=below:$1$,fill=purple!50,draw=purple!50] {};

      \node (p32a) at (0.75,-2.25) [v,label=below:$x_3$,fill=olive,draw=olive] {};
      \node (p32b) at (1.5,-2.25) [v,label=below:$x_{-3}$,fill=olive,draw=olive] {};
      \node (p42a) at (2.25,-2.25) [v,label=below:$x_4$,fill=olive,draw=olive] {};
      \node (p42b) at (3.0,-2.25) [v,label=below:$x_{-4}$,fill=olive,draw=olive] {};

      \draw[-] (p3) -- (p31);
      \draw[-] (p4) -- (p41);
      \draw[-,thick,draw=blue] (p3) -- (p32a);
      \draw[-,thick,draw=cyan] (p3) -- (p32b);
      \draw[-,thick,draw=cyan] (p4) -- (p42a);
      \draw[-,thick,draw=blue] (p4) -- (p42b);
      \draw[-] (p32a) -- (p32b);
      \draw[-] (p42a) -- (p42b);
    \end{tikzpicture}
  } 
  \quad
  \subfigure[Trimmed \SDG.\label{fig:SDGtrimmed}]
  {
    \begin{tikzpicture}[scale=0.7, every node/.style={scale=0.7}]
      \tikzstyle{v} += [circle,inner sep=2pt,minimum size=2mm,draw=black];
      \node (a) at (0,0.5) [v,label=above:$\anchor$,fill=black] {};
      \node (r) at (0,0) [v,label=left:$+$] {};

      \node (p1a) at (-3,-1.5)  {};
      \node (p1b) at (-2.25,-1.5)  {};
      \node (p2a) at (-1.25,-1.5)  {};
      \node (p2b) at (-0.5,-1.5)  {};
      \node (p3) at (1.0,-0.75) [v,label=left:$+$] {};
      \node (p4) at (2.5,-0.75) [v,label=right:$+$] {};

      \draw[-] (r) -- (a);
      \draw[-,thick,draw=red,dashed] (r) -- (p1a);
      \draw[-,thick,draw=orange,dashed] (r) -- (p1b);
      \draw[-,thick,draw=red,dashed] (r) -- (p2b);
      \draw[-,thick,draw=orange,dashed] (r) -- (p2a);
      \draw[-] (r) -- (p3);
      \draw[-] (r) -- (p4);

      \node (p31) at (0.5,-1.5)  [v,label=below:$2$,fill=purple,draw=purple] {};
      \node (p41) at (2.0,-1.5)  [v,label=below:$1$,fill=purple!50,draw=purple!50] {};

      \node (p32a) at (0.75,-2.25) {};
      \node (p32b) at (1.5,-2.25) {};
      \node (p42a) at (2.25,-2.25) {};
      \node (p42b) at (3.0,-2.25) {};

      \draw[-] (p3) -- (p31);
      \draw[-] (p4) -- (p41);
      \draw[-,thick,draw=blue,dashed] (p3) -- (p32a);
      \draw[-,thick,draw=cyan,dashed] (p3) -- (p32b);
      \draw[-,thick,draw=cyan,dashed] (p4) -- (p42a);
      \draw[-,thick,draw=blue,dashed] (p4) -- (p42b);
    \end{tikzpicture}
  } 

  \caption{Illustration of an \SDG and the corresponding trimmed \SDG for~\eqref{eq:exampleSDG}.}
\end{figure}
\begin{proposition}
  \label{prop:basicSDG}
  Let~$P$ be an MINLP with~$m$ constraints represented by expression
  trees~$T_1,\dots,T_m$ all of whose operators are commutative.
  For~$k \in [m]$, let~$G_k = (V_k,E_k)$ be the
  colored graph derived from~$T_k$ as described above.
  Then, for every~$k \in [m]$, $G_k$ is an anchored \SDG for~$P_k$.
  Moreover, $G_1,\dots,G_m$ are variable color and constraint compatible.
\end{proposition}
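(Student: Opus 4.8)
The plan is to verify the four assertions of the proposition in turn: that each~$G_k$ is anchored, that the family~$G_1,\dots,G_m$ is variable color compatible, that it is constraint compatible, and that each~$G_k$ is an \SDG for~$P_k$, i.e.\ that every color-preserving automorphism~$\perm$ of~$G_k$ satisfies the three properties of Definition~\ref{def:symdetectgraph}. The first three assertions are mostly bookkeeping. For the anchored property I would exhibit the anchor~$a$ added in the construction: since~$a$ is joined to the root of the undirected expression tree, every operator and value node is reachable from the root through tree edges, and each gadget node~$v^k_i,v^k_{-i}$ is adjacent to the operator node that was its parent in~$T_k$; hence every non-anchor node is reachable from~$a$ by a path whose interior nodes are operator or value nodes, none of which is distinguished. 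Variable color compatibility is immediate from the coloring rule, because~$v^k_i$ carries color~$\type(x_i)$ while all remaining nodes carry colors from~$\operators\cup\values\cup\{\anchor\}$, which is disjoint from~$\variables$. For constraint compatibility I would argue that a colored-graph isomorphism~$G_k\to G_{k'}$ restricts to an isomorphism of the underlying doubled expression trees preserving operators, numerical values, the edge colors encoding coefficients, and variable types; reading off how the distinguished variable nodes are paired yields a permutation~$\perm\in\sym{n}$ with~$g_k(x)=g_{k'}(\perm(x))$, so~$g_k\equiv g_{k'}$.

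Turning to the \SDG properties themselves, Property~1 follows from variable color compatibility, since the distinguished nodes are exactly the nodes colored from~$\variables$ and a color-preserving automorphism preserves this set. Property~2 I would obtain from the pairing edges: the edges~$\{v^k_i,v^k_{-i}\}$ are the only edges of~$G_k$ having both endpoints distinguished, so~$\perm$ maps pairing edges to pairing edges, and~$\perm(v^k_i)=v^k_j$ forces the partner of~$v^k_i$ to be sent to the partner of~$v^k_j$, that is,~$\perm(v^k_{-i})=v^k_{-j}$.

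The heart of the proof is Property~3. Because the preprocessing places every reflection center at the origin, I would first record that a reflection symmetry of the shifted constraint is, by conjugation with the translation~$x\mapsto x-\dcenter$, equivalent to one of~$P_k$; on the shifted variables the reflection reduces to~$\refl(x;\sperm)_i=\sign(\invsperm(i))\,x_{\abs{\invsperm(i)}}$. It then suffices to show functional invariance of~$g_k$ under this substitution together with invariance of the objective and of the bounds. For the functional invariance I would run a bottom-up induction on the expression tree: assigning to each~$v^k_i$ the value~$x_i$ and to~$v^k_{-i}$ the value~$-x_i$, and using that~$\perm$ preserves operator-node colors, value-node colors, and the coefficient colors on the split edges (where~$v^k_i$ and~$v^k_{-i}$ carry opposite signs), one shows node by node that the tree evaluated through~$\perm$ computes~$g_k$ at~$\refl(x;\sperm)$ whenever the original evaluation computes~$g_k(x)$. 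The objective identity~$\sprod{c}{\refl(x;\sperm)}=\sprod{c}{x}$ comes from the fact that~$c_i$ is part of~$\type(x_i)$ while~$\type(x_{-i})$ records~$-c_i$, so color preservation gives~$\type(x_i)=\type(x_{\invsperm(i)})$ and hence the coefficient bookkeeping~$c_i\sign(\invsperm(i))=c_{\abs{\invsperm(i)}}$; summing over the underlying permutation~$i\mapsto\abs{\invsperm(i)}$ returns~$\sprod{c}{x}$. Feasibility of the box and integrality constraints follows analogously, since the relative bounds~$\ell_i-\dcenter_i,u_i-\dcenter_i$ and the flag~$i\in\cI$ are encoded in~$\type(x_i)$ and are swapped/negated in~$\type(x_{-i})$, so a reflection maps each bound and integrality requirement onto one of~$P_k$.

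The main obstacle I anticipate is making the bottom-up induction for functional invariance fully rigorous: one must handle variables occurring several times in~$T_k$, so that several operator nodes attach to the same gadget, and one must keep careful track of the sign bookkeeping forced by the opposite edge colors on~$\{v,v^k_i\}$ and~$\{v,v^k_{-i}\}$, ensuring that a mapping~$\perm(v^k_i)=v^k_{-j}$ is exactly what realizes the reflection~$x_j\mapsto-x_j$ inside~$g_k$. The commutativity assumption on the operators is what lets me disregard the ordering of children throughout; without it I would additionally have to respect the arc labels recording input order, which is the refinement mentioned earlier in the paper.
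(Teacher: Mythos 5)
Your proposal is correct and follows essentially the same route as the paper's proof: the anchor/compatibility checks are handled identically, Properties 1 and 2 come from the variable colors and the pairing edges exactly as in the paper, and your bottom-up induction with the sign bookkeeping on the oppositely colored edges $\{v,v^k_i\}$, $\{v,v^k_{-i}\}$ is precisely the paper's argument, which it formalizes by trimming the variable nodes and comparing the resulting ``pre-functions'' under inputs $x$ and $\sperm(x)$. Your explicit treatment of the objective and bounds via the variable types is a welcome completion of a step the paper leaves implicit.
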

\begin{proof}
  For every~$k \in [m]$, graph~$G_k$ is anchored since each variable node
  of~$T_k$ is a leaf and the anchor node of~$G_k$ is connected with the
  root of~$T_k$.
  Moreover, $G_1,\dots,G_m$ are constraint compatible since they are
  essentially copies of the corresponding expression trees.
  The graphs~$G_1,\dots,G_m$ are also variable color compatible, because
  variable nodes are colored according to their type.
  It thus suffices to show that each graph~$G_k$, $k \in [m]$, is an \SDG.

  The first two properties of \SDGs follow immediately from coloring the
  distinguished nodes by the corresponding variables' types and the
  only edges connecting the distinguished nodes being~$\{v^k_i, v^k_{-i}\}$,
  $i \in [n]$.
  Regarding the third property, observe that expression
  tree~$T_k$ is almost identical to~$G_k$.
  The main differences are
  \begin{enumerate*}[label={(\roman*)}, ref={(\roman*)}]
  \item the anchor node in~$G_k$ has no counterpart in~$T_k$;
  \item\label{detectProofB} there are possibly multiple copies of variable
    nodes in~$T_k$, whereas these nodes are identified in~$G_k$ via the
    distinguished nodes;
  \item $G_k$ also contains nodes for~$x_{-1},\dots,x_{-n}$.
  \end{enumerate*}
  We now argue that these differences ensure that~$G_k$ is an \SDG.
  To this end, let~$\bar{T}_k$ be the graph arising from~$T_k$ by
  removing all variable nodes, see Figure~\ref{fig:SDGtrimmed}.
  As for expression trees, we can associate with~$\bar{T}_k$ an
  arithmetic expression in which the input of some operators is not fully
  specified.
  We refer to such an expression as a \emph{pre-function}.

  Let~$\perm$ be an automorphism of~$G_k$ and let~$\sperm$ be the signed
  permutation associated with~$\perm$ as defined in
  Definition~\ref{def:symdetectgraph}.
  Let~$\bar{\perm}$ be the restriction of~$\perm$ onto~$\bar{T}_k$.
  Then, $\bar{\perm}$ is well-defined since every node in~$\bar{T}_k$ has a
  unique counterpart in~$G_k$ (there are no variable nodes in~$\bar{T}_k$).
  We can thus interpret~$\bar{T}_k$ as an induced subgraph of~$G_k$.
  Since the anchor of~$G_k$ is the only node colored by~$\anchor$ and
  the anchor is only connected with~$T_k$'s root, $\bar{\perm}$ is an
  automorphism of~$\bar{T}_k$ that keeps the root invariant.
  The pre-functions associated with~$\perm(\bar{T}_k)$ and~$\bar{T}_k$ are
  hence the same.
  To conclude the proof, we show that inserting~$x$
  and~$\sperm(x)$ into the pre-functions for~$\bar{T}_k$
  and~$\perm(\bar{T}_k)$, respectively, yields the same function.

  Let~$v$ be an operator node of~$T_k$ that has variable~$x_i$, $i \in
  [n]$, as child, and let~$\alpha$ be the numerical value assigned to
  arc~$(v,x_i)$.
  That is, $\alpha \cdot x_i$ is input of the operator associated with~$v$.
  In~$G_k$, arc~$(v,x_i)$ corresponds to the edge~$\{v, v^k_i\}$ with
  color~$\alpha$.
  Due to the coloring of~$G_k$ and since~$\perm$ is an automorphism
  of~$G_k$, there exists~$j \in [\pm n]$ such that~$\perm(v^k_i) = v^k_j$.
  Because of the same reasons, $\perm(v)$ corresponds to an
  operator of the same type as~$v$ and~$\{\perm(v), v^k_j\}$ has
  color~$\alpha$.
  Thus, operator~$v$ in~$T_k$ has input~$\alpha \cdot x_i$ and
  operator~$\perm(v)$ in~$\perm(T_k)$ has input
  \[
    \alpha \cdot x_{\sign(i)\abs{i}}
    =
    \sign(i)\alpha \cdot x_{\abs{i}}
    =
    \alpha \cdot \sperm(x)_j,
  \]
  where the first equality holds due to assigning edges~$\{v,v^k_j\}$
  and~$\{v,v^k_{-j}\}$ negated values/colors.
  Consequently, since we assumed that all operators are commutative,
  assigning the pre-function of~$\bar{T}_k$ input~$x$ and the pre-function
  of~$\perm(\bar{T}_k)$ input~$\sperm(x)$ yields the same function.
  \end{proof}
\begin{remark}
  Proposition~\ref{prop:basicSDG} can easily be generalized to expression
  trees involving non-commu\-ta\-tive operators.
  As mentioned in Section~\ref{sec:existingdetection}, the arcs leaving an
  operator node of such an expression tree need to be labeled to indicate
  the order in which the input of an operator needs to be processed.
  These labels then need to be incorporated into the edge colors of an
  \SDG.
\end{remark}
We close this section by evaluating the the capabilities of the \SDG of
Proposition~\ref{prop:basicSDG} for
example~\eqref{eq:exampleSDG}.
The reflection symmetries of~\eqref{eq:exampleSDG} are the
signed permutations~$\sperm_1 = (1,-2)(2,-1)$, $\sperm_2
= (3,-4)(4,-3)$, $\sperm_3 = \sperm_2 \circ \sperm_1$, and~$\sperm_4 =
\id$.
The only signed permutations that can be derived from automorphisms of the
\SDG in Figure~\ref{fig:SDG}, however, are~$\sperm_1$ and~$\sperm_4$.
That is, even for linear expressions, the \SDG might not allow to detect all
reflection symmetries.
This undesired behavior will be investigated in more detail in the next
section, where we will modify the \SDG to detect all reflection symmetries
of linear expressions.
Moreover, the modifications will also allow to detect the natural
reflection symmetries of the disk packing problem from
Example~\ref{ex:geompack}.

\subsection{Enhancements of the Framework}
\label{sec:enhanceddetection}

Although \SDGs are defined in terms of relative variable domains, the \SDG
of Proposition~\ref{prop:basicSDG} for the exemplary
MINLP~\eqref{eq:exampleSDG} does not encode the
reflection symmetry between~$x_3$ and~$x_{-4}$.
The reason is that our proposed modifications of expression trees
explicitly encode the original reflection centers in the expression trees.
Variables with different reflection centers consequently
cannot be symmetric in the \SDG, cf.\ Figure~\ref{fig:SDG}.
In this section, we discuss small modifications of the proposed \SDGs,
which allow to detect reflection symmetries of sum expressions,
squared differences, bilinear products, and even operators.
This list of examples is, of course, not exhaustive and only serves as an
illustration of how the abstract concept of symmetry detection graphs can
be used to derive tailored \SDGs incorporating symmetry information of
particular types of constraints.

\paragraph{Sum Expressions.}
A simple way to resolve the issue for
MINLP~\eqref{eq:exampleSDG} is to not consider the summands of a sum
expression independently.
To make this precise, let~$I \subseteq [n]$ and~$S = \sum_{i \in
  I} \alpha_i \cdot x_i$, where~$\alpha_i \in \R$ for all~$i \in I$.
To center each variable at the origin, the last section proposed to modify
an expression tree by replacing each summand~$\alpha_i \cdot x_i$ by a sub
expression tree for~$\alpha_i\cdot x_i + \alpha_i \cdot \dcenter_i$.
Alternatively, we can compute the expression~$S' = \sum_{i \in I} \alpha_i \cdot
\dcenter_i + \sum_{i \in I} \alpha_i \cdot x_i$ first and create the
expression tree for~$S'$, see Figure~\ref{fig:enhancedSDG}.
Since both the modified expression tree~$T$ from the previous section and the
tree~$T'$ described here model the same function, automorphisms of the \SDG
corresponding to~$T'$ also correspond to reflection symmetries of~\eqref{eq:MINLP}.
In particular, because the \SDG for~$S'$ essentially corresponds to the
\SDGs from~\cite{Bodi2013} for linear constraints, all reflection
symmetries of~$S$ are encoded in the \SDG, see~\cite[Thm.~4]{Bodi2013}.

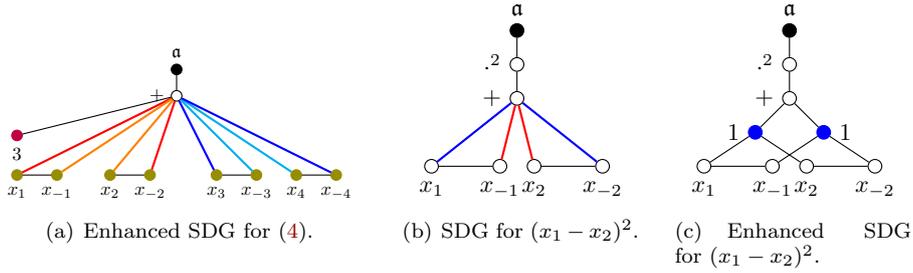
\begin{figure}[t]
  \centering
  
  \subfigure[Enhanced \SDG for~\eqref{eq:exampleSDG}.\label{fig:enhancedSDG}]
  {
    \begin{tikzpicture}[scale=0.7, every node/.style={scale=0.7}]
      \tikzstyle{v} += [circle,inner sep=2pt,minimum size=2mm,draw=black];
      \node (a) at (0,0.5) [v,label=above:$\anchor$,fill=black] {};
      \node (r) at (0,0) [v,label=left:$+$] {};

      \node (p1a) at (-3,-1.5) [v,label=below:$x_1$,fill=olive,draw=olive] {};
      \node (p1b) at (-2.25,-1.5) [v,label=below:$x_{-1}$,fill=olive,draw=olive] {};
      \node (p2a) at (-1.25,-1.5) [v,label=below:$x_2$,fill=olive,draw=olive] {};
      \node (p2b) at (-0.5,-1.5) [v,label=below:$x_{-2}$,fill=olive,draw=olive] {};
      \node (p3a) at (0.75,-1.5) [v,label=below:$x_3$,fill=olive,draw=olive] {};
      \node (p3b) at (1.5,-1.5) [v,label=below:$x_{-3}$,fill=olive,draw=olive] {};
      \node (p4a) at (2.25,-1.5) [v,label=below:$x_4$,fill=olive,draw=olive] {};
      \node (p4b) at (3.0,-1.5) [v,label=below:$x_{-4}$,fill=olive,draw=olive] {};
      \node (p5) at (-3,-0.75) [v,label=below:$3$,fill=purple,draw=purple] {};

      \draw[-] (r) -- (a);
      \draw[-,thick,draw=red] (r) -- (p1a);
      \draw[-,thick,draw=orange] (r) -- (p1b);
      \draw[-,thick,draw=red] (r) -- (p2b);
      \draw[-,thick,draw=orange] (r) -- (p2a);
      \draw[-,thick,draw=blue] (r) -- (p3a);
      \draw[-,thick,draw=cyan] (r) -- (p3b);
      \draw[-,thick,draw=cyan] (r) -- (p4a);
      \draw[-,thick,draw=blue] (r) -- (p4b);
      \draw[-] (r) -- (p5);

      \draw[-] (p1a) -- (p1b);
      \draw[-] (p2a) -- (p2b);
      \draw[-] (p3a) -- (p3b);
      \draw[-] (p4a) -- (p4b);
    \end{tikzpicture}
  } 
  \quad
  \subfigure[\SDG for~$(x_1 - x_2)^2$.\label{fig:sqdiff}]
  {
    \begin{tikzpicture}[scale=0.9, every node/.style={scale=0.9}]
      \tikzstyle{v} += [circle,inner sep=2pt,minimum size=2mm,draw=black];
      \node (a) at (0,0.5) [v,label=above:$\anchor$,fill=black] {};
      \node (r) at (0,0) [v,label=left:$\cdot^2$] {};

      \node (p) at (0,-0.5) [v,label=left:$+$] {};

      \draw[-] (a) -- (r);
      \draw[-] (r) -- (p);

      \node (x1) at (-1.25,-1.5) [v,label=below:$x_1$] {};
      \node (x1a) at (-0.25,-1.5) [v,label=below:$x_{-1}$] {};
      \node (x2) at (0.25,-1.5) [v,label=below:$x_2$] {};
      \node (x2a) at (1.25,-1.5) [v,label=below:$x_{-2}$] {};

      \draw[-,thick,draw=blue] (p) -- (x1);
      \draw[-,thick,draw=red] (p) -- (x1a);
      \draw[-,thick,draw=red] (p) -- (x2);
      \draw[-,thick,draw=blue] (p) -- (x2a);
      \draw[-] (x1) -- (x1a);
      \draw[-] (x2) -- (x2a);
    \end{tikzpicture}
  }
  \quad
  \subfigure[Enhanced \SDG for~$(x_1 - x_2)^2$.\label{fig:enhancesqdiff}]
  {
    \begin{tikzpicture}[scale=0.9, every node/.style={scale=0.9}]
      \tikzstyle{v} += [circle,inner sep=2pt,minimum size=2mm,draw=black];
      \node (a) at (0,0.5) [v,label=above:$\anchor$,fill=black] {};
      \node (r) at (0,0) [v,label=left:$\cdot^2$] {};

      \node (p) at (0,-0.5) [v,label=left:$+$] {};

      \draw[-] (a) -- (r);
      \draw[-] (r) -- (p);

      \node (aux1) at (-0.5,-1) [v,label=left:1,fill=blue,draw=blue] {};
      \node (aux2) at (0.5,-1) [v,label=right:1,fill=blue,draw=blue] {};

      \draw[-] (p) -- (aux1);
      \draw[-] (p) -- (aux2);

      \node (x1) at (-1.25,-1.5) [v,label=below:$x_1$] {};
      \node (x1a) at (-0.25,-1.5) [v,label=below:$x_{-1}$] {};
      \node (x2) at (0.25,-1.5) [v,label=below:$x_2$] {};
      \node (x2a) at (1.25,-1.5) [v,label=below:$x_{-2}$] {};

      \draw[-] (x1) -- (x1a);
      \draw[-] (x2) -- (x2a);
      \draw[-] (aux1) -- (x1);
      \draw[-] (aux1) -- (x2);
      \draw[-] (aux2) -- (x1a);
      \draw[-] (aux2) -- (x2a);
    \end{tikzpicture}
  }
  \caption{Illustration of a enhanced \SDGs.}
\end{figure}

\paragraph{Squared Differences.}
Consider Example~\ref{ex:geompack} for two disks.
Incorporating the previously discussed idea for sum expressions into
\SDGs allows to detect the symmetries~$(x_1,x_2) \mapsto
(-x_2,-x_1)$ and~$(y_1,y_2) \mapsto (-y_2,-y_1)$ even when the box is not
centered at the origin.
The reflection along the horizontal and vertical symmetry axes of the box,
however, cannot be detected via the \SDG.
The reason is that the \SDG does not encode that~$(x_1 - x_2)^2$ is the
same as~$(-x_1 - (-x_2))^2$ since the corresponding expression trees are
different, see Figure~\ref{fig:sqdiff} for an \SDG of this expression.

To incorporate this information into an \SDG, we replace the
subgraph in the \SDG corresponding to the sub expression tree for~$(x_1 -
x_2)^2$ by a gadget.
For this gadget, we introduce a binary \emph{squared difference}
operator~$\sqdiff$ modeling~$\sqdiff(x_1,x_2) = (x_1 - x_2)^2$.
The gadget then consists of an operator node~$v$ of type~$\sqdiff$, two
auxiliary value nodes~$a_1$ and~$a_2$ both receiving color~1, and the
distinguished nodes~$v_1, v_{-1}, v_2, v_{-2}$ for~$x_1,x_{-1},x_2,
x_{-2}$.
The gadget's edges all remain uncolored and are given by~$\{v,a_1\}$,
$\{v,a_2\}$, $\{a_1,v_1\}$, $\{a_1,v_2\}$, $\{a_2,v_{-1}\}$,
and~$\{a_2,v_{-2}\}$, see Figure~\ref{fig:enhancesqdiff}.

Since every \SDG adds the edges~$\{v_1,v_{-1}\}$ and~$\{v_2,v_{-2}\}$, note
that any automorphism of the gadget that exchanges~$v_1$ and~$v_2$ also
needs to exchange~$v_{-1}$ and~$v_{-2}$.
Hence, the automorphism group of this gadget is generated (provided all
variables have the same type) by the permutations~$\perm_1 =
(v_1,v_2)(v_{-1},v_{-2})$ and~$\perm_2 =
(a_1,a_2)(v_1,v_{-1})(v_2,v_{-2})$.
The former corresponds to the permutation symmetry of
Example~\ref{ex:geompack} that exchanges identical disks; the latter
corresponds to the reflection along the vertical symmetry axis of the box
(as we only consider~$x$-variables).
That is, all reflection symmetries of Example~\ref{ex:geompack} can be
detected via this gadget.

Note that the introduction of the new operator type indeed ensures that
every automorphism of the \SDG corresponds to a reflection symmetry of the
corresponding MINLP, since the squared difference operator serves as an
anchor of the gadget.
In the following, we will generalize this idea to other structures
frequently arising in MINLP.
We only need to ensure that each newly introduced gadget is anchored
and that there is a unique operator type identifying the gadget.
The former ensures that the entire \SDG remains anchored if a gadget
replaces a sub expression tree, whereas the latter guarantees constraint
compatibility among different \SDGs.

\paragraph{Bilinear Products.}
Let~$i,j \in [n]$ be distinct and consider the bilinear product~$x_i \cdot
x_j$.
If the reflection center of both~$x_i$ and~$x_j$ is at the origin and both
variables have the same type, the product admits two symmetries:
either one exchanges~$x_i$ and~$x_j$, or one simultaneously replaces~$x_i$
by~$x_{-i}$ and~$x_j$ by~$x_{-j}$.
These symmetries are exactly the same symmetries as for the case of squared
differences.
For this reason, the symmetries can be encoded using an analogous gadget;
the only difference is that the $\sqdiff$ operator node needs to be
replaced by the product operator.

\paragraph{Even Functions.}
Let~$i \in [n]$ and let~$f\colon \R \to \R$ be a univariate function such
that~$f(x_i)$ arises as a subexpression in~\eqref{eq:MINLP}.
If~$f$ is an even function, then~$x_i \mapsto -x_i$ is a potential symmetry
of~\eqref{eq:MINLP} as~$f(x_i) = f(-x_i)$.
This property of even functions can easily be incorporated into \SDGs.
Instead of connecting the operator node corresponding to~$f$ with the
distinguished nodes~$v_i$ and~$v_{-i}$ by two edges being colored by the
numerical values~$1$ and~$-1$, respectively, we assign both edges color~1.

The idea for even functions can be generalized, of course, if the input
of~$f$ is a more complicated expression~$E$.
In this case, nodes~$v_i$ and~$v_{-i}$ must be replaced by the root nodes
of graphs representing expression trees for~$E$ and~$-E$, respectively.
Depending on the size of~$E$, this could increase the size of the \SDGs
significantly though, and thus also increases the time needed to detect
symmetries.

\section{An Open-Source Implementation of the Abstract Framework}
\label{sec:implementation}

In this section, we describe our implementation of the abstract symmetry
detection framework of the previous section within the open-source solver
\scip.
Our framework is contained as a \code{C}-implementation in the release of
\scip~9.0~\cite{bolusani2024scip} and replaces \scip's previous symmetry
detection mechanism.
We start by explaining the design principles of our implementation
(Section~\ref{sec:design}), followed by an illustration of how to use our
framework within \scip (Section~\ref{sec:illustrateFramework}).
The section is concluded in Section~\ref{sec:technicalDetails} by providing
further technical details and a discussion of how to detect symmetries of
\SDGs.
Appendix~\ref{app:functions} provides an overview of the most
important functions needed to apply our framework.
\subsection{Design Principles}
\label{sec:design}
One of the design principles of \scip is that all major
components of the solver are organized as plug-ins.
This allows users to easily extend \scip by tailored techniques for
specific applications, e.g., cutting planes or heuristics.
The main plug-in type is a so-called constraint handler.
A constraint handler provides an abstract notion of a class of constraints
(e.g., general linear constraints, knapsack constraints, SOS1 constraints,
or general nonlinear constraints), and defines general rules to enforce
that a solution adheres to the corresponding constraints.
That is, if \scip solves an instance of~\eqref{eq:MINLP} and finds some
intermediate solution~$x$, every constraint~$g_k$, $k \in [m]$, will ask
its corresponding constraint handler for rules to check whether the
solution satisfies~$g_k(x) \leq 0$ and, if not, how this can be resolved.

With the release of \scip~5.0, a symmetry detection mechanism had been
implemented.
This mechanism, however, was only able to detect symmetries if all
constraints of a problem have a constraint handler being part of the \scip
release.
One of our motivations to introduce the abstract notion of \SDGs
was to overcome this limitation and to allow symmetry detection also in the
presence of custom constraints.
We realized symmetry detection via \SDGs in \scip by introducing a new
optional callback for constraint handlers.
If a constraint handler implements this callback, the callback needs to create
an \SDG from the constraint's data adhering to
Proposition~\ref{prop:basicSDG}.
The \SDGs for individual constraints are then combined to a global \SDG for
the entire problem via Theorem~\ref{thm:buildSDG}.
Symmetry detection in \scip will then check whether the constraint handlers
of all constraints present in the problem implement the new callback.
If this is the case, symmetries are computed based on the global \SDG;
otherwise, symmetry computation is disabled.
In the remainder of this section, we provide details of how \SDGs can be
encoded using our implementation, and how variable color and constraint
compatibility can be ensured.

\paragraph{Nodes and Edges of \SDGs.}
In the construction of \SDGs for nonlinear constraints represented by
expression trees, we used four different types of nodes: nodes
representing numerical values, mathematical operators, variables, and an
anchor.
Since we believe that the representation via expression trees is rather
generic, our implementation also makes use of four different types of
nodes that can hold different information:
\begin{description}
\item[\emph{value nodes}] store a floating-point number;
\item[\emph{operator nodes}] store an integer value serving as an
  identifier of an operator;
\item[\emph{variable nodes}] store the index of the corresponding
  (reflected) variable;
\item[\emph{constraint nodes}] store a pointer to a constraint as well as
  two floating-point numbers (referred to as left-hand side and right-hand
  side).
\end{description}
Due to the definition of \SDGs, our implementation automatically adds
variable nodes for all (reflected) variables to an \SDG.
The remaining types of nodes can be added to an \SDG by dedicated
functions, see Appendix~\ref{app:functions}.
Each node of an \SDG is identified by an integer index.

Edges~$\{u,v\}$ of \SDGs are identified by the indices~$u$ and~$v$,
and can optionally hold a floating-point value, cf.\ the assignment of
numerical values to edges in Section~\ref{sec:basicdetection}.
By calling a function, edges can be added to an \SDG.
Our implementation also makes sure that every \SDG contains the edges
connecting the distinguished nodes for a variable~$x_i$ and its
reflection~$x_{-i}$.

Note that neither for nodes nor for edges we allow to specify a color.
The colors will be determined separately.

\paragraph{Ensuring Compatibility.}
To ensure constructing correct \SDGs for optimization problems,
all \SDGs for individual constraints need to be anchored, and the
collection of \SDGs has to be variable color and constraint
compatible.
We therefore devised the following principles for constructing an \SDG that
should be used by symmetry detection callbacks of constraint
handlers.
\begin{enumerate}[left=0.1cm,label={(P\arabic*)},ref={P\arabic*}]
\item\label{P1} The callback must not add edges connecting two variable nodes.

\item\label{P2} Every \SDG for an individual constraint should have exactly one
  constraint node that serves as an anchor.

\item\label{P3} The \SDGs constructed by a constraint handler for two
  constraints~$c,c'$ are only isomorphic if~$c \equiv c'$, cf.\
  Section~\ref{sec:detection}.
\end{enumerate}
Regarding~\eqref{P1}, the only edges between variable nodes that
are required in   Section~\ref{sec:detection} are the edges connecting
variable nodes for pairs of reflected variables, which are present in an
\SDG by default in our implementation.
Regarding~\eqref{P2}, although operator and value nodes could serve as
anchors, too, constraint nodes easily allow to ensure constraint
compatibility, see below.

A key component of ensuring compatibility is that only equivalent types
of nodes and edges receive identical colors.
We therefore did not allow to specify the color of a node or edge during
creation, but instead assign properties to nodes and edges.
Once all \SDGs have been created, \scip computes pairwise disjoint sets of
colors for the edges and the four different types of nodes.
While the colors of edges as well as operator and value nodes can be easily
derived from the unique value assigned to the corresponding object,
deriving the colors of variable and constraint nodes is not immediate.
For constraint nodes, we derive a color based on the corresponding
left-hand side and right-hand side values as well as the constraint handler
of the associated constraint.
Variable nodes are assigned a color based on their types as explained in
Section~\ref{sec:detection}.
The latter guarantees variable color compatibility.

It remains to discuss constraint compatibility.
Let~$G$ be the constructed \SDG for an entire optimization problem, and
let~$G'$ and~$G''$ be the \SDGs for two constraints of the problem.
In the proof of Theorem~\ref{thm:buildSDG}, we noted that anchors
guarantee that, if an automorphism of~$G$ maps a node of~$G'$ onto a
node of~$G''$, then every node of~$G'$ needs to be mapped onto a node
of~$G''$.
Since the color of an anchor node depends on the corresponding constraint
handler by~\eqref{P2}, only \SDGs derived from the same constraint handler
can be isomorphic.
Principle~\eqref{P3} then guarantees constraint compatibility.
\begin{remark}
  Although~\eqref{P3} seems difficult to achieve at first glance, it is
  usually easy to implement in practice.
  The reason is that constraint handlers need to implement abstract rules
  for deriving an \SDG from a constraint.
  One such mechanism has been described for \SDGs derived from expression
  trees in Section~\ref{sec:basicdetection}, and
  Section~\ref{sec:illustrateFramework} will discuss another
  approach for particular constraints without an immediate expression tree
  representation.
\end{remark}

\subsection{Callbacks and Their Usage}
\label{sec:illustrateFramework}

In this section, we provide a more detailed description of our callbacks
and we illustrate how to use them to detect symmetries.

\paragraph{Callbacks}
In \scip~9.0, one can compute either
reflection symmetries or classical permutation symmetries.
To allow constraints to inform \scip about how to detect (reflection)
symmetries, their constraint handlers must implement the
\code{SCIP\_DECL\_CONSGETSIGNEDPERMSYMGRAPH} callback for reflection
symmetries or the \code{SCIP\_DECL\_CONSGETPERMSYMGRAPH} callback for
permutation symmetries.
Since the functionality of both callbacks is essentially the same, we refer
to both of them just as ``the callback'' in the following.
The only differences between the \SDGs for permutation symmetries and
reflection symmetries is that an \SDG for permutation symmetries does not
contain nodes for reflected variables and that the variable type is defined
according to the original variable bounds (i.e., not shifted).

The input of either callback are five pointers providing the following information:
\begin{description}
\item[\code{scip}] data structure with information about
  the problem and solving process;
\item[\code{conshdlr}] constraint handler for which the callback
  has been implemented;
\item[\code{cons}] constraint for which an \SDG shall be
  created;
\item[\code{graph}] \SDG to which the information of \code{cons} shall be added;
\item[\code{success}] Boolean to store whether the \SDG could be created successfully.
\end{description}
If the callback cannot create the \SDG, it should set \code{success} to
\code{FALSE} to inform \scip that not all constraints could provide
symmetry information.
Symmetry detection gets then disabled.

Constraints in \scip usually provide further information that fully
characterizes the constraint, so-called consdata.
Linear constraints, for example, store the variables and coefficients as
well as the left-hand side and right-hand side of the constraint in the
consdata.
The consdata can be accessed from \code{cons} and usually is the only
information that should be used to create the \SDG.

The \SDG to which nodes and edges shall be added is given by \code{graph}.
Note that we decided to not create a separate \SDG for every constraint.
Instead, \code{graph} corresponds to the \SDG for the entire problem.
This does not cause any conflicts as long as the callback only adds edges
between nodes that have been created by the callback for the same
constraint.
Since no new variable nodes can be created, there cannot be
multiple nodes modeling the same variable.

\paragraph{Using the Callback}

We illustrate how the callbacks can be used for the stable set problem.
Let~$H = (V,E)$ be an undirected graph with node weights~$w_v \in
\R$, $v \in V$.
A \emph{stable set} in~$H$ is a set~$S \subseteq V$ if~$\{u,v\} \notin E$
for all~$u,v \in S$.
The task is to find a stable set~$S$ with maximum total node weight.
A classical integer programming formulation is
\[
  \max\left\{ \sum_{v \in V} w_vx_v : x_u + x_v \leq 1 \text{ for all }
    \{u,v\} \in E \text{ and } x \in \B{V}\right\}.
\]
It is well-known that the linear programming (LP) relaxation of this
formulation is rather weak and that it can be enhanced by adding so-called
clique inequalities~$\sum_{v \in C} x_v \leq 1$, where~$C$ is a clique
in~$H$.
Among others, the following two possibilities exist to make use of clique
inequalities in \scip.
On the one hand, one could enumerate all clique inequalities explicitly and
add them as linear inequalities to the problem formulation.
In this case, \scip can automatically detect symmetries.
But since there might be exponentially many cliques, the LP relaxation
could become much harder to solve.
On the other hand, one could implement an abstract constraint handler that
gets~$H$ as input and decides on the fly whether a solution satisfies all
clique inequalities, and adds an inequality in case it is violated by the
solution.
Without our callback, however, \scip cannot know how the presence of the
stable set constraint handler impacts the symmetries of the problem.

To implement our callback, we observe that every automorphism of~$H$ that
exchanges nodes of the same weight is also a symmetry of the stable set
problem.
The \SDG for the stable set constraint handler thus essentially needs to
add a copy of~$H$ to the global \SDG.
We illustrate this in Figure~\ref{fig:illustrateCallback} and provide a
description next.

\begin{figure}[t!]
  \centering
  \includegraphics[scale=0.8]{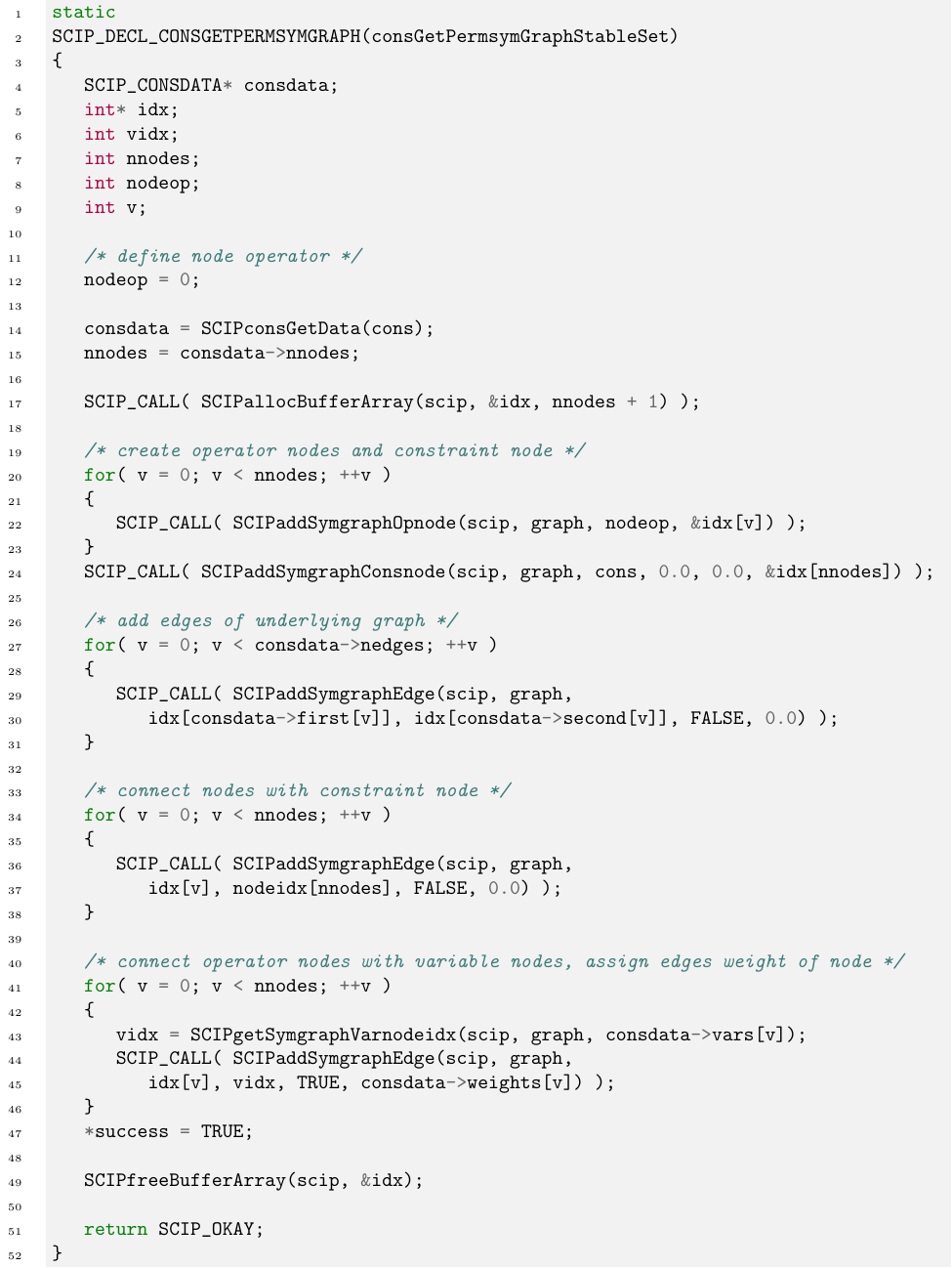}  
  \caption{Illustration of usage of symmetry detection callback.}
  \label{fig:illustrateCallback}
\end{figure}

We assume that the consdata of the stable set constraint contains the
following information:
\code{nnodes} provides the number of nodes in~$H$ and the nodes are
labeled~$0,\dots,\code{nnodes}-1$; \code{nedges} provides the number of
edges in~$H$ and edges are encoded via two arrays \code{first} and
\code{second} that contain the first and second nodes of all edges,
respectively;
\code{weights} is an array that assigns each node its weight.

First, we encode an operator type in Line~12 of
Figure~\ref{fig:illustrateCallback}, which represent nodes of the
graph~$H$.
Note that there might exist other constraint handlers that also define an
operator with the same index.
This is not an issue though as~\eqref{P2} will make sure that we cannot mix
constraints of the stable set constraint handler with constraints from
other constraint handlers.

Second, we extract some information about~$H$ and create an array
\code{idx} to store the indices of the newly created nodes of the \SDG
(Line~17).
Afterwards, we create for each node of~$H$ a copy in the \SDG by creating
a corresponding operator node (Line~22).
We also create an anchor of the \SDG (Line~24) by introducing a constraint node.
Since the abstract stable set constraint has neither a left-hand side nor a
right-hand side, we store the dummy values~0 and only assign the constraint
pointer \code{cons} to the anchor node.

Third, the edges of~$H$ are copied to the \SDG in Line~29.
Since edges of~$H$ are unweighted, also the corresponding edges of the \SDG
are unweighted (indicated by \code{FALSE}).
Fourth, we connect the anchor with the remaining nodes of the \SDG.
To preserve symmetries of~$H$, the anchor is connected with all copied nodes
of~$H$ (Line~36).
Finally, we add edges between the copies of nodes of~$H$ and the
corresponding variable nodes (Line~44).
These edges receive the weights of the corresponding nodes in~$H$, and we
set the \code{success} pointer to \code{TRUE} since the graph could
be created.

\subsection{Technical Details}
\label{sec:technicalDetails}

In this section, we provide further technical details of our
implementation.
First, we briefly discuss our data structures for \SDGs.
Second, we mention further details of the implementation to realize
the ideas of Section~\ref{sec:detection} in \scip.
Finally, we explain how automorphism of \SDGs can be detected.

\paragraph{Encoding Symmetry Detection Graphs}
We experimented with different data structures for encoding \SDGs.
Our first attempt was to use an object-oriented implementation containing
separate objects (or structs in \code{C}) for graphs, nodes, and edges.
But it turned out that, even for linear problems, the creation of \SDGs
was much slower than the previous implementation in \scip that was tailored
towards constraints known by \scip.
The main reason for the slower performance was an overhead in memory
allocation for the separate objects.
For the same reason, we discarded the idea of creating separate \SDGs for
individual constraints.
Instead, we only maintain a single \SDG that is extended by the callbacks,
which turned out to be more efficient.
The final data structure for \SDGs is a \code{C} struct using basic
\code{C} data types such as \code{int}, \code{double}, or pointers to
encode information about nodes and edges.

In the following, assume an \SDG has~$n$ nodes and~$m$ edges.
Nodes of \SDGs are identified by an index in~$N = \{0,\dots,n-1\}$ and several
arrays store information about the nodes.
Recall that we distinguish four different types of nodes (operator,
numerical value, variable, and constraint).
These types are encoded by an \code{enum} and several arrays hold the
information corresponding to the nodes of different types.
For example, array \code{vals} holds the values assigned to value nodes, and
array \code{lhs} stores the left-hand side values assigned to the
constraint nodes.
To access the information associated with node~$i \in N$, we use two
arrays.
Array \code{nodetypes} stores the type of the different nodes and array
\code{nodeinfopos} stores the index of each node in its corresponding
group.
That is, if~$j$ is the value stored at \code{nodeinfopos[i]}, then node~$i
\in N$ is the~$j$-th node within the group \code{nodetypes[i]}.
If node~$i$ was a constraint node, we thus could access its associated
left-hand side at \code{lhs[j]}.
Several functions are available to access node information or to create
nodes, see Appendix~\ref{app:functions}.
When creating a new node, we check whether the node still fits into the
allocated memory for the aforementioned arrays.
If not, we enlarge the arrays by memory reallocation; the new size of the
arrays is determined by \scip based on a growth factor to avoid frequent
memory reallocation.

Edges are identified by an index in~$\{0,\dots,m-1\}$.
Two arrays \code{edgefirst} and \code{edgesecond} store the indices of the
first and second nodes in the edges, respectively; array \code{edgevals}
stores the values associated with an edge.
If an edge has no assigned value, we use the placeholder value infinity in
\code{edgevals}.

Moreover, an \SDG struct contains \code{int} arrays for the colors
associated with nodes and edges.
Initially, these arrays are empty and only created if computing colors is
triggered by calling a function, see the next section for details.
In this case, an \SDG will lock itself, which means that is is no longer
possible to add nodes and edges.
This is a safety mechanism to ensure that the stored colors always
correspond to the \SDG.

\paragraph{Implementation Details}
We discuss two implementation details concerning numerical
inaccuracies and aggregation of variables.
The former is relevant for computing colors, whereas the second has
consequences for creating \SDGs.

To derive colors for nodes, we sort nodes first by their type (operator,
constraint etc.) and then based on the information associated with them.
For instance, numerical value nodes are sorted based on their associated
value, and variable nodes are sorted based on their variable's type.
Since \scip uses floating-point arithmetic when solving an optimization
problem, it is reasonable to also consider two nodes as identical if their
associated information does not deviate more than a small
quantity~$\varepsilon$ (by default~$10^{-9}$ in \scip).
We then partition the sorted list of nodes into blocks of consecutive
elements such that the first and last node per block do not deviate more
than~$\varepsilon$.
Every node within the same block is then assigned the same color, and nodes
from different blocks receive different colors.
Colors for edges are computed analogously.

In Definition~\ref{def:symdetectgraph}, we required \SDGs to contain a
distinguished node for every (reflected) variable of~\eqref{eq:MINLP}.
In practice, however, \scip removes variables from a problem, e.g., when
they get fixed or can be represented as a weighted sum of other
variables (variable aggregation).
To reduce the number of variable nodes in \SDGs, it thus makes sense to
only represent variables in an \SDG that are neither fixed nor aggregated.
Our implementation of \SDGs therefore also only supports unfixed and
non-aggregated variables.
When using the symmetry detection callbacks, every occurrence of a
fixed or aggregated variable needs to be replaced by the corresponding
constant or sum of variables, respectively.

\paragraph{Detecting Automorphisms}
To detect symmetries of \SDGs, we use external software packages for
computing graph automorphisms.
Our current implementation supports the packages
\solver{bliss}~\cite{bliss} and \solver{nauty}~\cite{nauty}.
Since \solver{bliss} and \solver{nauty} only support graphs with uncolored
edges, we post-process \SDGs:
every edge~$\{u,v\}$ of color~$c$ is replaced by an auxiliary node~$w$
of color~$c$ as well as the uncolored edges~$\{u,w\}$ and~$\{v,w\}$.
As noted in Section~\ref{sec:existingdetection}, the number of auxiliary
nodes can be reduced by identifying some of them with each other.
For the \SDGs~$G$ for permutation symmetries of MILP, see
Section~\ref{sec:existingdetection}, this can be achieved as
follows, cf.~\cite{Puget2005}.

Let~$v$ be a variable node of~$G$ and let~$E^v_c$ be the set of all edges of
color~$c$ that are incident with~$v$.
Instead of introducing an auxiliary node for all edges in~$E^v_c$, it is
sufficient to introduce one auxiliary node~$w$ as well as the
edges~$\{v,w\}$ and~$\{u,w\}$ for all~$\{u,v\} \in E^v_c$.
This way, the number of auxiliary nodes changes from~$\abs{E^v_c}$
to~1 and the number of edges changes from~$2 \cdot \abs{E^v_c}$ to~$1 +
\abs{E^v_c}$.
This mechanism of reducing the number of auxiliary nodes is called
\emph{grouping by constraints}~\cite{PfetschRehn2019}; if the mechanism is
applied to constraint nodes instead of variable nodes, it is called
\emph{grouping by variables}.

We have implemented an analogous grouping mechanism for our abstract \SDGs,
where we group edges being incident to either the same constraint or
variable node.
The former is used if there are less constraint nodes than variable nodes.

\section{Handling Reflection Symmetries}
\label{sec:handling}

In this section, we describe methods for handling (reflection) symmetries
in MINLP, which we illustrate using the disk packing problem of
Example~\ref{ex:geompack}.
In the disk packing problem, we distinguish three types of symmetries:
\begin{enumerate*}[label=(\roman*)]
\item permutation symmetries that exchange indices of disks;
\item reflection symmetries that reflect the~$x$- or~$y$-coordinates of all
  disks;
\item and, if the width and height of the box are identical, permutation
  symmetries that exchange both coordinates.
\end{enumerate*}
To visualize these symmetries, it is convenient to encode a solution of the
disk packing problem by a matrix~$M \in \R^{D \times 2}$, where the~$i$-th
row~$(M_{i1}, M_{i2})$ corresponds to the center point~$(x_i, y_i)$ of
the~$i$-th disk.
The first class of symmetries then corresponds to reordering the rows
of~$M$, the second class to changing the signs of all entries of a column
of~$M$, and the last class to an exchange of the two columns of~$M$, see
Figure~\ref{fig:matrixsym}.

\begin{figure}[t]
  \centering
  \subfigure[Original matrix.]
  {
    \begin{tikzpicture}[scale=0.5]
      \draw (0,0) grid (3,4);

      \node(a) at (-1,0) {};
      \node (1)  at (0.5,0.5) {1};
      \node (2)  at (1.5,0.5) {2};
      \node (3)  at (2.5,0.5) {3};
      \node (4)  at (0.5,1.5) {4};
      \node (5)  at (1.5,1.5) {5};
      \node (6)  at (2.5,1.5) {6};
      \node (7)  at (0.5,2.5) {7};
      \node (8)  at (1.5,2.5) {8};
      \node (9)  at (2.5,2.5) {9};
      \node (10) at (0.5,3.5) {10};
      \node (11) at (1.5,3.5) {11};
      \node (12) at (2.5,3.5) {12};
      \node(z) at (4,0) {};
    \end{tikzpicture}
  }
  \subfigure[Disk permutation.]
  {
    \begin{tikzpicture}[scale=0.5]
      \draw (0,0) grid (3,4);

      \node(a) at (-1,0) {};
      \node (1)  at (0.5,0.5) {4};
      \node (2)  at (1.5,0.5) {5};
      \node (3)  at (2.5,0.5) {6};
      \node (4)  at (0.5,1.5) {10};
      \node (5)  at (1.5,1.5) {11};
      \node (6)  at (2.5,1.5) {12};
      \node (7)  at (0.5,2.5) {1};
      \node (8)  at (1.5,2.5) {2};
      \node (9)  at (2.5,2.5) {3};
      \node (10) at (0.5,3.5) {7};
      \node (11) at (1.5,3.5) {8};
      \node (12) at (2.5,3.5) {9};
      \node(z) at (4,0) {};

      \draw[->] (0,3.25) to [bend right=90] (0,1.75);
      \draw[->] (0,1.25) to [bend right=60] (0,0.75);
      \draw[->] (0,0.25) to [bend left=90] (0,2.75);
      \draw[->] (0,2.25) to [bend left=90] (0,3.75);
    \end{tikzpicture}
  }
  \subfigure[Reflection.]
  {
    \begin{tikzpicture}[scale=0.5]
      \draw (0,0) grid (3,4);

      \node(a) at (-0.5,0) {};
      \node (1)  at (0.5,0.5) {1};
      \node (2)  at (1.5,0.5) {-2};
      \node (3)  at (2.5,0.5) {3};
      \node (4)  at (0.5,1.5) {4};
      \node (5)  at (1.5,1.5) {-5};
      \node (6)  at (2.5,1.5) {6};
      \node (7)  at (0.5,2.5) {7};
      \node (8)  at (1.5,2.5) {-8};
      \node (9)  at (2.5,2.5) {9};
      \node (10) at (0.5,3.5) {10};
      \node (11) at (1.5,3.5) {-11};
      \node (12) at (2.5,3.5) {12};
      \node(z) at (3.5,0) {};

      \draw[red,ultra thick] (1,0) -- (2,0) -- (2,4) -- (1,4) -- (1,0);
    \end{tikzpicture}
  }
  \subfigure[Coordinate symmetry.]
  {
    \begin{tikzpicture}[scale=0.5]
      \draw (0,0) grid (3,4);

      \node(a) at (-1.5,0) {};
      \node (1)  at (0.5,0.5) {1};
      \node (2)  at (1.5,0.5) {3};
      \node (3)  at (2.5,0.5) {2};
      \node (4)  at (0.5,1.5) {4};
      \node (5)  at (1.5,1.5) {6};
      \node (6)  at (2.5,1.5) {5};
      \node (7)  at (0.5,2.5) {7};
      \node (8)  at (1.5,2.5) {9};
      \node (9)  at (2.5,2.5) {8};
      \node (10) at (0.5,3.5) {10};
      \node (11) at (1.5,3.5) {12};
      \node (12) at (2.5,3.5) {11};
      \node(z) at (4.5,0) {};

      \draw[<->] (1.5,4) to [bend left=60] (2.5,4); 
    \end{tikzpicture}
  }
  \caption{Illustration of matrix symmetries arising in
    Example~\ref{ex:geompack}.}
  \label{fig:matrixsym}
\end{figure}
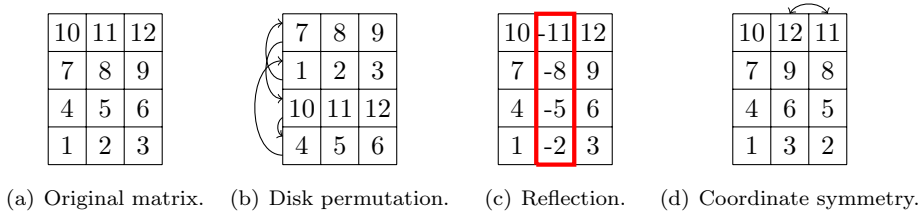

The described actions of the symmetry group of an MINLP on a matrix of
variables is rather generic and, as we will see in
Section~\ref{sec:numerics}, occurs in many applications.
We therefore discuss, after a short overview of symmetry handling
techniques for permutation symmetries, methods for row-exchanges,
column-reflections, and their combination with column-exchanges in more
detail.
We conclude by devising general techniques for handling reflection
symmetries.

\subsection{General Symmetries}

Consider an~\eqref{eq:MINLP} with variable vector~$x \in \R^n$
and reflection symmetry group~\mbox{$\sgroup \leq \ssym{n}$}.
A standard way for handling~$\sgroup$ is to define an
order of the variables, encoded by a permutation~$\perm \in \sym{n}$, and
to enforce that a solution~$x$ must be lexicographically maximal
in its~$\sgroup$-orbit.
Formally, one is looking for feasible solutions~$x$ of~\eqref{eq:MINLP}
satisfying~$\perm(x) \lexgeq
\perm(\sperm(x))$ for all~$\sperm \in \sgroup$, where~$\lexgeq$ denotes the
lexicographic comparison.
In general, enforcing the lexicographic order constraints in coNP-hard,
cf.~\cite{BabaiLuks1983}.
One therefore typically uses one of the following three alternatives.

First, one can handle the lexicographic order constraints for a
small subset of symmetries.
For binary variables, a constraint~$\perm(x) \lexgeq \perm(\sperm(x))$ can
be handled by adding linear inequalities to~\eqref{eq:MINLP}.
To fully encode the lexicographic comparison by linear inequalities,
however, one needs exponentially many inequalities or inequalities with
exponentially large coefficients~\cite{Hojny2020a}.
An exponentially large class with coefficients in~$\{0, \pm 1\}$ is
presented in~\cite{HojnyPfetsch2019}, whereas~\cite{Friedman2007} uses a
single inequality with exponentially large coefficients.
To overcome these drawbacks, an alternative is to propagate lexicographic
constraints during branch-and-bound.
That is, given the bounds on variables of a subproblem of branch-and-bound,
one derives further reductions of variable bounds that are satisfied by any
lexicographic maximal solution.
Such a propagation algorithm, \emph{lexicographic reduction}, is described
in~\cite{DoornmalenHojny2023} and runs in linear time.
In particular, the algorithm also works for non-binary variables.

Second, one can derive strong symmetry handling methods for
particular classes of symmetry groups.
Besides techniques for the row symmetries of the disk packing problem,
which we discuss in the next section, also efficient propagation algorithms
for particular cyclic groups exist~\cite{DoornmalenHojny2024}.

Third, there exist intermediate approaches that handle some but not
necessarily all group structure.
For example, \emph{orbital fixing}~\cite{OstrowskiEtAl2011} fixes
binary variables based on the branching decisions and permutation
symmetries of~\eqref{eq:MINLP}.
Recently, this method has been generalized to arbitrary variable
types~\cite{DoornmalenHojny2023}, so-called \emph{orbital reduction}.
Moreover, \cite{LibertiOstrowski2014,Salvagnin2018} discuss sparse linear
inequalities that are derived from
the Schreier-Sims table of a permutation group and that handle symmetries
based on variable orbits.

Recall that, by Definition~\ref{def:admissable}, symmetric variables can
have different variable domains due to translations.
For handling row and column symmetries, we assume in the following
that all symmetric variables have the same domain.

\subsection{Row Symmetries}
\label{sec:rowsymmetries}

A popular approach for handling row symmetries of a matrix~$M \in \R^{p
  \times q}$ is to lexicographically sort its rows,
i.e., if~$M_{i \cdot}$, $i \in [p]$, denotes the~$i$-th row of~$M$, one
enforces~$M_{1\cdot} \lexgeq M_{2\cdot} \lexgeq \dots \lexgeq
M_{p\cdot}$.
By interpreting~$M$ as a vector~$x \in \R^{pq}$ such that entry~$(i,j)$
of~$M$ is entry~$(i-1)q + j$ of~$x$, this can be achieved via the
lexicographic ordering constraints of the previous section using~$\perm =
\id$.
But since the group of all row permutations has size~$p!$,
it is impractical to add all ordering constraints.
Instead, it is folklore that sorting the rows can already be achieved by
the ordering constraints for the~$p-1$ permutations that exchange
consecutive rows, cf.~\cite{HojnyPfetsch2019}.
Methods such as lexicographic reduction can then be used to enforce the
ordering constraints.

Since these methods ignore the interplay of different row permutations,
they might not detect all reductions that can be derived from a matrix
with sorted rows.
An alternative is \emph{orbitopal reduction}~\cite{DoornmalenHojny2023}.
This propagation algorithm is called within a branch-and-bound algorithm
and receives bounds on all variables in~$M$ w.r.t.\ the current
subproblem.
The algorithm then identifies the tightest bounds for all
variables in~$M$ to which every sorted feasible solution of the subproblem
adheres to.
Originally, this algorithm has been derived for matrices of binary
variables~\cite{BendottiEtAl2021}, but the variant
of~\cite{DoornmalenHojny2023} is applicable for general integer and
continuous variables, too.
Moreover, \cite{KaibelEtAl2011} describe a variant of this algorithm if
all variables in~$M$ are binary and at most one (resp.\ exactly one)
variable per column is allowed to attain value~1.
A facet description of the convex hull of all such binary matrices~$M$ is
also known~\cite{KaibelPfetsch2008}; the facet-defining inequalities can
then be added to~\eqref{eq:MINLP} to handle row symmetries.

Above, we discussed a fixed assignment of the entries of~$M \in \R^{p \times q}$
to a vector~$x \in \R^{pq}$.
This requirement can be relaxed such that different assignments can be used
at different nodes of the branch-and-bound tree.
For example, \cite{BendottiEtAl2021} discusses a mechanism to change the
order of the columns of~$M$ before translating~$M$ into a vector~$x$.
A variant in which also the order of the rows can be exchanged is discussed
in~\cite{DoornmalenHojny2023}.
We refer to both variants as \emph{column-dynamic} and \emph{column-row-dynamic}
orbitopal fixing, respectively.
The variant without any reordering is referred to as \emph{static}
orbitopal fixing.

\subsection{Column Reflections}
\label{sec:colrefl}

Let~$M \in \R^{D \times 2}$ be the variable matrix for the disk packing
problem for~$D$ disks.
Then, $M$ admits row symmetries and reflection symmetries of
individual columns.
Let~$n_1 = \lceil \frac{D}{2} \rceil$ and~$n_2 = \lceil \frac{n_1}{2}
\rceil$.
To handle reflection symmetries in the regime of disk packing,
\cite{Khajavirad2017} enforces~$M_{i,1} \geq 0$ for all~$i \in
[n_1]$ and~$M_{i,2} \geq 0$ for all~$i \in [n_2]$.
Geometrically, this means that there are at least as many disks in the
right half of the box than in the left half; moreover, within the right
half, there are not less disks in the upper half than in the lower half.

Formally, we can derive these reductions as follows.
After possibly applying a reflection of the first column, one can guarantee
that the first column has at least as many non-negative entries as
non-positive entries.
Due to row symmetries, the first~$n_1$ entries can thus be enforced to be non-negative.
This argument can be repeated for the first~$n_1$ rows of the second
column, because enforcing the non-negativity structure on the first column does not allow
to exchange one of the first~$n_1$ rows of~$M$ with one of the last~$D -
n_1$ rows.

The latter argument also shows that one can sort the first~$n_1$ rows
of~$M$ and the last~$D - n_1$ rows lexicographically.
To partially enforce this, \cite{Khajavirad2017} suggests to add the
inequalities~$M_{i,1} \geq M_{i+1,1}$ for~$i \in [D - 1] \setminus
\{n_1\}$.
A similar sorting idea has been applied for the kissing
number problem in~\cite{Liberti2012}.

The idea of~\cite{Khajavirad2017} can be generalized to
matrices with more than two columns.
We provide the following result without a proof, see
Figure~\ref{fig:rowsymrefl} for an illustration.
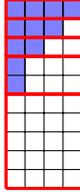
\begin{figure}[t]
  \centering
  \begin{tikzpicture}[scale=0.25]
    \draw[fill=blue!50] (0,5) -- (1,5) -- (1,10) -- (0,10) -- (0,5);
    \draw[fill=blue!50] (1,7) -- (2,7) -- (2,10) -- (1,10) -- (1,7);
    \draw[fill=blue!50] (2,8) -- (3,8) -- (3,10) -- (2,10) -- (2,8);
    \draw[fill=blue!50] (3,9) -- (4,9) -- (4,10) -- (3,10) -- (3,9);
    \draw (0,0) grid (4,10);
    \draw[-,red,very thick] (0,0) -- (4,0) -- (4,5) -- (0,5) -- (0,0);
    \draw[-,red,very thick] (0,5) -- (4,5) -- (4,7) -- (0,7) -- (0,5);
    \draw[-,red,very thick] (0,7) -- (4,7) -- (4,8) -- (0,8) -- (0,7);
    \draw[-,red,very thick] (0,8) -- (4,8) -- (4,9) -- (0,9) -- (0,8);
    \draw[-,red,very thick] (0,9) -- (4,9) -- (4,10) -- (0,10) -- (0,9);
  \end{tikzpicture}
  \caption{Row symmetries combined with reflection symmetries of columns.
    Rows within the red blocks can be exchanged arbitrarily, whereas blue
    cells can be restricted to the upper half of the domain.
  }
  \label{fig:rowsymrefl}
\end{figure}
\begin{proposition}
  \label{prop:handlerefl}
  Let~$M \in \R^{p \times q}$ be a variable matrix admitting row symmetries
  and reflection symmetries for individual columns.
  Suppose that, for all~$j \in [q]$, all variables in column~$j$
  of~$M$ have the same domain center~$\dcenter_j \in \R$.
  Let~$n_0 = p$ and, for~$j \in [q]$, let~$n_j = \lceil \frac{n_{j-1}}{2}
  \rceil$.
  Then, a valid symmetry handling approach is given by
  \begin{itemize}
  \item adding, for each~$j \in [q]$ and~$i \in [n_j]$, the
    inequality~$M_{i,j} \geq \dcenter_j$, and
  \item enforcing, for each~$j \in [q] \cup \{0\}$, that~$M_{n_j + 1,
      \cdot} \lexgeq \dots \lexgeq M_{n_{j-1}, \cdot}$.
  \end{itemize}
\end{proposition}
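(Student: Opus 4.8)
The plan is to establish validity in the standard sense: for every feasible matrix~$M$, I would produce an element of the symmetry group~$\sgroup$ (a combination of row permutations and column reflections) mapping~$M$ to a matrix that satisfies \emph{both} families of constraints simultaneously. Since such a representative remains feasible with the same objective value, the reductions cut off no orbit completely and are therefore valid. The representative will be built greedily, one column at a time, and the real content is showing that these column-wise reductions are mutually compatible. Throughout I use that, by assumption, all variables in column~$j$ share the center~$\dcenter_j$, so reflecting column~$j$ via~$M_{\cdot,j} \mapsto 2\dcenter_j - M_{\cdot,j}$ is a genuine symmetry, and that symmetric variables share a common domain (so lexicographic sorting of rows is a legitimate way to handle row symmetry, as in Section~\ref{sec:rowsymmetries}).

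First I would fix the sign inequalities inductively over~$j = 1,\dots,q$. Suppose columns~$1,\dots,j-1$ have been treated and that rows~$1,\dots,n_{j-1}$ are still freely permutable among themselves. Among these~$n_{j-1}$ rows, after possibly reflecting column~$j$ at least~$\lceil n_{j-1}/2\rceil = n_j$ of the entries in column~$j$ satisfy~$M_{i,j} \geq \dcenter_j$ (reflection swaps the counts of entries strictly above and strictly below~$\dcenter_j$, so one of the two choices makes the non-below count at least half). A row permutation inside~$\{1,\dots,n_{j-1}\}$ then moves~$n_j$ such rows into positions~$1,\dots,n_j$, yielding~$M_{i,j}\geq\dcenter_j$ for~$i\in[n_j]$ and leaving rows~$1,\dots,n_j$ freely permutable for the next step. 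Iterating over all columns produces every sign inequality of the first bullet.

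The hard part, which I expect to be the main obstacle, is verifying that step~$j$ does not destroy the sign structure already imposed on columns~$1,\dots,j-1$. This rests on two observations: reflecting column~$j$ alters only column~$j$, and every row permutation used at step~$j$ lies in~$\{1,\dots,n_{j-1}\}$, which by monotonicity of the~$n_j$ (so~$n_{j-1}\leq n_{j'}$ for~$j'<j$) is contained in each earlier top region~$\{1,\dots,n_{j'}\}$. Hence such a permutation only reorders rows that already satisfy~$M_{i,j'}\geq\dcenter_{j'}$, so the earlier constraints persist. The same argument pins down the residual symmetry: a row permutation keeps all imposed sign inequalities intact if and only if it stabilizes each nested set~$\{1,\dots,n_q\}\subseteq\dots\subseteq\{1,\dots,n_0\}$, so the residual row-symmetry group is exactly the direct product of the symmetric groups acting on the blocks~$[1,n_q]$ and~$[n_j+1,n_{j-1}]$, $j\in[q]$.

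It then remains to handle this residual group, which is where the second bullet enters. Within any one of these blocks every row carries the same set of active sign constraints (columns~$1,\dots,t$ are constrained on block~$[n_{t+1}+1,n_t]$ and the remaining columns are unconstrained there), so any permutation internal to a block preserves all imposed inequalities. I would therefore treat each block independently exactly as in Section~\ref{sec:rowsymmetries}, enforcing~$M_{n_j+1,\cdot}\lexgeq\dots\lexgeq M_{n_{j-1},\cdot}$ (and~$M_{1,\cdot}\lexgeq\dots\lexgeq M_{n_q,\cdot}$ on the top block). Because sorting only permutes rows within a block, it is compatible with the sign inequalities, so applying the greedy sign-fixing and then the within-block sort to any feasible~$M$ yields a single group-equivalent representative meeting every constraint at once, which completes the validity argument.
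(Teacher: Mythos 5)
Your proof is correct and takes essentially the same route the paper sketches (the paper states this proposition explicitly without a formal proof, giving only the informal two-column argument for the disk-packing case): greedily reflect and re-sort column by column, then sort rows within the residual blocks. You supply exactly the missing ingredients --- the counting argument that one of the two reflections leaves at least $\lceil n_{j-1}/2\rceil$ entries at or above the center, and the monotonicity $n_0 \geq n_1 \geq \dots \geq n_q$ ensuring that the permutations used at step~$j$ act inside every earlier top region and hence preserve the previously imposed inequalities --- so nothing further is needed.
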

The lexicographic sorting can be enforced, e.g., via static orbitopal
reduction.
A dynamic variant of orbitopal reduction is not immediately applicable due
to the additional lower bound constraints derived from the reflection
symmetries.

\subsection{Row and Column Symmetries}

In contrast to row symmetries only, it is coNP-complete to decide whether a
matrix~$M \in \{0,1\}^{p \times q}$ is lexicographically maximal w.r.t.\
row and column symmetries~\cite{BessiereEtAl2004}.
There is consequently no efficient mechanism to simultaneously handle row and
column symmetries unless~$\text{P}=\text{coNP}$.
A possible remedy to handle at least some symmetries is to enforce that both the
rows and columns of a solution matrix~$M \in \R^{p \times q}$
of~\eqref{eq:MINLP} are sorted lexicographically~\cite{FlenerEtAl2002}.

Sorting the rows can be achieved by static orbitopal reduction as
described before.
Moreover, sorting of the columns can be achieved by applying static
orbitopal reduction to the transposed variable matrix.
This is indeed compatible as one checks via the lexicographic ordering
constraints:
Recall that we associated with~$M \in \R^{p \times q}$ the vector~$x \in
\R^{pq}$ such that~$M_{i,j}$ corresponds to~$x_{(i-1)p + j}$.
As mentioned above, the rows of~$M$ are sorted
lexicographically if and only if~$x \lexgeq \sperm(x)$ for all row
permutations~$\sperm$.
However, one can show that also the vector~$x'$ that identifies~$M_{i,j}$
with~$x'_{(j-1)q + i}$ has the same properties.
That is, since~$x'$ corresponds to the first mechanism for the transpose
of~$M$, orbitopal reduction for both~$M$ and its transpose can be combined.

Due to the interaction of row and column symmetries, we stress that a
dynamic variant of orbitopal reduction is not immediately applicable.
Moreover, if also reflection symmetries act on individual columns, handling
row symmetries can be replaced by the mechanism in
Proposition~\ref{prop:handlerefl}.

\subsection{Further Techniques for Handling Reflection Symmetries}
\label{sec:furthertechniques}

We describe two further techniques for handling reflection symmetries that
also apply if no row or column symmetries are present in the problem.
The first one is a simple inequality that handles some, but by far
not all symmetries.
Consider an instance of~\eqref{eq:MINLP} that contains the signed
permutation~$\sperm^\star$ that reflects \emph{all} variables simultaneously,
i.e., $\sperm^\star(i) = -i$ for all~$i \in [n]$.
Instead of lexicographically comparing a solution~$x$ of~\eqref{eq:MINLP}
with~$\sperm^\star(x)$, one can enforce that the aggregated weight
of~$x$ is larger than the weight of~$\sperm^\star(x)$, i.e.,
\begin{equation}
  \label{eq:simpleRefl}
  \sum_{i = 1}^n x_i
  \geq
  \sum_{i = 1}^n \sperm^\star(x)_i
  =
  \sum_{i = 1}^n (2\dcenter_i - x_i)
  \qquad
  \Leftrightarrow
  \qquad
  \sum_{i = 1}^n x_i
  \geq
  \sum_{i = 1}^n \dcenter_i.
\end{equation}
This inequality can, e.g., be used for the max-cut problem~\cite{KorteVygen2018}, where
the aim is to partition the node set of an undirected graph~$G = (V,E)$
into two sets such that the weight of edges between both sets is
maximized.
If a binary variable~$x_v$, $v \in V$, indicates whether~$v$ is contained
in the first set ($x_v = 1$) or not ($x_v = 0$), all variables are
symmetric w.r.t.\ the described reflection symmetry.
Inequality~\eqref{eq:simpleRefl} then models that at least as many nodes
are in the first set of the partition than in the second set.

Note that~\eqref{eq:simpleRefl} cannot necessarily be combined
with lexicographic order based techniques.
For example, vector~$e^1 = (1,0,\dots,0)$
satisfies~$e^1 \lexgeq \sperm^\star(e^1)$, but
violates~\eqref{eq:simpleRefl}.
We therefore also discuss a second approach: a generalization of
lexicographic reduction to reflection symmetries.

The lexicographic reduction algorithm of~\cite{DoornmalenHojny2023}
receives a permutation symmetry~$\perm$ of~\eqref{eq:MINLP} as well as
upper and lower bounds for all variables as input.
The algorithm then finds, for each variable, the tightest lower and upper
bounds such that any solution~$x$ satisfying the initial bounds and~$x \lexgeq \perm(x)$
adheres to the new bounds.
Their algorithm can be immediately generalized to reflection
symmetries~$\sperm \in \ssym{n}$.
Since the arguments for correctness are the same as
in~\cite{DoornmalenHojny2023}, we only provide the high-level ideas but no
formal proof.

The core of lexicographic reduction is the following observation:
Suppose there is~$i \in [n]$ such that~$x_j = \sperm(x)_j$ for all~$j \in
[i-1]$.
Then, $x \lexgeq \sperm(x)$ can only hold if~$x_i \geq \sperm(x)_i$.
In this case, the lower bound of variable~$x_i$ can possibly be
strengthened to the lower bound of~$\sperm(x)_i$ and, vice versa, the upper
bound of~$\sperm(x)_i$ can possibly be strengthened to the upper bound
of~$x_i$.

To exploit this observation for~$\sperm \in \ssym{n}$, lexicographic
reduction iterates over the entries~$x_i$, $i \in [n]$, of a solution~$x$
and checks whether~$x_j = \sperm(x)_j$ holds for all~$j \in [i-1]$ by
comparing the upper and lower bounds of the variables.
If this is the case, three cases are distinguished.
First, the variable bounds imply~${x_i < \sperm(x)_i}$.
Then, the algorithm reports infeasibility because the variable bounds
imply~$x \nlexgeq \sperm(x)$.
Second, the bounds imply~$x_i > \sperm(x)_i$.
Then the algorithm stops since every solution adhering to the bounds
satisfies~${x \lexgt \sperm(x)}$.
Third, the upper and lower bounds on the variables can possibly be
improved due to the observation and the algorithm continues with the next
iteration and strengthened bounds.

If the algorithm terminates and did not report infeasibility, lexicographic
reduction has possibly improved some variable bounds.
As discussed in~\cite{DoornmalenHojny2023}, these bounds are as tight as
possible for all variables except for the lower bound on~$x_i$ and the
upper bound on~$\sperm(x)_i$ of the last iteration~$i$.
In a post-processing step, lexicographic reduction can determine whether
these bounds can be improved further, see~\cite{DoornmalenHojny2023} for
details.
The running time of lexicographic reduction is~$O(n)$.

\section{Numerical Experience}
\label{sec:numerics}

In this section, we evaluate our symmetry detection framework when solving
MILPs and MINLPs.
Specifically, we aim to answer the following questions:
\begin{enumerate}[label=(Q\arabic*), ref=(Q\arabic*),leftmargin={1cm}]
\item\label{Q1} Does the framework of Sections~\ref{sec:detection}
  and~\ref{sec:implementation} reliably detect reflection symmetries?

\item\label{Q2} What is the most effective way to enforce the lexicographic order
  constraints in Proposition~\ref{prop:handlerefl}?

\item\label{Q3} How frequently arise reflection symmetries in benchmarking instances?

\item\label{Q4} Is there a computational benefit for benchmarking instances when
  handling reflection symmetries instead of permutation symmetries?
\end{enumerate}
To answer~\ref{Q1}, we apply our framework to specific applications in
which we know that reflection symmetries arise and we evaluate whether
these symmetries can be detected (Section~\ref{sec:numericsApplications}).
Many of these instances contain row and column symmetries, which will allow
us to answer~\ref{Q2}.
In Section~\ref{sec:numericsBenchmark}, we turn the focus to instances
from three benchmarking test sets, which allows us to answer~\ref{Q3}
and~\ref{Q4}.
Before answering the questions, we provide an overview of how symmetries
are detected and handled in Section~\ref{sec:numericsSetup}.

\subsection{Computational Setup}
\label{sec:numericsSetup}

As mentioned in Section~\ref{sec:implementation}, we included a
\code{C}-implementation of our detection framework into the solver \scip.
Our implementation is publicly available in \scip; detailed
instructions on how to reproduce our results are available at~\cite{supplement}.
For most of \scip's default constraint handlers, we have implemented the
two callbacks for detecting permutation and reflection symmetries.
\scip allows to compute symmetries of the original problem or of the
problem after presolving.
In our experiments, we considered symmetries of the presolved problem,
which is also \scip's default setting.
\scip then builds the corresponding \SDG, computes its automorphism group,
and returns a set~$\sgroup'$ of signed permutations (so-called
\emph{generators}) that generates a group~$\sgroup$ of symmetries of the
MINLP.
That is, we do not have explicit access to all symmetries in~$\sgroup$, but
we can write every~$\sperm \in \sgroup$ as a finite composition of
elements from~$\sgroup'$.

\scip offers a variety of state-of-the-art methods for handling symmetries.
To decide which method is used, \scip analyzes the symmetry group~$\sgroup
\leq \ssym{n}$ and checks whether it is the direct product of smaller
groups, i.e., $\sgroup = \bigotimes_{s = 1}^t \sgroup_s$ for some signed
permutation groups~$\sgroup_s$, $s \in [t]$.
Each factor~$\sgroup_s$ can then be handled independently from the others,
and \scip calls different heuristics for each factor to decide how the
symmetries are handled.
The heuristics aim to detect structured symmetry groups in the following
order:
\begin{enumerate}
\item row and column symmetries;
\item row symmetries;
\item unclassified symmetry groups.
\end{enumerate}
We briefly describe how we detect and handle these symmetries.
For the ease of notation, we assume that~$\sgroup$ has a single factor.

\paragraph{Row and Column Symmetries.}
The group of row symmetries
is generated by permutations that exchange pairs of consecutive rows of
a matrix, cf.\ Section~\ref{sec:rowsymmetries}.
If~$x,y \in \R^q$ denote such rows, the corresponding row
permutation is~$(x_1,y_1)(x_2,y_2) \dots (x_q,y_q)$, i.e., it is a
composition of cycles of length~2.
Analogously, the group of column symmetries is generated by exchanges of
consecutive columns.
We refer to the number of 2-cycles as the composition's length.

When detecting row and column symmetries, however, the underlying matrix is unknown.
Our heuristic therefore collects all generators of~$\sgroup$
corresponding to permutation symmetries.
If there is a permutation that is not a composition of~2-cycles,
the heuristic stops: no row/column symmetries are detected.
Otherwise, the permutations are partitioned based on the length
of their composition.
If this partition consists of two sets, the heuristic tries to construct a
matrix~$M \in \R^{p \times q}$ such that the permutations of the first and
second set correspond to row and column symmetries of the matrix, respectively.

If row and column symmetries are detected, one can handle symmetries by
enforcing that the rows and columns of the matrix are sorted
lexicographically, cf.\ Section~\ref{sec:handling}.
Moreover, the heuristic also checks whether there is a permutation that
corresponds to the reflection of a single column (or row).
Since all columns (or rows) are symmetric, this means that the
group~$\sgroup$ also contains the remaining column (or row) reflections.
In this case, we can strengthen the symmetry handling approach by
restricting the variable domain of some variables according to
Proposition~\ref{prop:handlerefl}.

To enforce sorted rows and columns in~$M$ in the absence of reflections, we
explicitly add the inequalities~$M_{i,1} \geq M_{i+1,1}$ for all~$i \in [p-1]$ and~$M_{1,j}
\geq M_{1,j+1}$ for all~$j \in [q-1]$, respectively, to the MINLP.
Moreover, we use static orbitopal reduction to sort the rows and columns.
Note that the inequalities are in principle also enforced by orbitopal
reduction.
Preliminary experiments showed, however, that the dual bounds can
improve when adding the inequalities explicitly.
This is in one line with an observation by~\cite{CostaHansenLiberti2013}
that symmetry handling inequalities can strengthen relaxations for
MINLPs.
In the presence of reflection symmetries for columns, we also add
inequalities and use orbitopal reduction to sort the rows, but we restrict
both techniques to the blocks of symmetric rows as indicated by
Proposition~\ref{prop:handlerefl}.
Additionally, we restrict some variable domains to their upper half as
described in Proposition~\ref{prop:handlerefl}.
If there are row reflections instead of column reflections, we proceed
analogously.

\paragraph{Row Symmetries.}
To detect row symmetries, we use an analogous mechanism as for row and
column symmetries.
The only difference is that the heuristic can only be successful if all
permutations have the same number of~2-cycles.

In the presence of column reflections, row symmetries are handled
analogously to the case of row and column symmetries.
If no column reflections are detected, we only handle row
symmetries if the number of generators in~$\sgroup'$ being permutation
symmetries is greater than~\SI{80}{\percent}.
Our reasoning is that if the percentage of permutation symmetries is too
low, we potentially miss many symmetry reductions being implied by
proper reflection symmetries.
In this case, we proceed with general techniques as explained in the
next paragraph.

If row symmetries for a matrix~$M \in \R^{p \times q}$ are handled, we use
dynamic orbitopal fixing besides the following exceptions:
\begin{itemize}
\item If there is only one column, we add the
  inequalities~$M_{1,1}\geq \dots \geq M_{p,1}$ to fully
  handle the symmetries.

\item If there are exactly two rows, we use lexicographic reduction to
  handle the symmetries of the corresponding permutation exchanging the
  two rows.

\item Suppose there are at least three rows, and there are at least three columns
  containing only binary variables such that other problem constraints
  enforce that the column sums are exactly or at most~1.
  Then, we use orbitopal fixing for packing and partitioning
  orbitopes~\cite{KaibelEtAl2011} for the submatrix containing the
  cardinality restricted columns.
\end{itemize}
These are also the default settings in \scip to handle row symmetries.
The motivation for the first, second, and third point is to add strong
symmetry handling inequalities, more flexible symmetry handling techniques,
or techniques exploiting additional problem structure, respectively.
The hope is that these techniques are more powerful than dynamic
orbitopal reduction.

\paragraph{General Symmetries.}
We test two different approaches to handle reflection symmetries.
The first approach uses orbital reduction and lexicographic reduction;
the second approach just adds~\eqref{eq:simpleRefl} if it is
applicable and no other symmetry handling method is used.

To apply the first approach, let~$\Pi'$ be the generators in~$\sgroup'$
that are permutation symmetries.
The group generated by~$\Pi'$ is then handled by orbital reduction
as implemented in \scip.
Moreover, for each generator~$\sperm \in \sgroup'$, we enforce a
lexicographic order constraint~$\perm(x) \lexgeq \sperm(\perm(x))$ by
lexicographic reduction, cf.\ Section~\ref{sec:furthertechniques}.
The reordering~$\perm$ can differ at each node of the
branch-and-bound tree and is defined based on the branching history of the
respective node.
This allows to apply lexicographic reduction and orbital reduction
simultaneously~\cite{DoornmalenHojny2023}.

\paragraph{Hardware and Software Specifications.}
All of the following experiments have been conducted on a Linux cluster
with Intel Xeon E5-1620 v4 \SI{3.5}{\GHz} quad core processors and
\SI{32}{\giga\byte} memory.
The code was executed using a single thread and the time limit for all
computations was \SI{2}{\hour} per instance.
A memory limit of \SI{27}{\giga\byte} has been used per instance.

We use a developers version of the branch-and-bound framework \scip~10.0
(githash a93d088d).
All LP relaxations are solved with \code{Soplex}~8.0 (githash 89ab43a)
and nonlinear problems are solved with
\code{Ipopt}~3.12~\cite{WachterBiegler2006}.
To detect symmetries of \SDGs, we use the software
\code{sassy}~1.1~\cite{AndersEtAl2023}
to preprocess \SDGs; symmetries of the preprocessed \SDGs are computed
using \code{bliss}~0.77~\cite{JunttilaKaski2011}.

\medskip
\noindent
To compare different settings in our experiments, we use, among others,
mean numbers.
All mean numbers for quantities~$t_1,\dots,t_n$ are reported in shifted
geometric mean~$\prod_{i = 1}^n (t_i + s)^{\frac{1}{n}} - s$ to reduce the
impact of outliers.
For mean running times, a shift of~$s = 1$ is used; otherwise, we use a
shift of~$s = 0$, i.e., the classical geometric mean.

\subsection{Results for Structured Instances}
\label{sec:numericsApplications}

To answer Question~\ref{Q1}, we consider four different classes of
instances with different types of nonlinearities that all admit reflection
symmetries.
Moreover, three of them admit row and column symmetries.
The four considered problems, which we discuss next, are a geometric
packing problem, the kissing number problem, an energy minimization
problem, and the max-cut problem.

\paragraph{Geometric Packing Problem.}
Let~$d$ and~$n$ be positive integer numbers.
We aim to find the largest real number~$r$ such that~$n$
non-overlapping $d$-dimensional~$\ell_1$-balls of radius~$r$ can be packed
into~$[-1,1]^d$.
This problem can be modeled as
\begin{align*}
  \max r &&&\\
  \sum_{i = 1}^d \abs{x^s_i - x^t_i} &\geq 2r, && s,t \in [n],\; s < t,\\
  -1 + r \leq x^s_i &\leq 1 - r, && s \in [n],\; i \in [d],
\end{align*}
where~$r$ models the radius of the balls and~$x^s_i$, $(i,s) \in [d] \times
[n]$, is the~$i$-th coordinate of the~$s$-th ball's center.
Interpreting~$x^s_i$ as the entries of an~$n \times d$ matrix, this
formulation admits row and column symmetries as well as reflection
symmetries of individual columns.
Although this model can easily be linearized, we do not do so to check
whether symmetry detection works for constraints involving absolute
values.

\paragraph{Kissing Number Problem.}
Let~$d$ and~$n$ be positive integers and let~$S$ be the unit sphere in~$\R^d$
w.r.t.\ the~$\ell_2$-norm that is centered at the origin.
The kissing number problem asks whether~$n$ unit spheres can be placed
in~$\R^n$ such that each of the~$n$ spheres touches~$S$ and neither pair of
the~$n$ spheres intersects.
An optimization variant of this problem is looking for the maximum distance
of the center points of the~$n$ spheres~\cite{Liberti2012}:
\begin{align*}
  \max \alpha &&&\\
  \sum_{i = 1}^d (x^s_i)^2 &= 4, && s \in [n],\\
  8 - 2 \sum_{i = 1}^d x^s_ix^t_i &\geq 4\alpha, && s,t \in [n],\; s < t,\\
  -2 \leq x^s_i &\leq 2, && s \in [n],\; i \in [d],\\
  0 \leq \alpha &\leq 1, &&
\end{align*}
where the interpretation of~$x^s_i$ is the same as for the geometric
packing problem.
The second constraint indeed measures the distance
between two center points as~$\|x^s - x^t\|_2^2 = 8 - 2\sum_{i = 1}^d
x^s_ix^t_i$.
As before, this problem admits row and column symmetries with column reflections.

\paragraph{Energy Minimization Problem.}
In the energy minimization problem, we consider~$n$ particles that need to
be distributed on a~$d$-dimensional unit sphere~\cite{SaffKuijlaars1997}.
If~$x^s_i$ models the~$i$-th coordinate of the~$s$-th particle, the
energy~$\frac{1}{\sum_{i = 1}^d (x^s_i - x^t_i)^2}$ is needed to keep
particles~$s, t \in [n]$ at their distance.
The goal is to distribute the particles such that their total energy is
minimized, i.e., to solve
\begin{align*}
  \min \sum_{s = 1}^n \sum_{t = s+1}^n \frac{1}{\sum_{i = 1}^d (x^s_i - x^t_i)^2} &&&\\
  \sum_{i = 1}^d (x^s_i)^2 &= 1, && s \in [n],\\
  -1 \leq x^s_i &\leq 1, && s \in [n],\; i \in [d].
\end{align*}
Also this problem admits row and column symmetries with column reflections.

\paragraph{Max-Cut.}
Given an undirected graph~$G = (V,E)$, the maximum cut problem asks for a
partition of the nodes~$V$ into two sets such that the number of edges
between the two sets is maximized.
This problem can be modeled as
\begin{align*}
  \max \sum_{e \in E} y_e &&&\\
  x_u + x_v + y_{\{u,v\}} &\leq 2, && u,v \in V,\\
  -x_u - x_v + y_{\{u,v\}} &\leq 0, && u,v \in V,\\
  x_v &\in \{0,1\}, && v \in V,\\
  y_e &\in \{0,1\}, && e \in E,
\end{align*}
where~$x_v$ indicates to which of the two sets node~$v \in V$ belongs to
and~$y_e$ encodes whether edge~$e \in E$ has its endpoints in different
sets of the partition.
This problem admits the reflection that simultaneously maps~$x_v
\mapsto 1 - x_v$ for all~$v \in V$.
Moreover, every automorphism~$\perm$ of~$G$ gives rise to a permutation
symmetry that relabels the indices of~$x$ and~$y$ according to~$\perm$.

\paragraph{Test Sets.}
We consider four test sets corresponding to the aforementioned problems.
We refer to the test sets as ``packing'', ``kissing'', ``energy'', and
``maxcut''.
The packing, kissing, and energy test set contain all instances with~$n$
objects in dimension~$d$, where~$(n,d) \in \{3,\dots,14\} \times \{2,3\}$.
The maxcut test set contains an instance for each graph from the Color02
symposium\footnote{Obtained
  from~\url{https://mat.tepper.cmu.edu/COLOR02/}. We removed the graph
  \texttt{games120} because the different settings hit the memory limit
  while solving the instance.}.

\paragraph{Discussion Question~\ref{Q1}.}
To answer \ref{Q1}, we have computed symmetries for all instances after
presolving.
Table~\ref{tab:statistics} summarizes our results, where column ``total''
contains the total number of instances per test set and ``sym.'' the
number of instances for which symmetries are detected.
Column group ``generators'' lists the number of generators being signed
(``sig.'') and unsigned (``unsig.'') permutations.
Column groups ``row + column'' and ``row/column'' provide the number
of instances containing row and column symmetries, and row or column
symmetries, respectively.
We distinguish whether the rows/columns can be reflected within the columns
(``sig.'' and ``unsig.'').
Finally, column ``simple'' reports on how many instances allow to handle
symmetries via Inequality~\eqref{eq:simpleRefl}.
% Details of the detected symmetries per instance are provided in
% Appendix~\ref{app:symstructure}.

As Table~\ref{tab:statistics} shows, we detect both permutation
symmetries and signed permutation symmetries for all instances in the
packing, kissing, and energy test set.
Except for one instance per test set, row and column symmetries combined
with column reflections are detected for all instances.
The only exception are instances with specification~$(n,d) = (3,3)$,
since both row and column symmetries have the same number of 2-cycles in
this case.
Our heuristic for detecting row and column symmetries thus cannot
distinguish whether a permutation corresponds to a row or column
permutation.
Moreover, for each maxcut instance, we detect signed
permutation symmetries.
In particular, for~25 instances whose graphs are asymmetric, we also detect
the applicability of Inequality~\eqref{eq:simpleRefl}.
We conclude that our symmetry detection framework indeed reliably detects
reflection symmetries.

\begin{table}[t]
  \caption{Statistics on how many instances allow for a particular symmetry handling method.}
  \label{tab:statistics}
  \centering
  \begin{tabular*}{\textwidth}{@{}l@{\;\;\extracolsep{\fill}}rrrrrrrrr@{}}
    \toprule
    & \multicolumn{2}{c}{\# instances} & \multicolumn{2}{c}{generators} & \multicolumn{2}{c}{row + column} & \multicolumn{2}{c}{row/column} & \\
    \cmidrule{2-3} \cmidrule{4-5} \cmidrule{6-7} \cmidrule{8-9}
    test set & total & sym. & sig. & unsig. & sig. & unsig. & sig. & unsig. & simple\\
    \midrule
    \multicolumn{10}{@{}l}{structured instances:}\\
               packing &   24 &   24 &   24 &   24 &   23 &    0 &    0 &    0 &    0\\
               kissing &   24 &   24 &   24 &   24 &   23 &    0 &    0 &    0 &    0\\
                energy &   24 &   24 &   24 &   24 &   23 &    0 &    0 &    0 &    0\\
                maxcut &  119 &  119 &  119 &   94 &    0 &    0 &    0 &    0 &   25\\
    \midrule
    \multicolumn{10}{@{}l}{benchmarking instances:}\\
            miplib2017 & 1055 &  504 &   60 &  485 &    0 &    1 &    0 &  362 &    2\\
              minlplib &  486 &  111 &    6 &  109 &    4 &    0 &    0 &  100 &    2\\
               sat2002 &  819 &  292 &  282 &  125 &    0 &    0 &    9 &   78 &   54\\
    \bottomrule
  \end{tabular*}
\end{table}

\paragraph{Discussion Question~\ref{Q2}.}
To answer~\ref{Q2}, we have conducted experiments on the packing, kissing,
and energy test set.
We compare different ad-hoc approaches, which add symmetry handling
inequalities directly to the problem formulation, with the more
sophisticated techniques for row and column symmetries as described in
Section~\ref{sec:numericsSetup}.
We refer to the latter setting as the ``automatic'' setting that is applied
by \scip.
The remaining settings are inspired by the inequalities
in~\cite{Khajavirad2017} for the disk packing problem, also compare
Proposition~\ref{prop:handlerefl}.
Assuming that the corresponding variable matrix is~$M \in \R^{p
  \times q}$, these settings are:
\begin{description}
\item[sym0:] no symmetry handling inequalities are added;
\item[sym1:] add the inequalities~$M_{1,1} \geq M_{1,2} \geq \dots \geq M_{1,q}$;
\item[sym2:] as sym1, additionally add~$M_{1,1},\dots,M_{1,q} \geq 0$;
\item[sym3:] as sym2, additionally add inequalities~$M_{1,1} \geq M_{2,1}
  \geq \dots \geq M_{p,1}$;
\item[sym4:] using the notation of Proposition~\ref{prop:handlerefl},
  add~$M_{i,j} \geq 0$ for~$j \in [q]$ and~$i \in [n_j]$;
\item[sym5:] as sym4, also add~$M_{n_j + 1,1} \geq M_{n_j + 2} \geq
  \dots \geq M_{n_{j-1},1}$ for~$j \in [q] \cup \{0\}$;
\item[sym6:] combination of sym1 and sym5.
\end{description}
Setting sym0 thus serves as a baseline to compare the different symmetry
handling methods with an approach that does not handle symmetries.
Moreover, note that settings sym1--sym3 and sym4--sym6 form two groups of
settings, in which a higher number indicates to use more symmetry handling
inequalities.

Tables~\ref{tab:packing}--\ref{tab:energy} summarize our results, where
columns ``\# solved'', ``time'', and ``primal-dual'' provide the number of
solved instances, the mean running time, and the mean primal-dual integral
per test set, respectively.
The primal-dual integral is a proxy for the speed of convergence of a
branch-and-bound algorithm (roughly speaking, it measures the area between the
primal and dual bounds plotted against time), see~\cite{Berthold2013} for details.
We included this measure, because we observed a very slow convergence of
the dual bounds for some settings.
Since optimal solutions are found usually rather quickly for these
instances, a long running time but small integral indicates that
almost optimal dual bounds are found early, but closing the gap completely
is challenging.
We rate an instance as solved when the primal-dual gap is
below~$\SI{0.1}{\percent}$.

% generated by evaluate_running_times_nonlinear.py
\begin{table}
  \caption{Comparison of running times and primal-dual integrals for packing test set.}
  \label{tab:packing}
  \centering
  \begin{tabular*}{\textwidth}{@{}l@{\;\;\extracolsep{\fill}}rrrrrr@{}}
    \toprule
    & \multicolumn{3}{c}{dimension 2} & \multicolumn{3}{c}{dimension 3}\\
    \cmidrule{2-4} \cmidrule{5-7}
    setting & \# solved & time & primal-dual & \# solved & time & primal-dual\\
    \midrule
                sym0 &   4 &  605.02 &     14591.6 &   3 & 1928.92 &     43786.1\\
                sym1 &   4 &  561.59 &     12891.3 &   3 & 1633.94 &     34028.4\\
                sym2 &   6 &  457.49 &      8910.5 &   4 & 1308.92 &     21674.9\\
                sym3 &   8 &   89.90 &      1238.4 &   5 &  639.00 &     10042.2\\
                sym4 &   6 &  205.96 &      2150.2 &   5 &  815.80 &      8748.7\\
                sym5 &   9 &   64.74 &       476.3 &   5 &  655.24 &      6845.1\\
                sym6 &   9 &   58.60 &       584.9 &   5 &  508.59 &      5680.2\\
           automatic &   9 &   51.86 &       475.9 &   5 &  516.43 &      5266.5\\
    \bottomrule
  \end{tabular*}
\end{table}

The results show that symmetry handling is an important aspect for solving
the three classes of problems more effectively.
As expected, throughout all test sets (except for 3-dimensional kissing
number problems, which are very easy), we observe that adding more symmetry
handling inequalities to the problem formulation results in better running
times.
Methods sym3 and sym6 are thus the most effective settings within their
groups.
There is, however, no clear trend whether sym3 or sym6 is more effective.
For example, for the kissing number problem and 2-dimensional energy problems, sym3 is faster,
whereas sym6 dominates sym3 for the packing problem and
3-dimensional energy problems.
In contrast, the automatic symmetry handling setting consistently dominates
the other settings on 2-dimensional problems, and on the 3-dimensional
packing problem it is almost the fastest approach.
Compared to the best competitor, the automatic symmetry handling setting is
\SI{10.1}{\percent} (2-dimensional packing), \SI{52.0}{\percent} (2-dimensional kissing), and
\SI{2.4}{\percent} (2-dimensional energy) faster.
On 3-dimensional energy problems, however, the automatic setting is slower
than sym3 and sym6.

% generated by evaluate_running_times_nonlinear.py
\begin{table}
  \caption{Comparison of running times and primal-dual integrals for kissing test set.}
  \label{tab:kissing}
  \centering
  \begin{tabular*}{\textwidth}{@{}l@{\;\;\extracolsep{\fill}}rrrrrr@{}}
    \toprule
    & \multicolumn{3}{c}{dimension 2} & \multicolumn{3}{c}{dimension 3}\\
    \cmidrule{2-4} \cmidrule{5-7}
    setting & \# solved & time & primal-dual & \# solved & time & primal-dual\\
    \midrule
                sym0 &   4 &  381.16 &      7814.8 &  10 &    3.60 &        24.5\\
                sym1 &   4 &  400.52 &     10813.1 &  10 &    3.87 &        26.8\\
                sym2 &   5 &  342.43 &      9631.0 &  10 &    3.76 &        32.6\\
                sym3 &  12 &   22.00 &       361.3 &  10 &    3.53 &        17.8\\
                sym4 &   7 &  150.71 &      1885.1 &  10 &    3.69 &        36.0\\
                sym5 &  11 &   32.96 &       546.5 &  10 &    4.79 &        44.5\\
                sym6 &  11 &   24.88 &       330.7 &  10 &    3.80 &        21.9\\
           automatic &  12 &   10.57 &       170.0 &  10 &    3.77 &        21.6\\
    \bottomrule
  \end{tabular*}
\end{table}
% generated by evaluate_running_times_nonlinear.py
\begin{table}
  \caption{Comparison of running times and primal-dual integrals for energy test set.}
  \label{tab:energy}
  \centering
  \begin{tabular*}{\textwidth}{@{}l@{\;\;\extracolsep{\fill}}rrrrrr@{}}
    \toprule
    & \multicolumn{3}{c}{dimension 2} & \multicolumn{3}{c}{dimension 3}\\
    \cmidrule{2-4} \cmidrule{5-7}
    setting & \# solved & time & primal-dual & \# solved & time & primal-dual\\
    \midrule
                sym0 &   3 & 1955.90 &     25054.0 &   0 & 7200.00 &     96744.4\\
                sym1 &   3 & 1686.17 &     21190.0 &   1 & 6444.70 &     77109.6\\
                sym2 &   3 & 1337.50 &     16811.4 &   1 & 5294.92 &     51906.2\\
                sym3 &   6 &  346.94 &      2470.0 &   2 & 4398.29 &     33854.0\\
                sym4 &   5 &  695.68 &      5615.2 &   1 & 5608.33 &     45764.8\\
                sym5 &   7 &  402.21 &      2499.0 &   1 & 5607.37 &     40648.6\\
                sym6 &   6 &  372.11 &      2384.9 &   2 & 3933.79 &     26150.2\\
           automatic &   6 &  338.69 &      1721.1 &   2 & 5275.07 &     32872.1\\
    \bottomrule
  \end{tabular*}
\end{table}

Investigating running times on a per-instance basis, see
Appendix~\ref{app:moretimes} for detailed numbers, reveals that the
automatic setting achieves the best results for the four aforementioned test
sets for instances with a medium number of objects.
For 2-dimensional packing problems with~9 and~10 balls it works
substantially better than the competitors, whereas it works best for the
3-dimensional packing problem with~6 balls, the 2-dimensional kissing
number problem with at least~9 spheres, and~9 points in the 2-dimensional
energy problem.
For instances with less objects, the automatic setting performs slightly
worse than its competitors.
Taking these observations into account might provide an explanation why
the automatic setting performs worse than sym3 and sym6 on 3-dimensional
energy instances:
These instances are very challenging and the automatic setting as well as
sym3 and sym6 only solve the instances with~3 and~4 points within the time
limit of two hours.
Extrapolating the results from the easier test sets thus could indicate
that the automatic setting becomes more effective on instances with more
points.
To verify this hypothesis, we wanted to run the experiments with a higher
time limit.
However, all settings hit the memory limit before solving the more
difficult instances.
We thus could neither refute nor verify the hypothesis.

Besides the comparison of running times, note that, for the four easier
test sets, the automatic setting has consistently a much smaller
primal-dual integral than the best competitor.
Thus, even if the running times are comparable for 3-dimensional packing
and 2-dimensional energy problems, the corresponding primal-dual integral
values are respectively \SI{7.2}{\percent} and~\SI{27.8}{\percent} smaller
for the automatic setting.
The automatic setting hence tends to converge faster than the remaining
methods.

In summary, the automatic setting is a competitive method that performs
better than its competitors sym3 and sym6.
While sym3 and sym6 can easily be incorporated into an optimization model
by adding inequalities to a problem formulation, the automatic setting
requires to implement the more sophisticated technique orbitopal reduction.
The latter might be a technical burden preventing inexperienced practitioners
to make use of this technique.
Due to our symmetry detection framework for reflection symmetries
combined with the heuristics for finding row and column symmetries,
however, a solver can automatically detect that the sophisticated symmetry
handling techniques are applicable.
Users can thus benefit from powerful symmetry handling techniques that are
built in a solver, and are not relying on handling reflection
symmetries on their own.

\paragraph{Discussion of Results for Max-Cut.}
Table~\ref{tab:symmetricresults} summarizes our experiments for, i.a., the
maxcut test set.
For each test set, it mentions in parentheses the total number of instances
of the test set as well as the number of instances that are solved by at
least one setting.
To compare different symmetry handling approaches, we use the mechanism
described in Section~\ref{sec:numericsSetup} and possibly disable some
techniques.
The first four columns of Table~\ref{tab:symmetricresults} indicate which
methods of this mechanism are used.
A cross in column ``sym.'', ``row+col'', ``refl.'', and ``simpl.'' respectively
indicates that symmetry handling is active, row and column symmetries
are detected, reflection symmetries are computed (if not, only permutation
symmetries are computed), and Inequality~\eqref{eq:simpleRefl} is applied when
possible.
To realize setting~$(\times,\times,\times,\times)$, we disable
lexicographic reduction to prevent that reflection symmetries are handled
both by lexicographic reduction and~\eqref{eq:simpleRefl}.
The remaining settings make use of lexicographic reduction if symmetry
handling is enabled.
Column groups ``time'' and ``gap'' report on the mean running times
and average gap for all instances and the solvable instances.
In contrast to the previous experiments, we rate an instance as solved if
its gap is zero (up to numerical tolerances).

% generated by evaluate_running_times_standard.py
\begin{table}
  \caption{Comparison of running times and gaps for symmetric instances.}
  \label{tab:symmetricresults}
  \centering
  \begin{tabular*}{\textwidth}{@{}c@{\;\;\extracolsep{\fill}}cccrrrrr@{}}
    \toprule
    \multicolumn{4}{c}{setting} & & \multicolumn{2}{c}{time} & \multicolumn{2}{c}{gap}\\
    \cmidrule{1-4} \cmidrule{6-7} \cmidrule{8-9}
    sym. & row+col & refl. & simp. & \# solved & all & solved & all & solved\\
    \midrule
    \multicolumn{7}{@{}l}{maxcut (118/41):}\\
             &          &          &          &  40 & 1220.19 &   42.61 &    35.37 &     0.05\\
    $\times$ &          &          &          &  40 & 1059.66 &   28.07 &    34.95 &     0.05\\
    $\times$ & $\times$ &          &          &  40 & 1058.43 &   27.97 &    34.94 &     0.05\\
    $\times$ & $\times$ & $\times$ &          &  41 & 1008.31 &   24.20 &    34.87 &     0.00\\
    $\times$ & $\times$ & $\times$ & $\times$ &  40 & 1070.91 &   28.96 &    35.33 &     0.08\\
    \midrule
    \multicolumn{7}{@{}l}{miplib2017 (59/17):}\\
             &          &          &          &  15 & 2753.85 &  255.53 &  4416.48 &   588.24\\
    $\times$ &          &          &          &  14 & 2881.57 &  299.22 &  4588.75 &   590.82\\
    $\times$ & $\times$ &          &          &  14 & 2883.01 &  299.74 &  4588.75 &   590.82\\
    $\times$ & $\times$ & $\times$ &          &  17 & 2883.41 &  299.89 &  4414.99 &     0.00\\
    $\times$ & $\times$ & $\times$ & $\times$ &  16 & 2782.88 &  265.03 &  4414.41 &     1.95\\
    \midrule
    \multicolumn{7}{@{}l}{minlplib (6/1):}\\
             &          &          &          &   1 & 1640.40 &    0.01 &  3380.72 &     0.00\\
    $\times$ &          &          &          &   1 & 1643.10 &    0.02 &  3379.81 &     0.00\\
    $\times$ & $\times$ &          &          &   1 & 1640.40 &    0.01 &  3375.20 &     0.00\\
    $\times$ & $\times$ & $\times$ &          &   1 & 1643.10 &    0.02 &  3370.42 &     0.00\\
    $\times$ & $\times$ & $\times$ & $\times$ &   1 & 1643.10 &    0.02 &  3370.42 &     0.00\\
    \midrule
    \multicolumn{7}{@{}l}{sat2002 (282/172):}\\
             &          &          &          & 150 &  371.53 &   55.05 &  4680.85 &  1279.07\\
    $\times$ &          &          &          & 152 &  328.78 &   44.90 &  4609.93 &  1162.79\\
    $\times$ & $\times$ &          &          & 153 &  330.29 &   45.24 &  4574.47 &  1104.65\\
    $\times$ & $\times$ & $\times$ &          & 169 &  270.32 &   32.33 &  4007.09 &   174.42\\
    $\times$ & $\times$ & $\times$ & $\times$ & 153 &  354.20 &   50.84 &  4574.47 &  1104.65\\
    \bottomrule
  \end{tabular*}
\end{table}

Comparing the running times of the different settings, we see that handling
reflection symmetries using $(\times,\times,\times,\phantom{\times})$ is
most effective.
It solves one instances more than the remaining settings and reduces the
running time in comparison to not detecting and handling reflection
symmetries by \SI{4.7}{\percent} on all instances and \SI{13.5}{\percent}
on the solvable instances.
The setting $(\times,\times,\times,\times)$, which disables lexicographic
reduction and instead applies~\eqref{eq:simpleRefl} when it is applicable,
still improves on the setting in which no symmetries are handled.
But in comparison to the other symmetry handling approaches, it is the least
performant.
Since the motivation of this setting was to compare the effect
of~\eqref{eq:simpleRefl} with lexicographic reduction, we also separately
considered the~25 instances for which~\eqref{eq:simpleRefl} is applicable
to see whether it has a positive effect there.
None of these instances could be solved within the time limit though.

\subsection{Results for Benchmarking Instances}
\label{sec:numericsBenchmark}

Besides the structured instances discussed in the previous section, we also
conducted experiments on general benchmarking instances.
The test sets that we considered are all instances from MIPLIB2017~\cite{MIPLIB2017} and
MINLPLIB~\cite{MINLPLIB}, as well as the submitted instances of the SAT 2002 Competition~\cite{SAT2002}.
To evaluate the impact of handling reflection symmetries, we removed all
instances from these test sets for which no reflection symmetries could be
detected.
We refer to the corresponding test sets as miplib2017, minlplib, and
sat2002, respectively.

In contrast to the structured instances, we cannot evaluate whether our
framework reliably detects reflection symmetries for benchmarking instances.
Our expectation was that reflection symmetries are rare for linear problems
(miplib2017) and arise frequently for nonlinear problems (minlplib) and SAT
problems.
Indeed, as Table~\ref{tab:statistics} shows, for~282 of the 819 instances
from sat2002, we could detect reflection symmetries, whereas we could find
only~60 instances from miplib2017 admitting reflection symmetries.
Among the~486 instances from minlplib, however, our framework could only
detect~6 instances admitting reflection symmetries.
This came as surprise to us, since MINLPLIB also contains instances corresponding to
geometric packing problems (instances whose names start with ``kall\_'').
Inspecting these instances revealed two explanations for not detecting
the reflection symmetries.
On the one hand, these instances already contain symmetry handling
inequalities.
On the other hand, in contrast to Example~\ref{ex:geompack}, the box in
which the objects need to be placed is not fixed.
Instead, one is looking for a box of minimal dimensions that can fit all
objects.
This is modeled asymmetrically by fixing the lower coordinate value and
introducing a variable to model the upper coordinate value of each
dimension.
That is, although the real world problem admits reflection symmetries, the
corresponding MINLP model is asymmetric.

In the following, we will therefore focus on the miplib2017 and sat2002
instances containing reflection symmetries, since the minlplib test set is
too small to draw reliable conclusions.
The running times are summarized in Table~\ref{tab:symmetricresults}.
Note that the table reports only on~59 instances although
Table~\ref{tab:statistics} shows that there are~60 instances with
reflection symmetries.
To ensure a fair comparison of the different methods, however, we removed
the instance ``tokyometro'' since all but one setting reached the memory
limit.

\paragraph{Discussion of MIPLIB2017}
For miplib2017, we observe that the
$(\times,\times,\times,\phantom{\times})$ setting performs best w.r.t.\ the
number of solved instances.
It can solve~17 instances, while just handling permutation symmetries can
only solve~14 instances, and handling no symmetries at all solves~15
instances.
Regarding the running time, however,
$(\times,\times,\times,\phantom{\times})$ and the settings only handling
permutation symmetries perform equally and are on all instances \SI{4.7}{\percent}
(on the solvable instances~\SI{17.1}{\percent}) slower than not handling
symmetries.
It thus seems that handling reflection symmetries can help
solving more instances, on average, however, it slows down the solving
process.
As such, it is not a surprise that the mean running time of
$(\times,\times,\times,\times)$ is better than the one of
$(\times,\times,\times,\phantom{\times})$.

To understand why not handling symmetries performs better than handling
symmetries, we compared the results for the~17 solvable instances for the
setting in which no symmetries are handled
and~$(\times,\times,\times,\phantom{\times})$.
The following three observations could be made:
\begin{enumerate*}[label=(\roman*)]
\item some instances are rather easy such that an improvement in running
  time is negligible;
\item for the two instances that cannot be solved when not handling
  symmetries, also $(\times, \times, \times, \phantom{\times})$ needed
  about~\SI{5900}{\second} and~\SI{6400}{\second}, respectively.
  That is, also when handling symmetries, the instances remain hard.
\item The dual bound after presolving is (almost) optimal, i.e.,
  it is sufficient to find an optimal solution.
  While the power of symmetry handling lies in pruning symmetric
  subproblems, which allows to more quickly improve the dual bound, it
  seems to hinder \scip in finding feasible or optimal solutions.
\end{enumerate*}
We conclude that, although handling symmetries on benchmarking instances
has a positive effect in general~\cite{PfetschRehn2019}, the characteristics
of instances from MIPLIB2017 that admit reflection symmetries make symmetry
handling less suited to enhance branch-and-bound for these instances.

The second question that arises is why the
setting~$(\times,\times,\times,\phantom{\times})$ has the same mean running
time as~$(\times,\times,\phantom{\times},\phantom{\times})$ although it
solves three more instances.
Inspecting the symmetries that are found by the two different settings, we
observed that the number of generators varies a lot between only detecting
permutation symmetries and also reflection symmetries.
For example, although the detected symmetry group for the instance
\texttt{neos-3004026-krka} is larger when detecting reflection symmetries
($\approx 10^{91.5}$ group elements in comparison to $\approx 10^{90.9}$
for permutation symmetries), the number of generators we get from
\code{bliss} is~35 for reflection symmetries and~64 for permutation symmetries.
When handling symmetries via lexicographic reduction, we thus lose a lot of
potential reductions when computing reflection symmetries.
Moreover, for the instance \texttt{neos-780889}, we obtain the same number
of generators corresponding to permutation symmetries; when handling
reflection symmetries, however, we detect less column/row symmetries.
That is, we miss the potential of specialized algorithms for column/row
symmetries.

For the three additionally solved instances when
handling reflection symmetries, we either find more generators (instance
\texttt{icir07\_tension}) or we detect more row/column symmetries
(instances \texttt{lectsched-1} and \texttt{tanglegram4}).
The explanation for the same mean running time thus indeed seems to be the
variability in the generators returned by \code{bliss}.

\paragraph{Discussion of SAT2002}

On the sat2002 test set, the most effective setting is
$(\times,\times,\times,\phantom{\times})$.
It solves~169 instances, and thus almost all solvable instances, within the
time limit and improves upon only handling permutation symmetries
by~\SI{17.8}{\percent}.
Taking Table~\ref{tab:statistics} into account, this behavior is not
surprising as at most~125 of the~292 reflection symmetric sat2002 instances
contain permutation symmetries.
That is, if reflection symmetries are not handled, a lot of instances
become asymmetric.

% generated by get_sym_time_sat.py
\begin{table}
  \caption{Comparison of running times and number of detected row/column symmetries for solvable sat2002 instances containing permutation symmetries.}
  \label{tab:symmetricresultsSAT}
  \centering
  \begin{tabular*}{\textwidth}{@{}c@{\;\;\extracolsep{\fill}}cccrrr@{}}
    \toprule
    \multicolumn{4}{c}{setting} & & &\\
    \cmidrule{1-4}
    sym. & row+col & refl. & simp. & \# solved & time & \# row/column symmetries\\
    \midrule
    \multicolumn{7}{@{}l}{all instances (76):}\\
             &          &          &          &  72 &   63.10 &    1.00\\
    $\times$ &          &          &          &  74 &   39.67 &   14.87\\
    $\times$ & $\times$ &          &          &  74 &   39.80 &   14.87\\
    $\times$ & $\times$ & $\times$ &          &  74 &   50.57 &    7.01\\
    $\times$ & $\times$ & $\times$ & $\times$ &  74 &   53.28 &    7.01\\
    \midrule
    \multicolumn{7}{@{}l}{feasible instances (27):}\\
             &          &          &          &  27 &   24.06 &    1.00\\
    $\times$ &          &          &          &  25 &   27.90 &    4.59\\
    $\times$ & $\times$ &          &          &  25 &   27.93 &    4.59\\
    $\times$ & $\times$ & $\times$ &          &  27 &   26.88 &    0.44\\
    $\times$ & $\times$ & $\times$ & $\times$ &  27 &   26.93 &    0.44\\
    \midrule
    \multicolumn{7}{@{}l}{infeasible instances (49):}\\
             &          &          &          &  45 &  106.55 &    1.00\\
    $\times$ &          &          &          &  49 &   48.09 &   20.53\\
    $\times$ & $\times$ &          &          &  49 &   48.31 &   20.53\\
    $\times$ & $\times$ & $\times$ &          &  47 &   71.38 &   10.63\\
    $\times$ & $\times$ & $\times$ & $\times$ &  47 &   77.27 &   10.63\\
    \bottomrule
  \end{tabular*}
\end{table}

To allow for a fair comparison between the different symmetry handling
settings, we therefore also considered the subset of all solvable sat2002
instances that contain proper permutation symmetries.
This results in~76 instances and the corresponding results are summarized
in Table~\ref{tab:symmetricresultsSAT}.
On these instances, we observe that handling reflection symmetries on top
of permutation symmetries decreases the performance by~\SI{27.5}{\percent},
and this effect is even more pronounced on the infeasible instances, for
which the running time increases by~\SI{48.4}{\percent}.
A possible explanation for this unexpected behavior is again the variance
in the generators of the symmetry groups reported by \code{bliss}.
While the mean number of row/column symmetries that are detected per instance
are about~20.5 when only detecting permutation symmetries, the number of
row/column symmetries drops to~10.6 when detecting reflection symmetries.
That is, when detecting reflection symmetries, the potential of handling
row/column symmetries by dedicated techniques cannot be exploited.

\subsection{Conclusion and Outlook}

In the introduction, we have formulated four main
goals~\eqref{goal1}--\eqref{goal4}, which also could be achieved in this
article:
Our abstract framework of symmetry detection graphs turned out to be a
flexible mechanism for detecting reflection symmetries in
MINLP and beyond, cf.\ Goal~\eqref{goal1}.
Our open-source implementation could be used to detect reflection
symmetries in many applications, cf.\ Goal~\eqref{goal3}, and the numerical
experiments showed that handling reflection symmetries can be crucial to
accelerate branch-and-bound for specific applications, cf.\
Goal~\eqref{goal4}.
Although we devised methods for handling reflection symmetries, cf.\
Goal~\eqref{goal2}, we noted that the performance improvement due to
handling symmetries heavily depends on the structure of the detected symmetry
groups.
Handling reflection symmetries thus might slow
down the solving process if this prevents our heuristics to detect
row/column symmetries.

This latter observation opens directions for future research.
As we noted in our experiments, the generators of symmetry groups returned
by symmetry detection tools such as \code{bliss} heavily depend on the
structure of the symmetry detection graphs.
Thus, based on the returned generators, our heuristics can fail to detect
row and column symmetries.
To circumvent this issue, it might be promising to develop alternative
approaches for detecting row and column symmetries that depend less on the
structure of generators.
Ideally, one would use an exact mechanism to detect row/column symmetries,
but detecting such symmetries is as hard as the graph isomorphism
problem~\cite{BertholdPfetsch2009}.
A possible future direction could thus be to exploit the specific structure
of the symmetry detection graphs to solve the graph isomorphism problem.

Moreover, for MIPLIB2017, we noted that some problems benefit from not
handling symmetries, because symmetry handling can hinder heuristics to
find feasible solutions.
A naive strategy for feasibility problems is thus to completely disable
symmetry handling.
For infeasible instances, however, this arguably slows down the solving
process, since a larger search space needs to be explored until
infeasibility is detected.
Instead, it could be interesting to investigate means to benefit from
handling symmetries in branch-and-bound, while removing the symmetry-based
restrictions in heuristics.

\medskip
\noindent
\emph{Acknowledgements}
The author thanks Marc E.\ Pfetsch for very valuable discussions on the
choice of the data structure for encoding symmetry detection graphs in
\scip as well as a thorough code review.
This publication is part of the project
``Local Symmetries for Global  Success'' with project number
OCENW.M.21.299 which is financed by the Dutch Research Council (NWO).

\bibliographystyle{spmpsci}      % mathematics and physical sciences

\begin{thebibliography}{10}
\providecommand{\url}[1]{{#1}}
\providecommand{\urlprefix}{URL }
\expandafter\ifx\csname urlstyle\endcsname\relax
  \providecommand{\doi}[1]{DOI~\discretionary{}{}{}#1}\else
  \providecommand{\doi}{DOI~\discretionary{}{}{}\begingroup
  \urlstyle{rm}\Url}\fi

\bibitem{dejavu}
Anders, M., Schweitzer, P.: Parallel computation of combinatorial symmetries.
\newblock In: 29th Annual European Symposium on Algorithms, {ESA} 2021,
  September 6-8, 2021, Lisbon, Portugal (Virtual Conference), \emph{LIPIcs},
  vol. 204, pp. 6:1--6:18. Schloss Dagstuhl - Leibniz-Zentrum f{\"{u}}r
  Informatik (2021).
\newblock \doi{10.4230/LIPIcs.ESA.2021.6}.
\newblock \urlprefix\url{https://doi.org/10.4230/LIPIcs.ESA.2021.6}

\bibitem{AndersEtAl2023}
Anders, M., Schweitzer, P., Stie{\ss}, J.: Engineering a preprocessor for
  symmetry detection.
\newblock CoRR \textbf{abs/2302.06351} (2023).
\newblock \doi{10.48550/arXiv.2302.06351}.
\newblock \urlprefix\url{https://doi.org/10.48550/arXiv.2302.06351}

\bibitem{BabaiLuks1983}
Babai, L., Luks, E.M.: Canonical labeling of graphs.
\newblock In: Proceedings of the fifteenth annual {ACM} symposium on Theory of
  computing - {STOC} {\textquotesingle}83. {ACM} Press (1983).
\newblock \doi{10.1145/800061.808746}

\bibitem{BelottiEtAl2013}
Belotti, P., Kirches, C., Leyffer, S., Linderoth, J., Luedtke, J., Mahajan, A.:
  Mixed-integer nonlinear optimization.
\newblock Acta Numerica \textbf{22}, 1--131 (2013).
\newblock \doi{10.1017/S0962492913000032}

\bibitem{BendottiEtAl2021}
Bendotti, P., Fouilhoux, P., Rottner, C.: Orbitopal fixing for the full
  (sub-)orbitope and application to the unit commitment problem.
\newblock Mathematical Programming \textbf{186}, 337--372 (2021).
\newblock \doi{10.1007/s10107-019-01457-1}

\bibitem{Berthold2013}
Berthold, T.: Measuring the impact of primal heuristics.
\newblock Operations Research Letters \textbf{41}(6), 611--614 (2013).
\newblock \doi{https://doi.org/10.1016/j.orl.2013.08.007}.
\newblock
  \urlprefix\url{https://www.sciencedirect.com/science/article/pii/S0167637713001181}

\bibitem{BertholdPfetsch2009}
Berthold, T., Pfetsch, M.E.: Detecting orbitopal symmetries.
\newblock In: B.~Fleischmann, K.H. Borgwardt, R.~Klein, A.~Tuma (eds.)
  Operations Research Proceedings 2008, pp. 433--438. Springer Berlin
  Heidelberg, Berlin, Heidelberg (2009)

\bibitem{BessiereEtAl2004}
Bessiere, C., Hebrard, E., Hnich, B., Walsh, T.: The complexity of global
  constraints.
\newblock In: Proceedings of the 19th National Conference on AI (2004)

\bibitem{Bodi2013}
B{\"{o}}di, R., Herr, K., Joswig, M.: Algorithms for highly symmetric linear
  and integer programs.
\newblock Mathematical Programming \textbf{137}(1), 65--90 (2013).
\newblock \doi{10.1007/s10107-011-0487-6}.
\newblock \urlprefix\url{http://dx.doi.org/10.1007/s10107-011-0487-6}

\bibitem{bolusani2024scip}
Bolusani, S., Besançon, M., Bestuzheva, K., Chmiela, A., Dionísio, J.,
  Donkiewicz, T., van Doornmalen, J., Eifler, L., Ghannam, M., Gleixner, A.,
  Graczyk, C., Halbig, K., Hedtke, I., Hoen, A., Hojny, C., van~der Hulst, R.,
  Kamp, D., Koch, T., Kofler, K., Lentz, J., Manns, J., Mexi, G., Mühmer, E.,
  Pfetsch, M.E., Schlösser, F., Serrano, F., Shinano, Y., Turner, M.,
  Vigerske, S., Weninger, D., Xu, L.: The {SCIP} {Optimization} {Suite} 9.0
  (2024)

\bibitem{Cohen2003}
Cohen, J.S.: Computer algebra and symbolic computation: mathematical methods.
\newblock AK Peters, Natick, Massachusetts (2003)

\bibitem{CostaHansenLiberti2013}
Costa, A., Hansen, P., Liberti, L.: On the impact of symmetry-breaking
  constraints on spatial branch-and-bound for circle packing in a square.
\newblock Discrete Applied Mathematics \textbf{161}(1), 96--106 (2013).
\newblock \doi{https://doi.org/10.1016/j.dam.2012.07.020}.
\newblock
  \urlprefix\url{https://www.sciencedirect.com/science/article/pii/S0166218X12002855}

\bibitem{DoornmalenHojny2023}
van Doornmalen, J., Hojny, C.: A unified framework for symmetry handling
  (2023).
\newblock \doi{10.48550/arXiv.2211.01295}

\bibitem{DoornmalenHojny2024}
van Doornmalen, J., Hojny, C.: Efficient propagation techniques for handling
  cyclic symmetries in binary programs.
\newblock INFORMS Journal on Computing \textbf{0}(0), null (2024).
\newblock \doi{10.1287/ijoc.2022.0060}

\bibitem{FlenerEtAl2002}
Flener, P., Frisch, A.M., Hnich, B., Kiziltan, Z., Miguel, I., Pearson, J.,
  Walsh, T.: Breaking row and column symmetries in matrix models.
\newblock In: P.~Van~Hentenryck (ed.) Principles and Practice of Constraint
  Programming - CP 2002, pp. 462--477. Springer Berlin Heidelberg, Berlin,
  Heidelberg (2002)

\bibitem{Friedman2007}
Friedman, E.J.: Fundamental domains for integer programs with symmetries.
\newblock In: A.~Dress, Y.~Xu, B.~Zhu (eds.) Combinatorial Optimization and
  Applications, \emph{Lecture Notes in Computer Science}, vol. 4616, pp.
  146--153. Springer Berlin Heidelberg (2007).
\newblock \doi{10.1007/978-3-540-73556-4_17}

\bibitem{MIPLIB2017}
Gleixner, A., Hendel, G., Gamrath, G., Achterberg, T., Bastubbe, M., Berthold,
  T., Christophel, P.M., Jarck, K., Koch, T., Linderoth, J., L\"ubbecke, M.,
  Mittelmann, H.D., Ozyurt, D., Ralphs, T.K., Salvagnin, D., Shinano, Y.:
  {MIPLIB 2017: Data-Driven Compilation of the 6th Mixed-Integer Programming
  Library}.
\newblock Mathematical Programming Computation pp. 443--490 (2021).
\newblock \doi{10.1007/s12532-020-00194-3}.
\newblock \urlprefix\url{https://doi.org/10.1007/s12532-020-00194-3}

\bibitem{supplement}
Hojny, C.: Supplementary material for the article ``{Detecting} and handling
  reflection symmetries in mixed-integer (nonlinear) programming''.
\newblock \url{https://doi.org/10.5281/zenodo.11189482}

\bibitem{Hojny2020}
Hojny, C.: Packing, partitioning, and covering symresacks.
\newblock Discrete Applied Mathematics \textbf{283}, 689--717 (2020).
\newblock \doi{10.1016/j.dam.2020.03.002}

\bibitem{Hojny2020a}
Hojny, C.: Polynomial size {IP} formulations of knapsack may require
  exponentially large coefficients.
\newblock Operations Research Letters \textbf{48}(5), 612--618 (2020).
\newblock \doi{https://doi.org/10.1016/j.orl.2020.07.013}.
\newblock
  \urlprefix\url{http://www.sciencedirect.com/science/article/pii/S0167637720301103}

\bibitem{HojnyPfetsch2019}
Hojny, C., Pfetsch, M.E.: Polytopes associated with symmetry handling.
\newblock Mathematical Programming \textbf{175}, 197--240 (2019).
\newblock \doi{10.1007/s10107-018-1239-7}

\bibitem{bliss}
Junttila, T., Kaski, P.: Conflict propagation and component recursion for
  canonical labeling.
\newblock In: A.~Marchetti{-}Spaccamela, M.~Segal (eds.) Theory and Practice of
  Algorithms in (Computer) Systems -- First International {ICST} Conference,
  {TAPAS} 2011, Rome, Italy, April 18--20, 2011. Proceedings, \emph{Lecture
  Notes in Computer Science}, vol. 6595, pp. 151--162. Springer (2011).
\newblock \doi{10.1007/978-3-642-19754-3\_16}

\bibitem{JunttilaKaski2011}
Junttila, T., Kaski, P.: Conflict propagation and component recursion for
  canonical labeling.
\newblock In: A.~Marchetti{-}Spaccamela, M.~Segal (eds.) Theory and Practice of
  Algorithms in (Computer) Systems -- First International {ICST} Conference,
  {TAPAS} 2011, Rome, Italy, April 18--20, 2011. Proceedings, \emph{Lecture
  Notes in Computer Science}, vol. 6595, pp. 151--162. Springer (2011).
\newblock \doi{10.1007/978-3-642-19754-3\_16}

\bibitem{KaibelEtAl2011}
Kaibel, V., Peinhardt, M., Pfetsch, M.E.: Orbitopal fixing.
\newblock Discrete Optimization \textbf{8}(4), 595--610 (2011).
\newblock \doi{http://dx.doi.org/10.1016/j.disopt.2011.07.001}

\bibitem{KaibelPfetsch2008}
Kaibel, V., Pfetsch, M.E.: Packing and partitioning orbitopes.
\newblock Mathematical Programming \textbf{114}(1), 1--36 (2008).
\newblock \doi{10.1007/s10107-006-0081-5}

\bibitem{Khajavirad2017}
Khajavirad, A.: Packing circles in a square: a theoretical comparison of
  various convexification techniques (2017).
\newblock \urlprefix\url{https://optimization-online.org/?p=14462}

\bibitem{KorteVygen2018}
Korte, B., Vygen, J.: Combinatorial Optimization: Theory and Algorithms, 6 edn.
\newblock Springer, Heidelberg (2018)

\bibitem{Liberti2008}
Liberti, L.: Automatic generation of symmetry-breaking constraints.
\newblock In: Combinatorial optimization and applications, \emph{Lecture Notes
  in Computer Science}, vol. 5165, pp. 328--338. Springer, Berlin (2008).
\newblock \doi{10.1007/978-3-540-85097-7_31}

\bibitem{Liberti2012}
Liberti, L.: Symmetry in mathematical programming.
\newblock In: J.~Lee, S.~Leyffer (eds.) Mixed Integer Nonlinear Programming,
  \emph{IMA Series}, vol. 154, pp. 263--283. Springer New York (2011).
\newblock \doi{10.1007/978-1-4614-1927-3_9}

\bibitem{Liberti2012a}
Liberti, L.: Reformulations in mathematical programming: automatic symmetry
  detection and exploitation.
\newblock Mathematical Programming \textbf{131}(1-2), 273--304 (2012).
\newblock \doi{10.1007/s10107-010-0351-0}.
\newblock \urlprefix\url{http://dx.doi.org/10.1007/s10107-010-0351-0}

\bibitem{LibertiOstrowski2014}
Liberti, L., Ostrowski, J.: Stabilizer-based symmetry breaking constraints for
  mathematical programs.
\newblock Journal of Global Optimization \textbf{60}, 183--194 (2014)

\bibitem{LinderothEtAl2021}
Linderoth, J., N\'{u}\~{n}ez Ares, J., Ostrowski, J., Rossi, F., Smriglio, S.:
  Orbital conflict: Cutting planes for symmetric integer programs.
\newblock INFORMS Journal on Optimization \textbf{3}(2), 139--153 (2021).
\newblock \doi{10.1287/ijoo.2019.0044}

\bibitem{Margot2002}
Margot, F.: Pruning by isomorphism in branch-and-cut.
\newblock Mathematical Programming \textbf{94}(1), 71--90 (2002).
\newblock \doi{10.1007/s10107-002-0358-2}

\bibitem{Margot2003}
Margot, F.: Exploiting orbits in symmetric {ILP}.
\newblock Mathematical Programming \textbf{98}(1--3), 3--21 (2003).
\newblock \doi{10.1007/s10107-003-0394-6}

\bibitem{Margot2010}
Margot, F.: Symmetry in integer linear programming.
\newblock In: M.~J{\"u}nger, T.M. Liebling, D.~Naddef, G.L. Nemhauser, W.R.
  Pulleyblank, G.~Reinelt, G.~Rinaldi, L.A. Wolsey (eds.) 50 Years of Integer
  Programming, pp. 647--686. Springer (2010)

\bibitem{nauty}
McKay, B.D., Piperno, A.: Practical graph isomorphism, {II}.
\newblock Journal of Symbolic Computation \textbf{60}, 94--112 (2014).
\newblock \doi{https://doi.org/10.1016/j.jsc.2013.09.003}

\bibitem{MINLPLIB}
Minlplib: A library of mixed-integer and continuous nonlinear programming
  instances.
\newblock \url{http://minlplib.org/index.html}

\bibitem{Ostrowski2009}
Ostrowski, J.: Symmetry in integer programming.
\newblock {PhD} dissertation, Lehigh University (2009)

\bibitem{OstrowskiEtAl2011}
Ostrowski, J., Linderoth, J., Rossi, F., Smriglio, S.: Orbital branching.
\newblock Mathematical Programming \textbf{126}(1), 147--178 (2011).
\newblock \doi{10.1007/s10107-009-0273-x}

\bibitem{PfetschRehn2019}
Pfetsch, M.E., Rehn, T.: A computational comparison of symmetry handling
  methods for mixed integer programs.
\newblock Mathematical Programming Computation \textbf{11}(1), 37--93 (2019).
\newblock \doi{10.1007/s12532-018-0140-y}

\bibitem{Puget2005}
Puget, J.F.: Automatic Detection of Variable and Value Symmetries, pp.
  475--489.
\newblock Springer Berlin Heidelberg, Berlin, Heidelberg (2005).
\newblock \doi{10.1007/11564751_36}.
\newblock \urlprefix\url{http://dx.doi.org/10.1007/11564751_36}

\bibitem{SaffKuijlaars1997}
Saff, E., Kuijlaars, A.: Distributing many points on a sphere.
\newblock The Mathematical Intelligencer \textbf{19}, 5--11 (1997)

\bibitem{Sakallah2021}
Sakallah, K.A.: Handbook of Satisfiability, Editors: Armin Biere, Marijn Heule,
  Hans van Maaren, and Toby Walsh, chap. Symmetry and Satisfiability.
\newblock IOS Press (2021)

\bibitem{Salvagnin2005}
Salvagnin, D.: A dominance procedure for integer programming.
\newblock Master’s thesis, University of Padova, Padova, Italy  (2005)

\bibitem{Salvagnin2018}
Salvagnin, D.: Symmetry breaking inequalities from the {Schreier-Sims} table.
\newblock In: W.J. van Hoeve (ed.) Integration of Constraint Programming,
  Artificial Intelligence, and Operations Research, pp. 521--529. Springer
  International Publishing (2018).
\newblock \doi{10.1007/978-3-319-93031-2_37}

\bibitem{SAT2002}
Sat 2002 competition: problem instances.
\newblock
  \url{https://www.cs.ubc.ca/~hoos/SATLIB/Benchmarks/SAT/New/Competition-02/sat-2002-beta.tgz}

\bibitem{Szabo2005}
Szab{\'o}, P.G., Mark{\'o}t, M.C., Csendes, T.: Global Optimization in Geometry
  --- Circle Packing into the Square, pp. 233--265.
\newblock Springer US, Boston, MA (2005).
\newblock \doi{10.1007/0-387-25570-2_9}

\bibitem{WachterBiegler2006}
W\"achter, A., Biegler, L.T.: On the implementation of an interior-point filter
  line-search algorithm for large-scale nonlinear programming.
\newblock Mathematical Programming \textbf{106}, 25--57 (2006).
\newblock \doi{https://doi.org/10.1007/s10107-004-0559-y}

\bibitem{Zhu2006}
Zhu, W.: Unsolvability of some optimization problems.
\newblock Applied Mathematics and Computation \textbf{174}(2), 921--926 (2006).
\newblock \doi{https://doi.org/10.1016/j.amc.2005.05.025}

\end{thebibliography}

\appendix

\section{Overview of Important Functions to Apply Our Symmetry Detection Framework}
\label{app:functions}

This appendix provides an overview of the most important functions needed
to extend an SDG within a \scip symmetry detection callback.
Since our implementation of SDGs allows for four different types of nodes,
we have different functions for adding these nodes:

\medskip
\begin{tabular*}{\textwidth}{@{}lL{6cm}@{}}
  SCIPaddSymgraphOpnode() & adds an operator node to an SDG;\\
  SCIPaddSymgraphValnode() & adds a numerical value node to an SDG;\\
  SCIPaddSymgraphConsnode() & adds a constraint node to an SDG.
\end{tabular*}

\medskip
\noindent
Recall that we do not allow to add variable nodes to an SDG, because \scip
ensures that every SDG contains all necessary variable nodes.
Instead, the indices of variable nodes can be accessed via the functions

\medskip
\begin{tabular*}{\textwidth}{@{}lL{6cm}@{}}
  SCIPgetSymgraphVarnodeidx() & returns the index of the node
  corresponding to a given variable;\\
  SCIPgetSymgraphNegatedVarnodeidx() & returns the index of the node
  corresponding to a negated/reflected variable.
\end{tabular*}

\medskip
\noindent
To add edges to a graph, the function

\medskip
\begin{tabular*}{\textwidth}{@{}lL{6cm}@{}}
  SCIPaddSymgraphEdge() & adds an edge between two existing nodes of an SDG
\end{tabular*}

\medskip
\noindent
can be used.

To simplify the usage of SDGs, we also provide two functions that add gadgets
for certain variable structures to an SDG:

\medskip
\begin{tabular*}{\textwidth}{@{}lL{5cm}@{}}
  SCIPextendPermsymDetectionGraphLinear() & adds a gadget for a linear
                                            expression~$\sprod{a}{x} + b$
                                            to an SDG;\\
  SCIPaddSymgraphVarAggregation() & adds a gadget for aggregated variables
                                    to an SDG.
\end{tabular*}

\medskip
\noindent
The second function has been introduced, since we require that no
aggregated or fixed variables are present in an SDG.

\clearpage
\section{Detailed Numerical Results}
\label{app:moretimes}

In this appendix, we provide detailed numerical results for the tested
problem classes.
Tables~\ref{tab:detailspacking2}--\ref{tab:detailsenergy3} report on
the running times and primal-dual integrals for each instance of the 2- and
3-dimensional packing, kissing number, and energy problems that we
discussed in Section~\ref{sec:numericsApplications}.
The number of items corresponds to the number of balls, spheres, and points
in these respective problems, whereas the settings refer to the settings
sym0--sym6 and the automatic setting as described in
Section~\ref{sec:numericsApplications}.

% generated by evaluate_running_times_nonlinear.py
\begin{table}[h]
  \caption{Running times and primal-dual integrals for packing test set and dimension 2.}
  \label{tab:detailspacking2}
  \centering
  \scriptsize
  \begin{tabular*}{\textwidth}{@{}l@{\;\;\extracolsep{\fill}}rrrrrrrr@{}}
    \toprule
    & \multicolumn{8}{c}{setting}\\
    \cmidrule{2-9}
    \# items & sym0 & sym1 & sym2 & sym3 & sym4 & sym5 & sym6 & auto.\\
    \midrule
    \multicolumn{9}{@{}l}{running time in seconds:}\\
     3 & \num{   0.12} & \num{   0.12} & \num{   0.11} & \num{   0.05} & \num{   0.09} & \num{   0.07} & \num{   0.09} & \num{   0.09}\\
     4 & \num{   2.84} & \num{   1.94} & \num{   0.67} & \num{   0.30} & \num{   0.44} & \num{   0.38} & \num{   0.29} & \num{   0.28}\\
     5 & \num{   0.79} & \num{   0.59} & \num{   0.38} & \num{   0.17} & \num{   0.30} & \num{   0.31} & \num{   0.22} & \num{   0.21}\\
     6 & \num{  43.09} & \num{  25.56} & \num{   7.41} & \num{   0.61} & \num{   0.50} & \num{   0.40} & \num{   0.70} & \num{   0.68}\\
     7 & \num{7200.00} & \num{7200.00} & \num{6606.63} & \num{  22.81} & \num{ 201.83} & \num{  16.16} & \num{  11.01} & \num{  16.23}\\
     8 & \num{7200.00} & \num{7200.00} & \num{4352.59} & \num{  10.14} & \num{  70.32} & \num{  22.82} & \num{   6.68} & \num{  12.14}\\
     9 & \num{7200.00} & \num{7200.00} & \num{7200.00} & \num{ 644.78} & \num{7200.00} & \num{ 249.99} & \num{ 365.62} & \num{ 103.95}\\
    10 & \num{7200.00} & \num{7200.00} & \num{7200.00} & \num{ 267.77} & \num{7200.00} & \num{ 173.67} & \num{ 153.78} & \num{  61.42}\\
    11 & \num{7200.00} & \num{7200.00} & \num{7200.00} & \num{7200.00} & \num{7200.00} & \num{7200.00} & \num{7200.00} & \num{7200.00}\\
    12 & \num{7200.00} & \num{7200.00} & \num{7200.00} & \num{7200.00} & \num{7200.00} & \num{7200.00} & \num{7200.00} & \num{7200.00}\\
    13 & \num{7200.00} & \num{7200.00} & \num{7200.00} & \num{7200.00} & \num{7200.00} & \num{ 358.37} & \num{ 351.27} & \num{ 301.84}\\
    14 & \num{7200.00} & \num{7200.00} & \num{7200.00} & \num{7200.00} & \num{7200.00} & \num{7200.00} & \num{7200.00} & \num{7200.00}\\
    \midrule
    \multicolumn{9}{@{}l}{primal-dual integral:}\\
     3 & \num{      5} & \num{      5} & \num{      6} & \num{      2} & \num{      3} & \num{      2} & \num{      3} & \num{      4}\\
     4 & \num{     35} & \num{     28} & \num{     16} & \num{     11} & \num{      9} & \num{      9} & \num{      8} & \num{      8}\\
     5 & \num{     29} & \num{     23} & \num{     19} & \num{     10} & \num{     14} & \num{     14} & \num{     12} & \num{     11}\\
     6 & \num{    802} & \num{    527} & \num{    151} & \num{     22} & \num{     12} & \num{      9} & \num{     26} & \num{     24}\\
     7 & \num{ 120899} & \num{ 109585} & \num{  31757} & \num{    210} & \num{    941} & \num{    109} & \num{    133} & \num{    123}\\
     8 & \num{ 145839} & \num{ 118007} & \num{  52138} & \num{    285} & \num{   1489} & \num{    227} & \num{     90} & \num{    120}\\
     9 & \num{ 234515} & \num{ 269194} & \num{ 152641} & \num{   6449} & \num{  67037} & \num{   1741} & \num{   2826} & \num{    667}\\
    10 & \num{ 266727} & \num{ 224203} & \num{ 229408} & \num{   5454} & \num{  73878} & \num{   2567} & \num{   1027} & \num{    754}\\
    11 & \num{ 355432} & \num{ 344742} & \num{ 334154} & \num{ 154772} & \num{ 102571} & \num{  66800} & \num{  74434} & \num{  50360}\\
    12 & \num{ 361376} & \num{ 353135} & \num{ 351864} & \num{ 199890} & \num{ 128913} & \num{  20578} & \num{  77572} & \num{  68018}\\
    13 & \num{ 368337} & \num{ 354376} & \num{ 353221} & \num{ 190939} & \num{  98344} & \num{   7071} & \num{   8769} & \num{   6855}\\
    14 & \num{ 408953} & \num{ 408172} & \num{ 405794} & \num{ 277031} & \num{ 200833} & \num{  46888} & \num{ 128374} & \num{ 105141}\\
    \bottomrule
  \end{tabular*}
\end{table}
% generated by evaluate_running_times_nonlinear.py
\begin{table}
  \caption{Running times and primal-dual integrals for packing test set and dimension 3.}
  \label{tab:detailspacking3}
  \centering
  \scriptsize
  \begin{tabular*}{\textwidth}{@{}l@{\;\;\extracolsep{\fill}}rrrrrrrr@{}}
    \toprule
    & \multicolumn{8}{c}{setting}\\
    \cmidrule{2-9}
    \# items & sym0 & sym1 & sym2 & sym3 & sym4 & sym5 & sym6 & auto.\\
    \midrule
    \multicolumn{9}{@{}l}{running time in seconds:}\\
     3 & \num{   2.09} & \num{   0.93} & \num{   0.35} & \num{   0.29} & \num{   0.22} & \num{   0.25} & \num{   0.27} & \num{   0.41}\\
     4 & \num{   1.49} & \num{   0.85} & \num{   0.29} & \num{   0.29} & \num{   0.31} & \num{   0.31} & \num{   0.29} & \num{   0.28}\\
     5 & \num{7200.00} & \num{7200.00} & \num{5921.24} & \num{ 380.15} & \num{ 626.00} & \num{ 414.19} & \num{  67.69} & \num{  73.69}\\
     6 & \num{6663.82} & \num{1961.23} & \num{ 341.28} & \num{  10.73} & \num{  34.86} & \num{  13.46} & \num{   8.40} & \num{   5.87}\\
     7 & \num{7200.00} & \num{7200.00} & \num{7200.00} & \num{ 631.20} & \num{2443.29} & \num{ 645.28} & \num{ 287.70} & \num{ 395.11}\\
     8 & \num{7200.00} & \num{7200.00} & \num{7200.00} & \num{7200.00} & \num{7200.00} & \num{7200.00} & \num{7200.00} & \num{7200.00}\\
     9 & \num{7200.00} & \num{7200.00} & \num{7200.00} & \num{7200.00} & \num{7200.00} & \num{7200.00} & \num{7200.00} & \num{7200.00}\\
    10 & \num{7200.00} & \num{7200.00} & \num{7200.00} & \num{7200.00} & \num{7200.00} & \num{7200.00} & \num{7200.00} & \num{7200.00}\\
    11 & \num{7200.00} & \num{7200.00} & \num{7200.00} & \num{7200.00} & \num{7200.00} & \num{7200.00} & \num{7200.00} & \num{7200.00}\\
    12 & \num{7200.00} & \num{7200.00} & \num{7200.00} & \num{7200.00} & \num{7200.00} & \num{7200.00} & \num{7200.00} & \num{7200.00}\\
    13 & \num{7200.00} & \num{7200.00} & \num{7200.00} & \num{7200.00} & \num{7200.00} & \num{7200.00} & \num{7200.00} & \num{7200.00}\\
    14 & \num{7200.00} & \num{7200.00} & \num{7200.00} & \num{7200.00} & \num{7200.00} & \num{7200.00} & \num{7200.00} & \num{7200.00}\\
    \midrule
    \multicolumn{9}{@{}l}{primal-dual integral:}\\
     3 & \num{     17} & \num{     11} & \num{      9} & \num{      6} & \num{      5} & \num{      6} & \num{      7} & \num{      9}\\
     4 & \num{     19} & \num{     12} & \num{      5} & \num{     11} & \num{     10} & \num{     10} & \num{      9} & \num{      8}\\
     5 & \num{  52376} & \num{  22897} & \num{   5366} & \num{    437} & \num{    566} & \num{    421} & \num{     86} & \num{     88}\\
     6 & \num{  48670} & \num{  15778} & \num{   2794} & \num{    210} & \num{    474} & \num{    128} & \num{    126} & \num{     94}\\
     7 & \num{ 154538} & \num{ 164221} & \num{ 110088} & \num{   6448} & \num{  10830} & \num{   3605} & \num{   2901} & \num{   2755}\\
     8 & \num{ 259241} & \num{ 231341} & \num{ 176139} & \num{ 133512} & \num{ 126275} & \num{ 120202} & \num{ 106350} & \num{ 106084}\\
     9 & \num{ 281270} & \num{ 271892} & \num{ 224430} & \num{ 154967} & \num{  92668} & \num{  66342} & \num{  79984} & \num{  69743}\\
    10 & \num{ 319452} & \num{ 304708} & \num{ 285611} & \num{ 206888} & \num{ 154403} & \num{ 158242} & \num{ 138025} & \num{ 127942}\\
    11 & \num{ 342112} & \num{ 334613} & \num{ 314555} & \num{ 251442} & \num{ 144333} & \num{ 128962} & \num{ 117638} & \num{ 115709}\\
    12 & \num{ 361785} & \num{ 345513} & \num{ 334208} & \num{ 267405} & \num{ 171725} & \num{ 163406} & \num{ 166023} & \num{ 145259}\\
    13 & \num{ 362833} & \num{ 355450} & \num{ 341441} & \num{ 277927} & \num{ 143289} & \num{ 173997} & \num{ 127984} & \num{ 115555}\\
    14 & \num{ 373245} & \num{ 388226} & \num{ 343286} & \num{ 299399} & \num{ 227500} & \num{ 207009} & \num{ 208462} & \num{ 153976}\\
    \bottomrule
  \end{tabular*}
\end{table}

% generated by evaluate_running_times_nonlinear.py
\begin{table}
  \caption{Running times and primal-dual integrals for kissing test set and dimension 2.}
  \label{tab:detailskissing2}
  \centering
  \scriptsize
  \begin{tabular*}{\textwidth}{@{}l@{\;\;\extracolsep{\fill}}rrrrrrrr@{}}
    \toprule
    & \multicolumn{8}{c}{setting}\\
    \cmidrule{2-9}
    \# items & sym0 & sym1 & sym2 & sym3 & sym4 & sym5 & sym6 & auto.\\
    \midrule
    \multicolumn{9}{@{}l}{running time in seconds:}\\
     3 & \num{   0.02} & \num{   0.03} & \num{   0.08} & \num{   0.01} & \num{   0.06} & \num{   0.02} & \num{   0.04} & \num{   0.04}\\
     4 & \num{   0.07} & \num{   0.18} & \num{   0.16} & \num{   0.14} & \num{   0.10} & \num{   0.15} & \num{   0.16} & \num{   0.17}\\
     5 & \num{   0.06} & \num{   0.35} & \num{   0.28} & \num{   0.17} & \num{   0.39} & \num{   0.29} & \num{   0.17} & \num{   0.17}\\
     6 & \num{   0.16} & \num{   0.48} & \num{   0.47} & \num{   0.23} & \num{   0.31} & \num{   0.61} & \num{   0.61} & \num{   0.26}\\
     7 & \num{7200.00} & \num{7200.00} & \num{1136.23} & \num{   2.36} & \num{  22.42} & \num{   3.62} & \num{   1.75} & \num{   2.72}\\
     8 & \num{7200.00} & \num{7200.00} & \num{7200.00} & \num{   5.89} & \num{ 340.06} & \num{  18.33} & \num{   6.34} & \num{   6.11}\\
     9 & \num{7200.00} & \num{7200.00} & \num{7200.00} & \num{  11.85} & \num{ 451.55} & \num{   9.87} & \num{   7.22} & \num{   4.50}\\
    10 & \num{7200.00} & \num{7200.00} & \num{7200.00} & \num{  51.76} & \num{7200.00} & \num{  86.73} & \num{  54.52} & \num{  17.89}\\
    11 & \num{7200.00} & \num{7200.00} & \num{7200.00} & \num{ 175.53} & \num{7200.00} & \num{ 355.42} & \num{ 126.20} & \num{  29.58}\\
    12 & \num{7200.00} & \num{7200.00} & \num{7200.00} & \num{ 448.92} & \num{7200.00} & \num{1408.36} & \num{1578.97} & \num{ 380.60}\\
    13 & \num{7200.00} & \num{7200.00} & \num{7200.00} & \num{1928.51} & \num{7200.00} & \num{3137.21} & \num{2976.38} & \num{ 116.21}\\
    14 & \num{7200.00} & \num{7200.00} & \num{7200.00} & \num{5501.44} & \num{7200.00} & \num{7200.00} & \num{7200.00} & \num{ 849.15}\\
    \midrule
    \multicolumn{9}{@{}l}{primal-dual integral:}\\
     3 & \num{      2} & \num{      3} & \num{      7} & \num{      1} & \num{      6} & \num{      2} & \num{      2} & \num{      3}\\
     4 & \num{      5} & \num{     14} & \num{     16} & \num{      8} & \num{      9} & \num{     11} & \num{      9} & \num{     10}\\
     5 & \num{      6} & \num{     30} & \num{     24} & \num{      9} & \num{     31} & \num{     29} & \num{     14} & \num{     14}\\
     6 & \num{      9} & \num{     42} & \num{     45} & \num{     20} & \num{     25} & \num{     47} & \num{     33} & \num{     17}\\
     7 & \num{  38404} & \num{  21079} & \num{   5254} & \num{     99} & \num{    209} & \num{    159} & \num{     54} & \num{     74}\\
     8 & \num{ 245267} & \num{ 213439} & \num{ 101851} & \num{    264} & \num{   1873} & \num{    651} & \num{    149} & \num{    201}\\
     9 & \num{ 313339} & \num{ 306493} & \num{ 286624} & \num{    342} & \num{   1978} & \num{    193} & \num{    159} & \num{    205}\\
    10 & \num{ 392407} & \num{ 414595} & \num{ 404572} & \num{   1598} & \num{  94434} & \num{   1503} & \num{   1095} & \num{    406}\\
    11 & \num{ 491427} & \num{ 491463} & \num{ 484365} & \num{   4463} & \num{ 104650} & \num{   5516} & \num{   1994} & \num{    682}\\
    12 & \num{ 527099} & \num{ 527103} & \num{ 524806} & \num{   9572} & \num{ 239255} & \num{  13014} & \num{  18173} & \num{   5704}\\
    13 & \num{ 555106} & \num{ 555107} & \num{ 555094} & \num{  45741} & \num{ 152492} & \num{  34012} & \num{  29414} & \num{   1369}\\
    14 & \num{ 577444} & \num{ 578765} & \num{ 577441} & \num{ 130808} & \num{ 177820} & \num{ 328974} & \num{ 121905} & \num{  10812}\\
    \bottomrule
  \end{tabular*}
\end{table}
% generated by evaluate_running_times_nonlinear.py
\begin{table}
  \caption{Running times and primal-dual integrals for kissing test set and dimension 3.}
  \label{tab:detailskissing3}
  \centering
  \scriptsize
  \begin{tabular*}{\textwidth}{@{}l@{\;\;\extracolsep{\fill}}rrrrrrrr@{}}
    \toprule
    & \multicolumn{8}{c}{setting}\\
    \cmidrule{2-9}
    \# items & sym0 & sym1 & sym2 & sym3 & sym4 & sym5 & sym6 & auto.\\
    \midrule
    \multicolumn{9}{@{}l}{running time in seconds:}\\
     3 & \num{   0.02} & \num{   0.02} & \num{   0.03} & \num{   0.02} & \num{   0.05} & \num{   0.02} & \num{   0.01} & \num{   0.02}\\
     4 & \num{   0.02} & \num{   0.03} & \num{   0.04} & \num{   0.03} & \num{   0.05} & \num{   0.03} & \num{   0.02} & \num{   0.02}\\
     5 & \num{   0.04} & \num{   0.04} & \num{   0.04} & \num{   0.01} & \num{   0.07} & \num{   0.02} & \num{   0.02} & \num{   0.02}\\
     6 & \num{   0.04} & \num{   0.03} & \num{   0.03} & \num{   0.03} & \num{   0.07} & \num{   0.02} & \num{   0.03} & \num{   0.02}\\
     7 & \num{   0.05} & \num{   0.05} & \num{   0.06} & \num{   0.04} & \num{   0.08} & \num{   0.04} & \num{   0.03} & \num{   0.04}\\
     8 & \num{   0.04} & \num{   0.04} & \num{   0.07} & \num{   0.03} & \num{   0.06} & \num{   0.03} & \num{   0.05} & \num{   0.08}\\
     9 & \num{   0.07} & \num{   0.08} & \num{   0.07} & \num{   0.04} & \num{   0.08} & \num{   1.37} & \num{   0.07} & \num{   0.03}\\
    10 & \num{   0.09} & \num{   0.09} & \num{   0.08} & \num{   0.05} & \num{   0.13} & \num{   0.07} & \num{   0.73} & \num{   0.05}\\
    11 & \num{   0.10} & \num{   0.13} & \num{   0.54} & \num{   0.07} & \num{   0.09} & \num{   1.97} & \num{   0.12} & \num{   0.92}\\
    12 & \num{   0.10} & \num{   1.09} & \num{   0.14} & \num{   0.06} & \num{   0.15} & \num{   2.08} & \num{   0.20} & \num{   0.07}\\
    13 & \num{7200.00} & \num{7200.00} & \num{7200.00} & \num{7200.00} & \num{7200.00} & \num{7200.00} & \num{7200.00} & \num{7200.00}\\
    14 & \num{7200.00} & \num{7200.00} & \num{7200.00} & \num{7200.00} & \num{7200.00} & \num{7200.00} & \num{7200.00} & \num{7200.00}\\
    \midrule
    \multicolumn{9}{@{}l}{primal-dual integral:}\\
     3 & \num{      2} & \num{      2} & \num{      3} & \num{      2} & \num{      5} & \num{      2} & \num{      1} & \num{      2}\\
     4 & \num{      2} & \num{      3} & \num{      4} & \num{      3} & \num{      5} & \num{      3} & \num{      2} & \num{      2}\\
     5 & \num{      4} & \num{      4} & \num{      4} & \num{      1} & \num{      7} & \num{      2} & \num{      2} & \num{      2}\\
     6 & \num{      4} & \num{      3} & \num{      3} & \num{      3} & \num{      7} & \num{      2} & \num{      3} & \num{      2}\\
     7 & \num{      5} & \num{      4} & \num{      6} & \num{      4} & \num{      8} & \num{      4} & \num{      3} & \num{      4}\\
     8 & \num{      4} & \num{      4} & \num{      7} & \num{      3} & \num{      6} & \num{      3} & \num{      5} & \num{      8}\\
     9 & \num{      7} & \num{      8} & \num{      7} & \num{      4} & \num{      8} & \num{    137} & \num{      7} & \num{      3}\\
    10 & \num{      9} & \num{      9} & \num{      8} & \num{      5} & \num{     13} & \num{      7} & \num{     18} & \num{      5}\\
    11 & \num{     10} & \num{     13} & \num{     54} & \num{      7} & \num{      9} & \num{    197} & \num{      9} & \num{     33}\\
    12 & \num{     10} & \num{     22} & \num{     14} & \num{      6} & \num{     15} & \num{    193} & \num{     11} & \num{      7}\\
    13 & \num{  61672} & \num{  61411} & \num{  61412} & \num{  61568} & \num{  61612} & \num{  61583} & \num{  62150} & \num{  62493}\\
    14 & \num{  92105} & \num{  92101} & \num{  92102} & \num{  93511} & \num{  92106} & \num{  92116} & \num{  93969} & \num{  92138}\\
    \bottomrule
  \end{tabular*}
\end{table}

% generated by evaluate_running_times_nonlinear.py
\begin{table}
  \caption{Running times and primal-dual integrals for energy test set and dimension 2.}
  \label{tab:detailsenergy2}
  \centering
  \scriptsize
  \begin{tabular*}{\textwidth}{@{}l@{\;\;\extracolsep{\fill}}rrrrrrrr@{}}
    \toprule
    & \multicolumn{8}{c}{setting}\\
    \cmidrule{2-9}
    \# items & sym0 & sym1 & sym2 & sym3 & sym4 & sym5 & sym6 & auto.\\
    \midrule
    \multicolumn{9}{@{}l}{running time in seconds:}\\
     3 & \num{   2.17} & \num{   0.96} & \num{   0.37} & \num{   0.14} & \num{   0.31} & \num{   0.30} & \num{   0.20} & \num{   0.14}\\
     4 & \num{  28.92} & \num{  14.75} & \num{   4.69} & \num{   0.90} & \num{   3.64} & \num{   1.87} & \num{   1.36} & \num{   1.35}\\
     5 & \num{ 637.64} & \num{ 330.00} & \num{  80.48} & \num{   3.96} & \num{   6.50} & \num{   3.74} & \num{   2.06} & \num{   2.00}\\
     6 & \num{7200.00} & \num{7200.00} & \num{7200.00} & \num{  20.58} & \num{ 169.95} & \num{  36.29} & \num{  19.36} & \num{  19.48}\\
     7 & \num{7200.00} & \num{7200.00} & \num{7200.00} & \num{ 131.63} & \num{1669.89} & \num{ 171.30} & \num{  95.21} & \num{  80.70}\\
     8 & \num{7200.00} & \num{7200.00} & \num{7200.00} & \num{ 733.38} & \num{7200.00} & \num{2358.56} & \num{7200.00} & \num{7200.00}\\
     9 & \num{7200.00} & \num{7200.00} & \num{7200.00} & \num{7200.00} & \num{7200.00} & \num{3555.87} & \num{3074.32} & \num{1258.01}\\
    10 & \num{7200.00} & \num{7200.00} & \num{7200.00} & \num{7200.00} & \num{7200.00} & \num{7200.00} & \num{7200.00} & \num{7200.00}\\
    11 & \num{7200.00} & \num{7200.00} & \num{7200.00} & \num{7200.00} & \num{7200.00} & \num{7200.00} & \num{7200.00} & \num{7200.00}\\
    12 & \num{7200.00} & \num{7200.00} & \num{7200.00} & \num{7200.00} & \num{7200.00} & \num{7200.00} & \num{7200.00} & \num{7200.00}\\
    13 & \num{7200.00} & \num{7200.00} & \num{7200.00} & \num{7200.00} & \num{7200.00} & \num{7200.00} & \num{7200.00} & \num{7200.00}\\
    14 & \num{7200.00} & \num{7200.00} & \num{7200.00} & \num{7200.00} & \num{7200.00} & \num{7200.00} & \num{7200.00} & \num{7200.00}\\
    \midrule
    \multicolumn{9}{@{}l}{primal-dual integral:}\\
     3 & \num{      6} & \num{      4} & \num{      3} & \num{      3} & \num{      2} & \num{      3} & \num{      5} & \num{      3}\\
     4 & \num{     59} & \num{     30} & \num{     22} & \num{      5} & \num{     14} & \num{     12} & \num{     11} & \num{     11}\\
     5 & \num{   1048} & \num{    733} & \num{    175} & \num{     38} & \num{     25} & \num{     21} & \num{     17} & \num{     13}\\
     6 & \num{  27577} & \num{  16940} & \num{  15275} & \num{    105} & \num{    339} & \num{     95} & \num{     67} & \num{     60}\\
     7 & \num{ 103265} & \num{  91918} & \num{  71028} & \num{    558} & \num{   3115} & \num{    460} & \num{    300} & \num{    221}\\
     8 & \num{ 165382} & \num{ 159741} & \num{ 138685} & \num{   3004} & \num{  61183} & \num{   5533} & \num{   8552} & \num{   4083}\\
     9 & \num{ 209730} & \num{ 204553} & \num{ 183465} & \num{  13291} & \num{  78012} & \num{   9411} & \num{   7622} & \num{   3704}\\
    10 & \num{ 248606} & \num{ 244166} & \num{ 225948} & \num{  56374} & \num{ 138046} & \num{  64303} & \num{  60746} & \num{  38110}\\
    11 & \num{ 268921} & \num{ 265619} & \num{ 246456} & \num{ 101186} & \num{ 161925} & \num{  98961} & \num{  98287} & \num{  76904}\\
    12 & \num{ 284270} & \num{ 286673} & \num{ 276693} & \num{ 153360} & \num{ 201235} & \num{ 146922} & \num{ 145253} & \num{ 126222}\\
    13 & \num{ 303207} & \num{ 301486} & \num{ 290631} & \num{ 178830} & \num{ 210680} & \num{ 163230} & \num{ 162411} & \num{ 123375}\\
    14 & \num{ 313677} & \num{ 311820} & \num{ 308313} & \num{ 214843} & \num{ 243370} & \num{ 206209} & \num{ 200767} & \num{ 165437}\\
    \bottomrule
  \end{tabular*}
\end{table}
% generated by evaluate_running_times_nonlinear.py
\begin{table}
  \caption{Running times and primal-dual integrals for energy test set and dimension 3.}
  \label{tab:detailsenergy3}
  \centering
  \scriptsize
  \begin{tabular*}{\textwidth}{@{}l@{\;\;\extracolsep{\fill}}rrrrrrrr@{}}
    \toprule
    & \multicolumn{8}{c}{setting}\\
    \cmidrule{2-9}
    \# items & sym0 & sym1 & sym2 & sym3 & sym4 & sym5 & sym6 & auto.\\
    \midrule
    \multicolumn{9}{@{}l}{running time in seconds:}\\
     3 & \num{7200.00} & \num{1904.11} & \num{ 179.29} & \num{  73.14} & \num{ 358.43} & \num{ 357.69} & \num{  24.35} & \num{ 772.85}\\
     4 & \num{7200.00} & \num{7200.00} & \num{7200.00} & \num{1889.64} & \num{7200.00} & \num{7200.00} & \num{1448.27} & \num{1602.78}\\
     5 & \num{7200.00} & \num{7200.00} & \num{7200.00} & \num{7200.00} & \num{7200.00} & \num{7200.00} & \num{7200.00} & \num{7200.00}\\
     6 & \num{7200.00} & \num{7200.00} & \num{7200.00} & \num{7200.00} & \num{7200.00} & \num{7200.00} & \num{7200.00} & \num{7200.00}\\
     7 & \num{7200.00} & \num{7200.00} & \num{7200.00} & \num{7200.00} & \num{7200.00} & \num{7200.00} & \num{7200.00} & \num{7200.00}\\
     8 & \num{7200.00} & \num{7200.00} & \num{7200.00} & \num{7200.00} & \num{7200.00} & \num{7200.00} & \num{7200.00} & \num{7200.00}\\
     9 & \num{7200.00} & \num{7200.00} & \num{7200.00} & \num{7200.00} & \num{7200.00} & \num{7200.00} & \num{7200.00} & \num{7200.00}\\
    10 & \num{7200.00} & \num{7200.00} & \num{7200.00} & \num{7200.00} & \num{7200.00} & \num{7200.00} & \num{7200.00} & \num{7200.00}\\
    11 & \num{7200.00} & \num{7200.00} & \num{7200.00} & \num{7200.00} & \num{7200.00} & \num{7200.00} & \num{7200.00} & \num{7200.00}\\
    12 & \num{7200.00} & \num{7200.00} & \num{7200.00} & \num{7200.00} & \num{7200.00} & \num{7200.00} & \num{7200.00} & \num{7200.00}\\
    13 & \num{7200.00} & \num{7200.00} & \num{7200.00} & \num{7200.00} & \num{7200.00} & \num{7200.00} & \num{7200.00} & \num{7200.00}\\
    14 & \num{7200.00} & \num{7200.00} & \num{7200.00} & \num{7200.00} & \num{7200.00} & \num{7200.00} & \num{7200.00} & \num{7200.00}\\
    \midrule
    \multicolumn{9}{@{}l}{primal-dual integral:}\\
     3 & \num{   1528} & \num{    447} & \num{     44} & \num{     23} & \num{     80} & \num{     80} & \num{     11} & \num{    186}\\
     4 & \num{  14737} & \num{   6781} & \num{   2051} & \num{    491} & \num{   2658} & \num{   1855} & \num{    385} & \num{    408}\\
     5 & \num{  77151} & \num{  59093} & \num{  36746} & \num{  12087} & \num{  16408} & \num{  12026} & \num{   5531} & \num{   5184}\\
     6 & \num{ 121289} & \num{ 104487} & \num{  84450} & \num{  41473} & \num{  61835} & \num{  46654} & \num{  31565} & \num{  31476}\\
     7 & \num{ 157158} & \num{ 143920} & \num{ 126967} & \num{  86664} & \num{ 100511} & \num{  88191} & \num{  72522} & \num{  71813}\\
     8 & \num{ 178762} & \num{ 168522} & \num{ 156057} & \num{ 120629} & \num{ 133847} & \num{ 120473} & \num{ 109227} & \num{ 108159}\\
     9 & \num{ 195888} & \num{ 186304} & \num{ 172755} & \num{ 146157} & \num{ 134636} & \num{ 128682} & \num{ 116954} & \num{ 115462}\\
    10 & \num{ 208562} & \num{ 199569} & \num{ 187568} & \num{ 164420} & \num{ 158820} & \num{ 151279} & \num{ 141834} & \num{ 140935}\\
    11 & \num{ 219253} & \num{ 213879} & \num{ 201490} & \num{ 185224} & \num{ 174839} & \num{ 168161} & \num{ 159821} & \num{ 159343}\\
    12 & \num{ 225733} & \num{ 223219} & \num{ 210340} & \num{ 192819} & \num{ 188728} & \num{ 182468} & \num{ 175968} & \num{ 174244}\\
    13 & \num{ 233884} & \num{ 229828} & \num{ 221233} & \num{ 205331} & \num{ 197518} & \num{ 190162} & \num{ 185491} & \num{ 184832}\\
    14 & \num{ 240182} & \num{ 238349} & \num{ 227048} & \num{ 216176} & \num{ 209512} & \num{ 202497} & \num{ 198844} & \num{ 198483}\\
    \bottomrule
  \end{tabular*}
\end{table}
\clearpage

%-----------------------------------------

\end{document}